\documentclass[11pt]{article}

\usepackage[a4paper,margin=1in]{geometry}
\usepackage{amsmath,amssymb,amsthm,mathtools}
\usepackage{booktabs}
\usepackage{enumitem}
\usepackage{microtype}
\usepackage{xcolor}
\usepackage[colorlinks=true,linkcolor=blue,citecolor=blue,urlcolor=blue]{hyperref}
\numberwithin{equation}{section}
\newtheorem{theorem}{Theorem}[section]
\newtheorem{proposition}[theorem]{Proposition}
\newtheorem{lemma}[theorem]{Lemma}
\newtheorem{corollary}[theorem]{Corollary}
\newtheorem{remark}[theorem]{Remark}

\newcommand{\T}{\mathbb T}
\newcommand{\R}{\mathbb R}
\newcommand{\F}{\mathcal F}
\newcommand{\U}{\mathcal U}
\newcommand{\qc}{q_{\mathrm c}}
\newcommand{\ac}{\alpha_{\mathrm c}}
\newcommand{\gp}{\gamma'}
\newcommand{\dpd}{d_{\mathrm p}}
\newcommand{\Energy}{\mathcal E}
\newcommand{\dd}{\,\mathrm d}

\title{Maximal Regularity and Existence for Superquadratic Parabolic\\
Hamilton--Jacobi Equations: The Endpoint case}
\author{Fanze Kong \thanks{Department of Applied Mathematics, University of Washington, Seattle, WA 98195, USA; {\sf Email: fzkong@uw.edu}}~ and Xiaoyu Zeng  \thanks{Center for Mathematical Sciences, Wuhan University of Technology, Wuhan 430070, China; xyzeng@whut.edu.cn}}
\date{}

\begin{document}
\maketitle

\begin{abstract}

We establish interior maximal $L^{\qc}$-regularity for bounded
strong solutions of
\[
u_t-\Delta u+|Du|^\gamma=f
\quad\text{in }\T^d\times(0,T),
\]
where $d\ge2$, $\gamma>2$ and
$\qc=(d+2)(\gamma-1)/\gamma$.
The estimates are uniform for uniformly bounded families of
solutions whose source terms range over a bounded, uniformly
equi-integrable subset of $L^{\qc}$. The main difficulty is the possible
concentration of the critical gradient energy. We overcome it
through a two-scale blow-up argument: the first scale produces
a small Hamiltonian coefficient, while energy normalization at
the second scale yields strong endpoint compactness, allowing
a parabolic Liouville theorem to rule out concentration.
The resulting uniform little-H\"older estimate, combined with
critical Gagliardo--Nirenberg interpolation, permits absorption
of the nonlinear term in the parabolic Calder\'on--Zygmund estimate.  As an application, we construct bounded strong solutions for
$f\in L^{\qc}$ and continuous initial data $u_0$, and prove
subsequential strong interior convergence of smooth approximations
in $W^{2,1}_{\qc}$.
 
%We establish the interior maximal \(L^{\qc}\)-regularity for bounded strong
%solutions of
%\[
% u_t-\Delta u+|Du|^\gamma=f
% \quad\text{in }\T^d\times(0,T),
%\]
%where $d\geq 2$, $\gamma>2$ and $\qc=(d+2)(\gamma-1)/\gamma,$ provided that the source terms $f$ range over a bounded, uniformly equi-integrable subset of $L^{\qc}$.  The obstruction to endpoint regularity is the possible concentration of the critical gradient energy.  Using the equi-integrability of the source terms, we establish the uniform little-H\"older modulus through a two-scale blow-up argument
%that rules out concentration of the critical gradient energy. Under the first blow-up, the Hamiltonian term has a small coefficient. If energy concentrates, a second blow-up yields strong endpoint compactness and a nontrivial entire limit, which finally is excluded by a parabolic Liouville theorem.     Combining the resulting uniform little-H\"older modulus with a critical Gagliardo--Nirenberg inequality, we absorb the nonlinear term in the parabolic Calder\'on--Zygmund estimate and prove the maximal regularity of Hamilton--Jacobi equations.   As an application of the maximal regularity result, we establish strong stability at the critical exponent and obtain strong solutions to Hamilton--Jacobi equations.
 
\end{abstract}

\section{Introduction}
The regularity theory of Hamilton--Jacobi equations has a long history.
The study of viscosity solutions to Hamilton--Jacobi equations  was developed in
\cite{CrandallLions1983}, while Bernstein-type arguments for viscous
Hamilton--Jacobi equations can be traced back to \cite{Lions1985}.  H\"older estimates
for coercive or superquadratic Hamiltonians were obtained in
\cite{CannarsaCardaliaguet2010,CapuzzoDolcettaLeoniPorretta2010,
CardaliaguetSilvestre2012,DallAglioPorretta2015} and  Sobolev regularity for
first-order equations was studied in \cite{CardaliaguetPorrettaTonon2015}.  We also mention that adjoint and compensated-compactness methods for
Hamilton--Jacobi equations were developed by Evans \cite{Evans2010}.

For stationary viscous Hamilton--Jacobi equations, the maximal-regularity
problem was raised by P.-L. Lions.  In the periodic setting, Cirant and
Goffi \cite{CirantGoffi2021} proved maximal $L^q$-regularity above the
scaling-critical exponent.  Local superquadratic estimates based on
blow-up and Liouville methods were established by Cirant and Verzini
\cite{CirantVerzini2022}.  Endpoint and maximal regularity phenomena were
further investigated in
\cite{Goffi2023,CirantGoffiLeonori2025,GoffiPediconi2023}, while related
subquadratic regimes and Morrey-space mechanisms arise in
\cite{CirantKongWeiZeng2025}.  For fully nonlinear equations with natural
gradient growth, we also refer to \cite{Goffi2025}.  A recent overview of these developments and the related open problems is
provided in \cite{GoffiSurvey2026}.

The parabolic theory developed along a different route.  Interior H\"older
estimates for equations with superquadratic Hamiltonians and unbounded
 source terms were proved in
\cite{CannarsaCardaliaguet2010,CardaliaguetSilvestre2012}.  Lipschitz
estimates with $L^p$ data were obtained by Cirant and Goffi
\cite{CirantGoffiLipschitz2020}.  The same authors established parabolic
maximal-regularity estimates in \cite{CirantGoffiParabolic2021}.  Cirant
\cite{Cirant2025} subsequently covered the full superquadratic range
strictly above the critical exponent by combining sharp H\"older estimates,
an improvement-of-oscillation argument and a parabolic Liouville theorem.
Related nonlocal and natural-growth problems were considered in
\cite{GoffiNonlocal2021,Goffi2025}. 

Maximal regularity estimates for Hamilton--Jacobi equations have numerous
applications and, in particular, provide important compactness tools for
second-order mean-field game systems, whose mathematical theory was
introduced by Lasry and Lions \cite{LasryLions2007}.  For regularity theory
and variational approaches to mean-field games, we refer, among others, to
\cite{GomesPimentelVoskanyan2016,CirantTonon2019,Munoz2022,
MeszarosSilva2018,CesaroniCirant2019,CirantKongWeiZeng2025,
KongTongZeng2026JDE,KongTongZengZhou2026,
KongTongZeng2026MountainPass}.

We consider the following parabolic Hamilton--Jacobi equation 
\begin{align}\label{eq:main-equation-pub}
  u_t-\Delta u+|Du|^\gamma=f
  \quad\text{a.e. in }\mathbb T^d\times(0,T).
\end{align}
The natural integrability threshold for parabolic maximal
regularity of \eqref{eq:main-equation-pub} arising from the scaling balance is
\begin{align}\label{eq:critical-exponents-pub1}
  q_c:=\frac{d+2}{\gamma'},
  \qquad \gamma':=\frac{\gamma}{\gamma-1},
  \qquad \gamma>1.
\end{align}
At the critical exponent $q=q_c$, we set
\begin{align}\label{eq:critical-exponents-pub2}
  \alpha_c:=2-\frac{d+2}{q_c}
  =\frac{\gamma-2}{\gamma-1}.
\end{align}
These exponents satisfy
\begin{align}\label{eq:critical-exponents-pub3}
  \gamma'q_c=d+2,
  \qquad
  2-\alpha_c=\gamma(1-\alpha_c)=\gamma'.
\end{align}
We show that the critical scaling  balances the three terms in the equation
and preserves the $L^{q_c}$-norm of the source. Indeed, for $z_0=(x_0,t_0)$, define the rescaled  function $u_r(x,t):=r^{-\alpha_c}u(x_0+rx,t_0+r^2t)$.  Under this rescaling, the time derivative, the Laplacian  and the
Hamiltonian term scale with the same power of \(r\).  Moreover, the  rescaled source is $f_{r,z_0}(x,t)
 :=r^{\gamma'}f(x_0+rx,t_0+r^2t) $
whose $L^{q_c}$-norm is invariant since $\gamma'q_c=d+2$. At $q=q_c$, Cirant \cite{Cirant2025} proved the critical parabolic
H\"older estimate in $C^{\alpha_c,\alpha_c/2}$ under uniform equi-integrability of the source terms.
However, maximal regularity in that work requires $q>q_c$.  The endpoint
obstruction can be seen from the critical embedding $W^{2,1}_{q_c}\hookrightarrow W^{1,0}_{\gamma q_c},$
which is continuous but not compact.  Thus weak convergence of a rescaled
sequence does not identify the limit of $|Du_n|^\gamma$.  Equivalently, the
endpoint Gagliardo--Nirenberg inequality produces a term linear in the
$W^{2,1}_{q_c}$ norm, and a merely bounded critical H\"older seminorm does
not provide the small coefficient required for absorption.

To address the endpoint compactness difficulty described above,
we impose a uniform equi-integrability condition on the source terms.
More precisely, let $Q_T:=\T^d\times(0,T)$ and assume that the
admissible source terms form a bounded and uniformly equi-integrable
set $\mathcal F\subset L^{\qc}(Q_T)$; that is,
\[
\sup_{f\in\mathcal F}\|f\|_{L^{\qc}(Q_T)}<\infty
\]
and
\begin{equation}\label{vanishing-omega}
\omega_{\mathcal F}(s)
:=
\sup_{f\in\mathcal F}
\sup_{\substack{E\subset Q_T\\ |E|\le s}}
\int_E |f|^{\qc}\,dx\,dt
\rightarrow0
\qquad\text{as }s\downarrow0.
\end{equation}
For every fixed parabolic cylinder $Q_R$, the identity
$\gamma'\qc=d+2$ and a change of variables give
\[
\|f_{r,z_0}\|_{L^{\qc}(Q_R)}^{\qc}
=
\int_{z_0+\delta_rQ_R}|f(x,t)|^{\qc}\,dx\,dt,
\qquad
\delta_r(x,t):=(rx,r^2t),
\]
whenever $z_0+\delta_rQ_R\subset Q_T$.
Since $|z_0+\delta_rQ_R|=r^{d+2}|Q_R|$, it follows that
\[
\sup_{f\in\mathcal F}
\sup_{\substack{z_0\in Q_T\\ z_0+\delta_rQ_R\subset Q_T}}
\|f_{r,z_0}\|_{L^{\qc}(Q_R)}
\le
\omega_{\mathcal F}\bigl(r^{d+2}|Q_R|\bigr)^{1/\qc}
\rightarrow0
\qquad\text{as }r\downarrow0.
\]
Consequently, for every sequence $r_n\downarrow0$,
$z_n=(x_n,t_n)\in Q_T$, and $f_n\in\mathcal F$, we have
\[
r_n^{\gamma'}
f_n(x_n+r_nx,t_n+r_n^2t)
\rightarrow0
\quad\text{strongly in }L^{\qc}(Q_R),
\]
provided that $z_n+\delta_{r_n}Q_R\subset Q_T$ for all sufficiently
large $n$. This uniform vanishing of the rescaled source terms
is a key ingredient in the proof of our uniform little-H\"older
estimate.

We prove the uniform little-H\"older estimate through a two-scale
blow-up argument and parabolic Liouville rigidity. More precisely,
we adapt the elliptic endpoint argument of~\cite{FanzeKong} to the
parabolic setting, using the critical H\"older estimate and
parabolic Liouville theorem of~\cite{Cirant2025}.
For related blow-up and Liouville methods in elliptic and parabolic
equations, see, among others,
\cite{PeletierSerrin1978,PolacikQuittnerSouplet2007,SoupletZhang2006,
Souplet2023Universal,CirantVerzini2022,Cirant2025}.
The linear parabolic regularity estimates and interpolation
inequalities used below are based on the classical theories
developed in
\cite{LadyzhenskayaSolonnikovUraltseva1968,Lieberman1996,Nirenberg1959,
Nirenberg1966,Triebel1983,Triebel1995}.

Cirant and Goffi~\cite{CirantGoffiParabolic2021} established endpoint
maximal regularity in the subquadratic case, while the superquadratic
result of Cirant~\cite{Cirant2025} requires $q>\qc$.
The following theorem establishes interior maximal regularity
for superquadratic parabolic Hamilton--Jacobi equations at the
critical exponent $q=\qc$.
\begin{theorem}
\label{thm:main-pub}
Suppose that \(d\ge2\), \(\gamma>2\) and 
\(\F\subset L^{\qc}(Q_T)\) is bounded and uniformly equi-integrable in the sense of \eqref{vanishing-omega}.
Let $\U\subset W^{2,1}_{\qc}(Q_T)$ defined in \eqref{W21p-parabolic} be a family of strong
solutions such that each $u\in\U$ satisfies
\eqref{eq:main-equation-pub} with a source $f_u\in\F$.
%\begin{equation}\label{eq:main-equation-pub}
% u\in W^{2,1}_{\qc}(Q_T),
% \qquad
% u_t-\Delta u+|Du|^\gamma=f_u
% \quad\text{a.e. in }Q_T.
%\end{equation}
Assume  that
\begin{equation}\label{eq:uniform-Linfty-pub}
 \sup_{u\in\U}\|u\|_{L^\infty(Q_T)}\le K.
\end{equation}
Then, for every \(0<\tau<T/2\), there exists a positive constant $C_\tau=C_\tau\!\left(d,\gamma,T,\tau,K,
\sup_{f\in\F}\|f\|_{L^{\qc}(Q_T)},\omega_{\F}\right)$
such that
\begin{equation}\label{eq:main-estimate-pub}
 \sup_{u\in\U}
 \left(
 \|u_t\|_{L^{\qc}(Q_\tau)}
 +\|D^2u\|_{L^{\qc}(Q_\tau)}
 +\bigl\||Du|^\gamma\bigr\|_{L^{\qc}(Q_\tau)}
 \right)
 \le C_\tau,
\end{equation}
where $Q_\tau:=\T^d\times(\tau,T-\tau)$.
 
\end{theorem}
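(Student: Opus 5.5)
The plan has three stages: a uniform little-H\"older modulus at the critical exponent, a localized Calder\'on--Zygmund estimate in which the nonlinear term is absorbed, and a covering of $Q_\tau$.

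\textbf{Step 1: uniform vanishing of the critical oscillation.} I would first prove a uniform little-H\"older estimate. For every $\varepsilon>0$ there should be $r_\varepsilon>0$, depending only on the data listed in the theorem, such that
\[
\sup_{u\in\U}\ \sup_{z_0}\ \sup_{r<r_\varepsilon}\ r^{-\ac}\operatorname*{osc}_{z_0+\delta_rQ_1}u\le\varepsilon,
\]
where $z_0$ ranges over points with $z_0+\delta_rQ_1\subset \T^d\times(\tau/2,T-\tau/2)$.

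The boundedness of the critical seminorm $[u]_{C^{\ac,\ac/2}}$ is Cirant's critical H\"older estimate. It holds uniformly because $\F$ is bounded and equi-integrable and $\|u\|_\infty\le K$. The point is to upgrade this to a vanishing modulus. I would argue by contradiction. Suppose there are $u_n$, $z_n$ and $r_n\downarrow0$ with $r_n^{-\ac}\operatorname{osc}\ge\varepsilon_0$. Consider the rescalings $v_n=r_n^{-\ac}(u_n(z_n+\delta_{r_n}\cdot)-u_n(z_n))$. These solve the same equation with sources $f_{r_n,z_n}$, which tend to $0$ in $L^{\qc}_{loc}$ by \eqref{vanishing-omega}, as computed in the introduction. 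The $v_n$ are locally bounded in $C^{\ac,\ac/2}$ with nontrivial oscillation.

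The difficulty is passing to the limit in $|Dv_n|^\gamma$, because the embedding $W^{2,1}_{\qc}\hookrightarrow W^{1,0}_{\gamma\qc}$ is not compact. Following the two-scale scheme, I would do the following.
\begin{enumerate}[label=(\roman*)]
\item Choose the first scale so that, after normalizing, the Hamiltonian appears with a small coefficient $\lambda_n\to0$. This is done by rescaling at a scale where the critical seminorm is nearly maximized relative to a slightly smaller exponent, giving equations $w_t-\Delta w+\lambda_n|Dw|^\gamma=g_n$.
\item If the critical gradient energy $\int|Dw_n|^{\gamma\qc}$ does not concentrate, then local $W^{2,1}_{\qc}$ bounds with a small nonlinear coefficient give strong convergence. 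The limit is a caloric or Hamilton--Jacobi entire solution with $C^{\ac}$ growth and nontrivial oscillation, contradicting the parabolic Liouville theorem of \cite{Cirant2025}.
\item If energy concentrates at some point, blow up a second time at the concentration scale, normalizing the energy in a unit cylinder to $1$. The small coefficient $\lambda_n$ together with vanishing sources makes the equation asymptotically linear. The linear $W^{2,1}_{\qc}$ estimates then yield strong $L^{\gamma\qc}$ compactness of the gradients, so the energy passes to the limit. This produces a nonconstant entire solution with finite critical energy, again excluded by Liouville.
\end{enumerate}
This step is the main obstacle. The delicate parts are choosing the scales so that the $C^{\ac}$ growth bounds survive both rescalings, and verifying that energy normalization really gives strong convergence.

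\textbf{Step 2: absorption.} Take a cutoff $\eta$ supported in a cylinder $\delta_\rho Q_2$ and equal to $1$ on $\delta_\rho Q_1$, and set $w=\eta(u-\bar u)$. The parabolic Calder\'on--Zygmund estimate gives
\[
\|w\|_{W^{2,1}_{\qc}}\le C\bigl(\|f\|_{L^{\qc}}+\||Du|^\gamma\eta\|_{L^{\qc}}+\text{lower order}\bigr).
\]
The critical Gagliardo--Nirenberg inequality bounds the nonlinear term:
\[
\|D w\|_{L^{\gamma\qc}}^\gamma\le C[w]_{C^{\ac,\ac/2}}^{\gamma-1}\|w\|_{W^{2,1}_{\qc}},
\]
where the exponents match by \eqref{eq:critical-exponents-pub3}. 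By Step 1, $[u-\bar u]_{C^{\ac}}$ on the support is at most $\varepsilon$ once $\rho<r_\varepsilon$. Choosing $\varepsilon$ so that $C\varepsilon^{\gamma-1}<1/2$ lets us absorb the nonlinear term into the left-hand side. The cutoff derivatives produce lower-order terms such as $\rho^{-2}\operatorname{osc}u$ and $\rho^{-1}\|Du\|$. These are controlled by $K$ and by interpolation, with another absorption at intermediate scales handled by a standard iteration lemma.

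\textbf{Step 3: covering.} Cover $Q_\tau$ by finitely many cylinders of size $\rho(\tau,\varepsilon)$; the number of cylinders is independent of $u$. Summing the local bounds gives the estimate for $D^2u$ and $|Du|^\gamma$. The bound on $u_t$ then follows from the equation, since $u_t=f+\Delta u-|Du|^\gamma$. This yields \eqref{eq:main-estimate-pub}.
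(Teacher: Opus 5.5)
Your Steps 2--3 are a workable variant of the paper's absorption. The paper avoids spatial cutoffs: it applies the slice-wise interpolation on the whole torus (Lemma~\ref{lem:integrated-GN-pub}) and localizes only in time. In your version, note that the forcing is $\eta|Du|^\gamma$, not $|Dw|^\gamma$, so Gagliardo--Nirenberg must be applied to $u$ on the larger cylinder before iterating.

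The genuine gap is in Step~1(i)--(ii), the mechanism that is supposed to produce the small coefficient. For any normalization $v=(u(x_0+ry,t_0+r^2s)-c)/B$ the coefficient is $\lambda=B^{\gamma-1}r^{2-\gamma}$. Since $\alpha_c(\gamma-1)=\gamma-2$, this gives
\[
\lambda^{1/(\gamma-1)}[v]_{C^{\alpha_c,\alpha_c/2}}=[u]_{C^{\alpha_c,\alpha_c/2}}
\]
on corresponding sets. Because the bad pair forces $[u_n]\ge\delta$, sending $\lambda_n\to0$ makes the critical seminorm of the blow-ups unbounded. Nothing in your selection keeps it bounded on the growing cylinders $Q_L$. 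A selection only controls the quotient at distances $\le1/2$, and that permits linear growth.

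With $\lambda_n\to0$ the limit is caloric. But $v=y_1$ is a nonconstant ancient caloric function with $v(0,0)=0$, $v(e_1,0)=1$, and quotient $\le1$ at distances $\le1/2$. So case (ii) yields no contradiction. Moreover, maximizing the $\alpha$-quotient for $\alpha<\alpha_c$ does not even select small scales, since that quotient is $\le H\,d_p^{\alpha_c-\alpha}$.

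The paper instead fixes $\varepsilon<\delta$ and uses the stopping-time selection of Lemma~\ref{lem:bad-scale-selection-pub}. It finds a pair at distance $r_n\to0$ with quotient $\ge\varepsilon$, while every pair in $Q_{nr_n}(z_n)$ at distance $\le r_n/2$ has quotient $<\varepsilon$. Dividing by $\varepsilon r_n^{\alpha_c}$ then gives:
\begin{itemize}
\item a \emph{fixed} coefficient $\lambda=\varepsilon^{\gamma-1}$;
\item quotient $\le1$ at distances $\le1/2$;
\item unit oscillation at scale $1$;
\item the global bound $H_{G_2}/\varepsilon$, which is exactly what Lemma~\ref{lem:endpoint-liouville-pub} needs.
\end{itemize}

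Since $\lambda$ does not vanish, you genuinely need strong convergence of $Dv_n$ in $L^{\gamma q_c}_{\mathrm{loc}}$. For that, the local critical-energy bound must not depend on $\varepsilon$. The paper fixes $\lambda_*$ with $C\lambda_*M_0^{1/(d+2)}\le\kappa_{q_c}$ before choosing $\varepsilon$, and then applies the small-drift lemma to $v_n-v_m$.

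Your step (iii) does not supply such a bound. The critical energy is invariant under the critical rescaling, so it cannot be ``normalized to $1$'' by scaling. At the second scale, the smallness also cannot come from the coefficient. In Proposition~\ref{prop:universal-energy-pub}, $(\xi_j,\rho_j)$ is chosen by stopping time so that $\mathcal E(w_j;Q_1)>\eta_0$ while $\mathcal E(w_j;Q_{1/2}(z))\le\eta_0$ on expanding neighborhoods. This yields $\|\lambda A_{j\ell}\|_{L^{d+2}}\lesssim\eta_0^{1/(d+2)}$ for every $\lambda\in[0,1]$.

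The second-scale limit inherits its global $\alpha_c$-bound from the first-scale local quotient bound. That bound is again supplied by the selection above, which your proposal lacks. Alternatively, for a fixed small $\lambda$ the energy bound follows directly by Calder\'on--Zygmund plus Gagliardo--Nirenberg absorption, as the paper remarks.
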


To prove Theorem~\ref{thm:main-pub}, we first establish a uniform
little-H\"older estimate by means of a two-stage blow-up argument and
parabolic Liouville rigidity.  The first blow-up is taken around points
and scales at which this estimate fails, producing a small coefficient
in front of the Hamiltonian.  If the resulting sequence has unbounded
critical energy, we introduce a second, energy-normalized rescaling.  At
the second scale, the linearized drift is small in $L^{d+2}$, which
upgrades weak convergence to strong
$W^{2,1}_{q_c,\mathrm{loc}}$-convergence.  Liouville rigidity then
contradicts the positive normalized energy and yields a universal local
critical-energy bound.  Returning to the first scale, this bound,
together with the small Hamiltonian coefficient, gives strong local
compactness, thereby eliminating the nonlinear defect and proving the
uniform little-H\"older estimate.  Finally, the endpoint
Gagliardo--Nirenberg inequality and the standard
Calder\'on--Zygmund estimate yield the maximal-regularity estimate of \eqref{eq:main-equation-pub}.   This result, together with the underlying blow-up framework, is the
parabolic counterpart of the elliptic theory developed in
\cite{FanzeKong}.  We emphasize the precise role of the second rescaling.  For the 
sequence produced by the first blow-up, the coefficient of the Hamiltonian
is the fixed small number
\(\lambda=\varepsilon^{\gamma-1}\). In this case, the local
parabolic estimate and the endpoint interpolation inequality allow the
nonlinear term to be absorbed into the left-hand side. Proposition
\ref{prop:universal-energy-pub}, however, gives a stronger result: it provides
a uniform local critical-energy bound for every admissible sequence and
every \(0\leq\lambda_n\leq1\), with a constant independent of the coefficient
sequence. Such a coefficient-uniform estimate cannot be obtained by the
preceding absorption argument when \(\lambda_n\) is not small. The second,
energy-normalized blow-up is introduced precisely to rule out critical-energy
concentration in this general setting.

As an application of our maximal-regularity result, we establish
the existence of bounded strong solutions for $f\in L^{\qc}(Q_T)$
and $u_0\in C(\T^d)$. The proof also requires control near the
initial time, since interior estimates alone do not ensure that
the limit solution attains the prescribed initial data. We obtain
a uniform modulus of continuity at $t=0$ by combining the adjoint
identity and the endpoint adjoint estimate
inspired by ~\cite{CirantGoffiParabolic2021} with uniform tail estimates
for the source terms and a distance estimate. Together with
the interior compactness argument, this yields the existence
and strong convergence results stated below.  As an application of Theorem~\ref{thm:main-pub} and
Proposition~\ref{prop:universal-energy-pub}, we obtain the following
existence result for \eqref{eq:main-equation-pub}:
\begin{theorem}
\label{thm:critical-existence-pub}
{Let $d\ge2$ and $\gamma>2$.} Let $f\in L^{\qc}(Q_T)$ and
$u_0\in C(\T^d)$. Then there exists $u\in W^{2,1}_{\qc,\mathrm{loc}}(Q_T)
\cap C(\T^d\times[0,T)){\cap L^\infty(Q_T)}$
which is a strong solution of
\begin{equation}\label{eq:critical-limit-equation-pub}
 \begin{cases}
 u_t-\Delta u+|Du|^\gamma=f&\text{a.e. in }Q_T,\\
 u(\cdot,0)=u_0&\text{on }\T^d.
 \end{cases}
\end{equation}
More precisely, $\lim_{t\downarrow0}
 \|u(\cdot,t)-u_0\|_{L^\infty(\mathbb T^d)}=0.$
\end{theorem}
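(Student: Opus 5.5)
We will prove Theorem~\ref{thm:critical-existence-pub} by approximation, applying Theorem~\ref{thm:main-pub} to a family of smooth solutions. Take $f_n\in C^\infty(\overline{Q_T})$ with $f_n\to f$ in $L^{\qc}(Q_T)$ and $u_{0,n}\in C^\infty(\T^d)$ with $u_{0,n}\to u_0$ uniformly. Since the data are smooth, classical theory gives a unique smooth global solution $u_n$ of the Cauchy problem; global existence holds because the gradient stays bounded for smooth bounded data. The set $\F:=\{f_n\}\cup\{f\}$ is bounded in $L^{\qc}$. It is also uniformly equi-integrable in the sense of \eqref{vanishing-omega}: since $|f_n|^{\qc}\to|f|^{\qc}$ in $L^1$, Vitali's theorem applies to a strongly convergent sequence.

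\textbf{Step 1: uniform $L^\infty$ bound.} We need \eqref{eq:uniform-Linfty-pub} for the $u_n$. An upper bound is straightforward: $u_n$ is a subsolution of $v_t-\Delta v=f_n$, so $u_n\le \|u_{0,n}\|_\infty+w_n$, where $w_n$ solves the heat equation with source $f_n^+$ and zero initial data. Since $\qc>(d+2)/2$ (this uses $\gamma>2$), the heat kernel estimate gives $\|w_n\|_\infty\le C\|f_n\|_{L^{\qc}}$. The lower bound is the delicate direction, because $-|Du|^\gamma$ pushes $u$ down. We control it with the adjoint method. Fix $(x_0,t_0)$ and let $\rho\ge0$ solve the adjoint equation $-\rho_t-\Delta\rho-\operatorname{div}(\rho\, b_n)=0$ with $b_n=\gamma|Du_n|^{\gamma-2}Du_n$ and $\rho(t_0)=\delta_{x_0}$. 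By convexity of $p\mapsto|p|^\gamma$, we have $|Du_n|^\gamma\le \gamma|Du_n|^{\gamma-2}Du_n\cdot Du_n-(\gamma-1)|Du_n|^\gamma$. Testing against $\rho$ then gives
\[
u_n(x_0,t_0)\ge \int u_{0,n}\rho(0)-\int\!\!\int f_n^-\rho+(\gamma-1)\int\!\!\int|Du_n|^\gamma\rho .
\]
The endpoint adjoint estimate of \cite{CirantGoffiParabolic2021} bounds $\|\rho\|_{L^{\qc'}}$ in terms of the kinetic energy $\int\!\!\int|b_n|^{\gamma'}\rho\sim\int\!\!\int|Du_n|^\gamma\rho$, with sublinear growth. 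Young's inequality then absorbs $\int\!\!\int f_n^-\rho$ into the positive energy term. The result is a uniform lower bound, so we obtain \eqref{eq:uniform-Linfty-pub} with $K$ depending only on $\|u_0\|_\infty$ and $\|f\|_{L^{\qc}}$ (and on $\omega_\F$ if needed).

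\textbf{Step 2: interior compactness.} By Theorem~\ref{thm:main-pub}, $u_n$ is bounded in $W^{2,1}_{\qc}(Q_\tau)$ for every $\tau$. We also use the uniform little-H\"older estimate, and hence equicontinuity, coming from Proposition~\ref{prop:universal-energy-pub}. A diagonal subsequence therefore converges locally uniformly and weakly in $W^{2,1}_{\qc,\mathrm{loc}}$, with $Du_n\to Du$ a.e. The limit equation needs $|Du_n|^\gamma\to|Du|^\gamma$ in $L^1_{\mathrm{loc}}$. Uniform integrability of $|Du_n|^{\gamma\qc}$ would give this through Vitali, so we aim to prove it. We expect the no-concentration mechanism of Proposition~\ref{prop:universal-energy-pub} (small critical energy on small cylinders, uniformly in $n$) to supply that equi-integrability. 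Applying the Calder\'on--Zygmund estimate to differences $u_n-u_m$, with the nonlinear term absorbed as in the proof of Theorem~\ref{thm:main-pub}, should moreover give strong convergence in $W^{2,1}_{\qc,\mathrm{loc}}$. This is the subsequential strong convergence announced in the abstract.

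\textbf{Step 3: initial trace (main obstacle).} We need a modulus $\sigma$, independent of $n$, such that $\|u_n(\cdot,t)-u_{0,n}\|_\infty\le\sigma(t)$. For the upper bound, compare $u_n$ with the heat flow of $u_{0,n}$ plus the Duhamel term $\int_0^t e^{(t-s)\Delta}f_n^+$. That term is bounded by $C\,\omega_\F(t\,|\T^d|)^{1/\qc}\cdot t^{\text{power}}$-type tails, which vanish uniformly because of the equi-integrability; this is the uniform tail estimate. The heat flow of $u_{0,n}$ converges to $u_{0,n}$ uniformly, with a modulus set by the common modulus of continuity of the $u_{0,n}$. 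For the lower bound we rerun the adjoint identity of Step 1 on $[0,t]$, which gives $u_n(x_0,t)\ge\int u_{0,n}\rho(0)-o(1)$. We then need $\int u_{0,n}\rho(0)\ge u_{0,n}(x_0)-o(1)$, that is, the measure $\rho(0)$ must concentrate near $x_0$ as $t\to0$. To get this, we would bound the displacement $\int|x-x_0|\rho(0)$ through the kinetic energy: the drift moves mass by at most $t^{1/\gamma}(\text{energy})^{1/\gamma'}$-type quantities, and diffusion contributes $\sqrt t$. The energy stays bounded thanks to the absorption in Step 1. This distance estimate, carried out uniformly in $n$ with only $L^{\qc}$ data, is where we expect the main difficulty. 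Once it holds, the uniform limit gives $u\in C(\T^d\times[0,T))$ with $\|u(\cdot,t)-u_0\|_\infty\to0$.
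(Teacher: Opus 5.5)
Your architecture is the paper's: mollified data, heat comparison plus the adjoint method for the $L^\infty$ bound and the initial trace, and Theorem~\ref{thm:main-pub} for interior bounds. However, the absorption in Step~1 fails as written. At $q=\qc$ the adjoint estimate of \cite{CirantGoffiParabolic2021} is \emph{not} sublinear. The paper uses it in the form $\|\rho\|_{L^{\qc'}(Q_t)}\le C(1+A)$ with $A=\iint|b_n|^{\gp}\rho$. In its proof the H\"older step bounds $\|b\rho\|$ by $A^{1/\gp}\|\rho\|_{L^{s}}^{1/\gamma}$, and $s=\qc'$ exactly at the endpoint, so Young only returns a linear bound; sublinearity holds only for $q>\qc$.

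Consequently $\iint f_n^-\rho\le C\|f_n\|_{L^{\qc}}(1+A)$ can be absorbed into $(\gamma-1)\iint|Du_n|^\gamma\rho=c_\gamma A$ only when $C\|f\|_{L^{\qc}}<c_\gamma$, i.e.\ for small data. The missing idea is truncation. Split $f_n=f_n\mathbf 1_{\{|f_n|\le h\}}+f_n\mathbf 1_{\{|f_n|>h\}}$. The first part contributes at least $-ht$ because each $\rho(\cdot,s)$ is a probability density. Uniform equi-integrability of $\{f_n\}$ lets you pick $h$ with $\sup_n\|f_n\mathbf 1_{\{|f_n|>h\}}\|_{L^{\qc}}\le c_\gamma/(2C)$. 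Then $\iint f_n\rho\ge-hT-\frac{c_\gamma}{2}A-C$, which is absorbed. So equi-integrability is essential rather than ``if needed'', and $K$ depends on the tail modulus of $\{f_n\}$, not only on $\|f\|_{L^{\qc}}$. The same splitting with tail at most $\delta$ is what justifies the unproved ``$-o(1)$'' in Step~3: $\iint_{Q_t}f_n^-\rho\le h(\delta)t+C\delta(1+A)$, with $A$ uniformly bounded by the repaired Step~1 identity.

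With that repair the rest goes through. The displacement bound you single out as the main obstacle is short in the paper. Test the adjoint equation with a mollified distance $d_{x_0,\varepsilon}$ ($1$-Lipschitz, $|\Delta d_{x_0,\varepsilon}|\le C/\varepsilon$) and take $\varepsilon=\sqrt t$. This gives $\int\operatorname{dist}(x_0,y)\rho(y,0)\,dy\le C\sqrt t+t^{1/\gamma}A^{1/\gp}$ uniformly in $n$. The upper Duhamel term needs no equi-integrability, only $Ct^{1-(d+2)/(2\qc)}\|f_n\|_{L^{\qc}}$.

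In Step~2 you ask for more than the theorem needs. Boundedness in $W^{2,1}_{\qc}(Q_\tau)$ gives $Du_n\to Du$ a.e.\ by Aubin--Lions. Also $\{|Du_n|^\gamma\}$ bounded in $L^{\qc}$ with $\qc>1$ is already uniformly integrable in $L^1$. Vitali then gives $|Du_n|^\gamma\to|Du|^\gamma$ in $L^1(Q_\tau)$, and the equation holds a.e. Equi-integrability of $|Du_n|^{\gamma\qc}$ matters only for the strong convergence in the corollary. There, Proposition~\ref{prop:universal-energy-pub} alone gives a bounded, not small, rescaled energy. The paper blows up a concentrating sequence and combines the proposition (with $\lambda_k=1$) with Theorem~\ref{thm:uniform-little-pub}, Lemma~\ref{lem:GN-pub} and the local Calder\'on--Zygmund estimate to show the energy vanishes. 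It then applies Lemma~\ref{lem:small-drift-pub} to $u_n-u_m$. Likewise, equicontinuity comes from Theorem~\ref{thm:critical-holder-pub}, not from Proposition~\ref{prop:universal-energy-pub}.
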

In the proof of Theorem \ref{thm:critical-existence-pub}, as a corollary, we have the following stability result: 
\begin{corollary}
Let $f_n\in {C^\infty(\T^d\times[0,T])}$ satisfy $f_n\rightarrow f$ \text{strongly in }$L^{q_c}(Q_T)$, and let $u_{0,n}:=\rho_{1/n}*u_0$,
where $(\rho_\varepsilon)_{\varepsilon>0}$ is a family of standard
periodic mollifiers. For each $n$, let $u_n$ be the classical solution of
\[
 \begin{cases}
 (u_n)_t-\Delta u_n+|Du_n|^\gamma=f_n
     &\text{in }Q_T,\\
 u_n(\cdot,0)=u_{0,n}
     &\text{on }\mathbb T^d.
 \end{cases}
\]
Then there exist a subsequence, still denoted by $(u_n)$, and a solution
$u$ provided by Theorem~\ref{thm:critical-existence-pub} such that, for
every $0<\tau<T/2$,
\begin{equation*}%\label{eq:critical-strong-stability-pub}
 \begin{aligned}
 &\|u_n-u\|_{W^{2,1}_{q_c}(Q_\tau)}
 +\|Du_n-Du\|_{L^{\gamma q_c}(Q_\tau)}
 +\bigl\||Du_n|^\gamma-|Du|^\gamma
   \bigr\|_{L^{q_c}(Q_\tau)}
 \rightarrow0.
 \end{aligned}
\end{equation*}
Moreover, $u_n\rightarrow u$ locally uniformly in $Q_T$.

\end{corollary}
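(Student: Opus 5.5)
The plan is to apply Theorem~\ref{thm:main-pub} to the family $\U=\{u_n\}$ with source family $\F=\{f_n\}$, and then upgrade weak compactness to strong compactness. The upgrade comes from a smallness-absorption argument at small scales, which is the same mechanism used in the proof of the main estimate.

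\emph{Step 1: the hypotheses of Theorem~\ref{thm:main-pub}.} Since $f_n\to f$ strongly in $L^{\qc}(Q_T)$, the set $\{f_n\}$ is bounded. By Vitali's theorem it is also uniformly equi-integrable in the sense of \eqref{vanishing-omega}: for $|E|\le s$,
\[
\|f_n\|_{L^{\qc}(E)}\le \|f_n-f\|_{L^{\qc}(Q_T)}+\|f\|_{L^{\qc}(E)},
\]
and finitely many indices can be handled by hand. For the uniform $L^\infty$ bound \eqref{eq:uniform-Linfty-pub}, I would use the comparison principle. From above, $u_n\le \|u_{0,n}\|_\infty+v_n$, where $v_n$ solves the heat equation with source $f_n^+$ and zero initial data; $\|v_n\|_\infty$ is controlled because $\qc>(d+2)/2$ when $\gamma>2$. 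The lower bound is the delicate side, since $|Du|^\gamma$ has the good sign only for upper bounds. Here I would use the adjoint/duality estimate that the paper uses for the initial layer, namely $\int u_n(\cdot,t)\,\rho\ge\ldots$ with $\|f_n\|_{L^{\qc}}$ controlling the Hölder dual, or the argument already carried out in the proof of Theorem~\ref{thm:critical-existence-pub}, which the corollary is stated to follow from. The same part of that proof gives equicontinuity at $t=0$ and a uniform interior Hölder bound from \cite{Cirant2025}. Together these yield local uniform convergence along a subsequence, and the limit $u$ coincides with the solution constructed there.

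\emph{Step 2: weak compactness.} Theorem~\ref{thm:main-pub} bounds $\|u_n\|_{W^{2,1}_{\qc}(Q_\tau)}$ and $\||Du_n|^\gamma\|_{L^{\qc}(Q_\tau)}$ uniformly. Passing to a diagonal subsequence over $\tau=1/k$, we get $u_n\rightharpoonup u$ weakly in $W^{2,1}_{\qc}(Q_\tau)$. Since $W^{2,1}_{\qc}\hookrightarrow W^{1,0}_{p}$ compactly for $p<\gamma\qc$, we also get $Du_n\to Du$ a.e. The remaining task is to exclude concentration of $|Du_n|^{\gamma\qc}$; this is the main obstacle, and it is exactly where endpoint non-compactness bites.

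\emph{Step 3: strong convergence via the difference equation.} Set $w_n=u_n-u$. It satisfies
\[
\partial_t w_n-\Delta w_n=(f_n-f)-\bigl(|Du_n|^\gamma-|Du|^\gamma\bigr).
\]
Localize with a cutoff $\eta$ supported in $Q_{\tau/2}$ and work on small parabolic cylinders of radius $r$. By the mean value theorem,
\[
\bigl||Du_n|^\gamma-|Du|^\gamma\bigr|\le C\bigl(|Du_n|^{\gamma-1}+|Du|^{\gamma-1}\bigr)|Dw_n|.
\]
Apply Hölder's inequality with the drift factor in $L^{d+2}$, and use $(\gamma-1)(d+2)=\gamma\qc$. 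The factor $\||Du_n|^{\gamma-1}\|_{L^{d+2}(Q_r)}$ is small uniformly in $n$ for small $r$. This follows from the uniform little-Hölder estimate underlying Theorem~\ref{thm:main-pub}, combined with the critical Gagliardo--Nirenberg inequality: it gives $\|Du_n\|_{L^{\gamma\qc}(Q_r)}^{\gamma}\le \varepsilon(r)\,\|u_n\|_{W^{2,1}_{\qc}}+C$, and in fact $\varepsilon(r)\|\ldots\|\to0$ after using the uniform $W^{2,1}_{\qc}$ bound. The product term is then bounded by $o_r(1)\,\|Dw_n\|_{L^{\gamma\qc}}\le o_r(1)\,\|\eta w_n\|_{W^{2,1}_{\qc}}+C\|w_n\|_{L^\infty}$ by the endpoint embedding. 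The parabolic Calderón--Zygmund estimate gives
\[
\|\eta w_n\|_{W^{2,1}_{\qc}}\le C\|f_n-f\|_{L^{\qc}}+o_r(1)\|\eta w_n\|_{W^{2,1}_{\qc}}+C_r\bigl(\|w_n\|_{L^\infty}+\|Dw_n\|_{L^{\qc}}\bigr).
\]
We absorb the middle term, note that the last terms tend to $0$ by Step 2, and cover $Q_\tau$ by finitely many such cylinders. This gives $w_n\to0$ in $W^{2,1}_{\qc}(Q_\tau)$. The endpoint embedding then yields $Du_n\to Du$ in $L^{\gamma\qc}(Q_\tau)$, and the mean value bound above yields $|Du_n|^\gamma\to|Du|^\gamma$ in $L^{\qc}(Q_\tau)$.

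If the smallness of the drift is not directly available in this form, I would fall back on Proposition~\ref{prop:universal-energy-pub} together with the two-scale blow-up. Suppose concentration occurs at some point. Rescaling at that point produces a sequence with vanishing source ($\omega_\F\to0$ as established in the introduction) and nontrivial normalized energy. The strong endpoint compactness at the second scale, followed by the Liouville theorem, then rules this out.
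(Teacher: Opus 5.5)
Your proposal is correct, but its key step takes a more direct route than the paper.

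The paper proves non-concentration of the full critical energy, $\lim_{r\downarrow0}\sup_n\sup_{z_0\in Q_\tau}\Energy(u_n;Q_r(z_0))=0$, by contradiction. It rescales at concentration points, uses Theorem~\ref{thm:uniform-little-pub} to make the rescaled H\"older seminorm vanish, takes a local energy bound from Proposition~\ref{prop:universal-energy-pub} with $\lambda_k=1$, and then applies Lemma~\ref{lem:GN-pub} and a linear estimate at the blow-up scale.

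You instead apply Lemma~\ref{lem:GN-pub} slice by slice on $B_r(x_0)$, using the uniform little-H\"older modulus $\sigma$. Raising to the power $\qc$ and integrating over a time interval of length $2r^2$ cancels the factor $r^{-2}$ in the lower-order term, giving
\[
\int_{Q_r(z_0)}|Du_n|^{\gamma\qc}\dd x\dd t
\le C\sigma(2r)^{(\gamma-1)\qc}\|D^2u_n\|_{L^{\qc}(Q_{\tau/2})}^{\qc}
+C\sigma(2r)^{\gamma\qc}.
\]
By Theorem~\ref{thm:main-pub} this tends to $0$ uniformly in $n$ and $z_0$. Only this gradient part enters the drift bound \eqref{eq:existence-drift-energy-pub}, so you bypass Proposition~\ref{prop:universal-energy-pub} entirely. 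In fact, the local energy bound the paper takes from that proposition at this point already follows from Theorem~\ref{thm:main-pub} and the scale invariance of $\Energy$. The paper's route controls all of $\Energy$, not just the gradient term. Yours is shorter and shows that the maximal-regularity bound plus the uniform little-H\"older modulus is all the stability argument needs.

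Two points need tightening.

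First, your bound ``$\le\varepsilon(r)\|u_n\|_{W^{2,1}_{\qc}}+C$'' does not give smallness if $C$ is a fixed constant. The additive term must be the small quantity $C\sigma(2r)^\gamma$ produced by the computation above.

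Second, the equation for $w_n=u_n-u$ presupposes that $u$ solves $u_t-\Delta u+|Du|^\gamma=f$ in $Q_\tau$. You should not take this from the proof of Theorem~\ref{thm:critical-existence-pub}, since that proof derives it from the very convergence you are establishing. It follows directly from your Step~2: $Du_n\to Du$ a.e. and $\sup_n\||Du_n|^\gamma\|_{L^{\qc}(Q_\tau)}<\infty$ give $|Du_n|^\gamma\rightharpoonup|Du|^\gamma$ in $L^{\qc}$. Alternatively, run your absorption argument on $u_n-u_m$, as the paper does.

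With these fixes your cutoff-and-absorption step is sound, and the blow-up fallback in your last paragraph is unnecessary.
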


The remainder of the paper is organized as follows.  In
Section~\ref{sect2}, we collect preliminary results, including the endpoint
interpolation inequality, small-drift regularity estimates and a Liouville
theorem.  Section~\ref{sec:first-scale-pub} is devoted to the two-stage
blow-up argument, through which we establish a universal local
critical-energy bound, prove strong local compactness, and derive the
uniform little-H\"older estimate.  Finally, in
Section~\ref{sec:absorption-pub}, the endpoint interpolation inequality and
the parabolic Calder\'on--Zygmund estimate are combined to complete the
proof of Theorem~\ref{thm:main-pub}.  In Section~{\ref{sect5}}, we apply the
coefficient-uniform energy estimate shown in Proposition \ref{prop:universal-energy-pub} and Theorem \ref{thm:main-pub} to prove the strong stability and
existence result stated in Theorem~\ref{thm:critical-existence-pub}.
 
\section{Notation and preliminaries}\label{sect2}
In this section, we introduce the notation and establish the preliminary
results needed in the sequel. 
 For \(z=(x,t)\) and \(\bar z=(\bar x,\bar t)\), we  set the distance
\begin{equation*}%\label{eq:parabolic-distance-pub}
 \dpd(z,\bar z):=
 \operatorname{dist}_{\T^d}(x,\bar x)+|t-\bar t|^{1/2}.
\end{equation*}
In addition, for 
\(z_0=(x_0,t_0)\), we denote
\begin{equation*}%\label{eq:parabolic-cylinder-pub}
 Q_r(z_0):=B_r(x_0)\times(t_0-r^2,t_0+r^2),
 \qquad Q_r:=Q_r(0,0),
\end{equation*}
where $r>0$ is a constant.  %Define $ \gp:=\frac{\gamma}{\gamma-1}$, then
%\begin{equation}\label{eq:critical-exponents-pub}
% \qc:=\frac{d+2}{\gp}
  %    =\frac{(d+2)(\gamma-1)}{\gamma},
 %\qquad
 %\ac:=2-\frac{d+2}{\qc}
%     =\frac{\gamma-2}{\gamma-1}.
%\end{equation}
%The identities used below are
%\begin{equation}\label{eq:critical-identities-pub}
% 0<\ac<1,
% \quad 1<\qc<d+2,
% \quad \gp\qc=d+2,
% \quad \gamma\qc=(d+2)(\gamma-1),
%\end{equation}
%and
%\begin{equation}\label{eq:homogeneity-identities-pub}
 %2-\ac=\gp,
% \qquad \gamma(1-\ac)=\gp.
%\end{equation}
%Define $\mathcal F$ is the uniformly equi-integrable subset of % $L^{\qc}(Q_T)$.  More precisely, set
%\begin{equation}\label{eq:equi-modulus-pub}
% \omega_{\F}(s):=
% \sup_{f\in\F}\ \sup_{\substack{E\subset Q_T\\ |E|\le s}}
% \int_E |f|^{\qc}\dd x\dd t,
%\end{equation}
%then we say \(\F\) is bounded and uniformly equi-integrable in
%\(L^{\qc}(Q_T)\) if
%\begin{align}\label{vanishing-omega}
% \sup_{f\in\F}\|f\|_{L^{\qc}(Q_T)}<\infty
% \quad\text{and}\quad
% \omega_{\F}(s)\rightarrow0\quad\text{ as }s\downarrow0.
%\end{align}
For an open set \(G\subset\mathbb R^d\times\mathbb R\) and
\(1\le p<\infty\), define
\begin{align}\label{W21p-parabolic}
W^{2,1}_p(G)
:=
\left\{
u\in L^p(G):
u_t,\,Du,\,D^2u\in L^p(G)
\right\},
\end{align}
equipped with the equipped  norm 
\[
\|u\|_{W^{2,1}_p(G)}
:=
\|u\|_{L^p(G)}
+\|u_t\|_{L^p(G)}
+\|Du\|_{L^p(G)}
+\|D^2u\|_{L^p(G)}.
\]
We first recall  the  standard Calder\'on--Zygmund estimate for the heat equation.
\begin{lemma}[Chapter IV in \cite{LadyzhenskayaSolonnikovUraltseva1968}]
\label{lem:preliminary-linear-estimates}
Let \(1<p<\infty\) and \(0<r<R\le2\). Then there exists a constant
\(C=C(d,p)>0\)  such that every
\(z\in W^{2,1}_p(Q_R)\) satisfies
\begin{equation}\label{eq:local-CZ-pub}
\|z_t\|_{L^p(Q_r)}
+\|D^2z\|_{L^p(Q_r)}
\le
C\left(
\|z_t-\Delta z\|_{L^p(Q_R)}
+(R-r)^{-2}\|z\|_{L^p(Q_R)}
\right).
\end{equation}
If, in addition, \(1<p<d+2\), define \(p^\sharp\) by
\begin{equation}\label{eq:p-sharp-pub}
\frac1{p^\sharp}
:=
\frac1p-\frac1{d+2},
\end{equation}
where $p^\sharp=\frac{(d+2)p}{d+2-p}.$
Then
\begin{equation}\label{eq:parabolic-sobolev-pub}
\|Dz\|_{L^{p^\sharp}(Q_r)}
\le
C\left(
\|z_t\|_{L^p(Q_R)}
+\|D^2z\|_{L^p(Q_R)}
+(R-r)^{-2}\|z\|_{L^p(Q_R)}
\right).
\end{equation}
\end{lemma}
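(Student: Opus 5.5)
The statement to prove is the classical local Calder\'on--Zygmund estimate \eqref{eq:local-CZ-pub} together with the parabolic Sobolev embedding \eqref{eq:parabolic-sobolev-pub}. The plan is a cutoff argument reducing both to whole-space estimates.

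\textbf{Step 1: localization.} Fix a cutoff $\eta\in C_c^\infty(Q_R)$ with $\eta\equiv1$ on $Q_r$, $0\le\eta\le1$ and
\[
|D\eta|\le C(R-r)^{-1},\qquad |\eta_t|+|D^2\eta|\le C(R-r)^{-2}.
\]
Set $w:=\eta z$, extended by zero, so that $w\in W^{2,1}_p(\R^{d+1})$ and
\[
w_t-\Delta w=\eta(z_t-\Delta z)+z(\eta_t-\Delta\eta)-2D\eta\cdot Dz .
\]

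\textbf{Step 2: the Calder\'on--Zygmund estimate.} The whole-space estimate for the heat operator (singular integrals with parabolic homogeneity, LSU Ch.~IV) gives
\[
\|w_t\|_{L^p}+\|D^2w\|_{L^p}\le C(d,p)\,\|w_t-\Delta w\|_{L^p}.
\]
The right-hand side is bounded by $\|z_t-\Delta z\|_{L^p(Q_R)}+C(R-r)^{-2}\|z\|_{L^p(Q_R)}+C(R-r)^{-1}\|Dz\|_{L^p(\operatorname{supp}\eta)}$.

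\textbf{Step 3: absorbing the gradient term.} This is the only genuine obstacle. I would run the argument on a continuum of intermediate cylinders $Q_\rho$, $r\le\rho\le R$. On each I use the interpolation inequality
\[
\|Dz\|_{L^p(Q_\rho)}\le\epsilon(R-r)\|D^2z\|_{L^p(Q_\rho)}+C\epsilon^{-1}(R-r)^{-1}\|z\|_{L^p(Q_\rho)},
\]
which holds on cylinders with $\rho\le2$. Consider the weighted seminorm $\Phi(\rho):=\|z_t\|_{L^p(Q_\rho)}+\|D^2z\|_{L^p(Q_\rho)}$. Steps 1--2 applied between $Q_\rho$ and $Q_{\rho'}$ then yield
\[
\Phi(\rho)\le\tfrac12\Phi(\rho')+C\bigl(\|z_t-\Delta z\|_{L^p(Q_R)}+(\rho'-\rho)^{-2}\|z\|_{L^p(Q_R)}\bigr).
\]
The standard iteration lemma (Giaquinta's lemma on $[r,R]$) removes the $\tfrac12\Phi$ term and gives \eqref{eq:local-CZ-pub}.

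Time cutoffs require no care here, because $Q_R$ is two-sided in time. The restriction $R\le2$ is what makes the lower-order $\|z\|_{L^p}$ terms controllable by $(R-r)^{-2}$ with a constant independent of $R$.

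\textbf{Step 4: the Sobolev embedding.} For \eqref{eq:parabolic-sobolev-pub}, apply the anisotropic Sobolev embedding on $\R^{d+1}$ with homogeneous dimension $d+2$:
\[
\|Dw\|_{L^{p^\sharp}}\le C\bigl(\|w_t\|_{L^p}+\|D^2w\|_{L^p}\bigr),\qquad 1<p<d+2 .
\]
This follows from representing $Dw=D\Gamma*(w_t-\Delta w)$, where $D\Gamma$ is homogeneous of degree $-(d+1)$ in parabolic scaling, and then using parabolic Hardy--Littlewood--Sobolev.

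Expanding $w_t$ and $D^2w$ with the cutoff produces the terms $(R-r)^{-1}\|Dz\|_{L^p}$ and $(R-r)^{-2}\|z\|_{L^p}$. The gradient term is handled exactly as in Step 3 by interpolation, which gives \eqref{eq:parabolic-sobolev-pub}.
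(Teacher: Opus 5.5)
Your proposal is correct. For \eqref{eq:parabolic-sobolev-pub}, your Step~4 is the paper's argument. You cut off with $\eta$, apply the whole-space parabolic Sobolev embedding to $\eta z$, and remove $(R-r)^{-1}\|Dz\|_{L^p(Q_R)}$ by a single interpolation on $Q_R$. No iteration is needed there, because $\|D^2z\|_{L^p(Q_R)}$ already appears on the right, so ``exactly as in Step~3'' is more than required.

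The only difference is in the embedding itself. The paper uses the inhomogeneous bound $\|D(\eta z)\|_{L^{p^\sharp}}\le C\|\eta z\|_{W^{2,1}_p}$, whose unweighted terms it converts into $(R-r)^{-2}\|z\|_{L^p}$ and $(R-r)^{-1}\|Dz\|_{L^p}$ using $R\le 2$. Your homogeneous version, via $D\Gamma$ and parabolic Hardy--Littlewood--Sobolev, makes both estimates scale invariant. So in your argument $R\le2$ plays no role, and your remark about it really belongs to the paper's version.

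For \eqref{eq:local-CZ-pub}, the paper gives no proof and simply cites LSU, whereas you derive it from the whole-space Calder\'on--Zygmund estimate. The hole-filling iteration is genuinely necessary here. The right-hand side of \eqref{eq:local-CZ-pub} contains no $\|D^2z\|_{L^p(Q_R)}$ into which the commutator term $2D\eta\cdot Dz$ could be absorbed. Your route therefore gives a self-contained proof at the price of the standard iteration lemma.

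One correction to Step~3: the interpolation must be applied on $Q_{\rho'}$ with parameter $\epsilon(\rho'-\rho)$, not with $\epsilon(R-r)$ on $Q_\rho$. An inequality $\|Dz\|_{L^p(B_\rho)}\le\delta\|D^2z\|_{L^p(B_\rho)}+C\delta^{-1}\|z\|_{L^p(B_\rho)}$ can hold only for $\delta\lesssim\rho$; test it on a linear function to see this. That condition fails for $\delta=\epsilon(R-r)$ when $\rho\ll R-r$, while $\epsilon(\rho'-\rho)\le\rho'$ is always admissible. With that choice you obtain exactly your displayed inequality $\Phi(\rho)\le\frac12\Phi(\rho')+C\bigl(\|z_t-\Delta z\|_{L^p(Q_R)}+(\rho'-\rho)^{-2}\|z\|_{L^p(Q_R)}\bigr)$. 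The iteration lemma then applies, since $\Phi\le\Phi(R)<\infty$.
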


\begin{proof}
 \eqref{eq:local-CZ-pub} is the standard interior
Calder\'on--Zygmund estimate of heat equations and we refer the reader to \cite{LadyzhenskayaSolonnikovUraltseva1968}.  

To prove \eqref{eq:parabolic-sobolev-pub}, we set $\delta:=R-r$
and choose \(\eta\in C_c^\infty(Q_R)\) such that $0\le\eta\le1,~
\eta\equiv1\ \text{on }Q_r$ and
\[
|D\eta|\le C\delta^{-1},
\qquad
|D^2\eta|+|\eta_t|\le C\delta^{-2}.
\]
Applying the Sobolev inequality to \(\eta z\), extended
by zero outside \(Q_R\), gives
\[
\|D(\eta z)\|_{L^{p^\sharp}(\mathbb R^d\times\mathbb R)}
\le
C\|\eta z\|_{W^{2,1}_p(\mathbb R^d\times\mathbb R)}.
\]
Hence,
\begin{align*}
\|Dz\|_{L^{p^\sharp}(Q_r)}
&\le
C\Big(
\|z_t\|_{L^p(Q_R)}
+\|D^2z\|_{L^p(Q_R)}
+\delta^{-1}\|Dz\|_{L^p(Q_R)}
+\delta^{-2}\|z\|_{L^p(Q_R)}
\Big).
\end{align*}
In addition, the Sobolev interpolation inequality implies
\[
\delta^{-1}\|Dz\|_{L^p(Q_R)}
\le
C\|D^2z\|_{L^p(Q_R)}
+C\delta^{-2}\|z\|_{L^p(Q_R)}.
\]
Combining the inequalities shown above, we  prove
\eqref{eq:parabolic-sobolev-pub}.
\end{proof}

We now state the key {small-drift regularity lemma} for the linearized
Hamilton--Jacobi equation with a small drift.
\begin{lemma}
\label{lem:small-drift-pub}
Let \(1<p<d+2\).  There are constants
\(\kappa_p>0\) and \(C_p>0\), depending only on \(d\) and \(p\)
such that the following properties hold.

\begin{enumerate}[label=\textup{(\roman*)}]
\item If \(\mathbf{b}\in L^{d+2}(Q_2;\R^d)\),
\(\|\mathbf{b}\|_{L^{d+2}(Q_2)}\le\kappa_p\), and
\(z\in W^{2,1}_p(Q_2)\) satisfies
\begin{equation}\label{eq:drift-equation-pub}
 z_t-\Delta z+\mathbf{b}\cdot Dz=h
 \quad\text{in }Q_2,
\end{equation}
then
\begin{equation}\label{eq:small-drift-estimate-pub}
 \|z_t\|_{L^p(Q_1)}+\|D^2z\|_{L^p(Q_1)}
 \le C_p\left(
 \|h\|_{L^p(Q_2)}+\|z\|_{L^p(Q_2)}
 \right).
\end{equation}

\item Let \(1<p_0<p<d+2\).  Suppose
\(z\in W^{2,1}_{p_0}(Q_2)\) solves
\eqref{eq:drift-equation-pub}, \(h\in L^p(Q_2)\), and $ \|\mathbf{b}\|_{L^{d+2}(Q_2)}
 \le\min\{\kappa_{p_0},\kappa_p\}.$
If, in addition,
\begin{equation}\label{eq:drift-upgrade-assumptions-pub}
 z\in L^\infty_{\mathrm{loc}}(Q_2),
 \qquad
 Dz\in L^s_{\mathrm{loc}}(Q_2)
 \quad\text{for some }s>d+2,
\end{equation}
then  \(z\in W^{2,1}_p(Q_1)\).
\end{enumerate}
%\rev{The constants $\kappa_p$ are decreased, if necessary, so that the
%smallness thresholds apply both to the interior estimates and to the
%maximal-regularity estimates with zero forward parabolic boundary data
%used below.}
\end{lemma}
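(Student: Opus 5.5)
The plan is to localize, move the drift term to the right-hand side, and absorb it using the smallness of $\mathbf b$ in $L^{d+2}$ together with H\"older's inequality and the parabolic Sobolev embedding \eqref{eq:parabolic-sobolev-pub}.

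For part (i), take radii $1\le r<R\le2$ and write the equation as $z_t-\Delta z=h-\mathbf b\cdot Dz$. Applying \eqref{eq:local-CZ-pub} on $Q_r\subset Q_R$ gives
\[
\|z_t\|_{L^p(Q_r)}+\|D^2z\|_{L^p(Q_r)}\le C\bigl(\|h\|_{L^p(Q_2)}+\|\mathbf b\cdot Dz\|_{L^p(Q_R)}+(R-r)^{-2}\|z\|_{L^p(Q_2)}\bigr).
\]
Since $1/p=1/(d+2)+1/p^\sharp$, H\"older's inequality gives
\[
\|\mathbf b\cdot Dz\|_{L^p(Q_R)}\le\|\mathbf b\|_{L^{d+2}}\,\|Dz\|_{L^{p^\sharp}(Q_R)}.
\]
We then bound $\|Dz\|_{L^{p^\sharp}(Q_R)}$ by \eqref{eq:parabolic-sobolev-pub} on an intermediate cylinder $Q_{R'}$ with $R<R'$. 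The constants there scale like $(R'-R)^{-2}$ only on the zero-order term, while the coefficient of the top-order terms is uniform. Set
\[
\Phi(r):=\|z_t\|_{L^p(Q_r)}+\|D^2z\|_{L^p(Q_r)}.
\]
This yields
\[
\Phi(r)\le C\kappa_p\,\Phi(R')+C\|h\|_{L^p(Q_2)}+C(R'-r)^{-2}\|z\|_{L^p(Q_2)}
\]
for $1\le r<R'\le2$, with the intermediate radius taken at the midpoint. Choosing $\kappa_p$ so that $C\kappa_p\le1/2$, the standard iteration lemma for such inequalities removes the $\Phi(R')$ term. The result is \eqref{eq:small-drift-estimate-pub} with $r=1$, since the singular weights $(R'-r)^{-2}$ are summable along a geometric sequence of radii.

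One point needs checking: $\Phi(2)$ must be finite for the iteration to make sense. This holds because $z\in W^{2,1}_p(Q_2)$. A cleaner alternative is to work with $\eta z$ for a cutoff $\eta$ and use global Calder\'on--Zygmund and Sobolev estimates on $\R^{d+1}$. In that version, $\|\mathbf b\cdot D(\eta z)\|_{L^p}\le\kappa_p C\|\eta z\|_{W^{2,1}_p}$ is absorbed directly. The commutator terms contain $Dz\,D\eta$ and $z\,\eta_t$, which are lower order; these are handled by interpolation followed by the same radius iteration.

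For part (ii), the difficulty is that a priori we only know $z\in W^{2,1}_{p_0}$, so the $L^p$ norm on the left-hand side in (i) is not known to be finite. Absorption is therefore not legitimate directly. I would first use the extra hypotheses \eqref{eq:drift-upgrade-assumptions-pub}. Since $Dz\in L^s_{\rm loc}$ with $s>d+2$, H\"older gives $\mathbf b\cdot Dz\in L^{\sigma}_{\rm loc}$, where $1/\sigma=1/(d+2)+1/s<2/(d+2)$, so $\sigma>(d+2)/2$. Then $g:=h-\mathbf b\cdot Dz\in L^{\min(p,\sigma)}_{\rm loc}$.

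Next, local Calder\'on--Zygmund theory for the heat equation, applied to $\eta z$, gives $z\in W^{2,1}_{\min(p,\sigma)}$ on slightly smaller cylinders. The terms $z\eta_t$ and $Dz\cdot D\eta$ lie in $L^\infty$ and $L^s$ respectively, so they are harmless. If $p\le\sigma$ we are done. Otherwise we bootstrap: $z\in W^{2,1}_{p_1}$ with $p_1=\sigma$ gives $Dz\in L^{p_1^\sharp}$, hence $\mathbf b\cdot Dz\in L^{p_2}$ with $1/p_2=1/(d+2)+1/p_1^\sharp=1/p_1$. This does not improve the exponent.

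So the genuine gain must come from the smallness of $\mathbf b$ rather than from integrability. The step I expect to be hardest is exactly this, and there I would use a fixed-point (perturbation) argument instead of a bootstrap. On $\R^{d+1}$ with zero data in the past, the map $w\mapsto$ the solution of $w_t-\Delta w=-\mathbf b\eta\cdot Dw+F$ is a contraction simultaneously in $W^{2,1}_{p_0}$ and $W^{2,1}_p$, by the global estimates and $\|\mathbf b\|_{L^{d+2}}\le\min\{\kappa_{p_0},\kappa_p\}$. Its fixed point therefore lies in $W^{2,1}_p$. Uniqueness in $W^{2,1}_{p_0}$ identifies it with $\eta z$, with the cutoff errors placed in $F\in L^p$ using the bounds on $z$ and $Dz$ obtained above. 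This is presumably why the remark about zero forward boundary data and a reduced $\kappa_p$ is needed. Once $z\in W^{2,1}_p(Q_1)$ is established, part (i) supplies the quantitative estimate.
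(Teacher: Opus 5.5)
Your proposal is correct and follows the paper's proof. Part (i) is the same Calder\'on--Zygmund, H\"older and parabolic Sobolev hole-filling iteration. In part (ii), your contraction on a time slab with zero past data is the same small-drift invertibility mechanism as the paper's method of continuity on $X_p(Q_R)$, and $\eta z$ is identified with the $W^{2,1}_p$ solution by uniqueness in $W^{2,1}_{p_0}$. One small fix: the truncated drift must equal $\mathbf b$ on the support of $\eta$ (e.g.\ extend $\mathbf b$ by zero outside $Q_2$). With $\eta\mathbf b$, the equation for $\eta z$ picks up the term $(1-\eta)\eta\,\mathbf b\cdot Dz$, which is only known to lie in $L^\sigma$ with $1/\sigma=1/(d+2)+1/s$, not in $L^p$.
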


\begin{proof}
We first prove conclusion (i).  Fix \(1<r<R<2\) and set
\[
 X(\rho):=
 \|z_t\|_{L^p(Q_\rho)}+\|D^2z\|_{L^p(Q_\rho)}.
\]
Let \(m=(r+R)/2\).  Applying \eqref{eq:local-CZ-pub} in Lemma \ref{lem:preliminary-linear-estimates} on
\(Q_r\Subset Q_m\) to
\(z_t-\Delta z=h-\mathbf{b}\cdot Dz\), we obtain
\begin{equation}\label{eq:drift-CZ-step-pub}
 X(r)\le C_p\left(
 \|h\|_{L^p(Q_m)}+\|\mathbf{b}\cdot Dz\|_{L^p(Q_m)}
 +(R-r)^{-2}\|z\|_{L^p(Q_m)}
 \right).
\end{equation}
Using H\"older's inequality and \eqref{eq:p-sharp-pub}, one has
\[
 \|\mathbf{b}\cdot Dz\|_{L^p(Q_m)}
 \le\|\mathbf{b}\|_{L^{d+2}(Q_m)}
      \|Dz\|_{L^{p^\sharp}(Q_m)}.
\]
Applying the estimate shown in  \eqref{eq:parabolic-sobolev-pub} on
\(Q_m\Subset Q_R\), we obtain
\begin{equation}\label{eq:drift-product-pub}
 \|\mathbf{b}\cdot Dz\|_{L^p(Q_m)}
 \le C_p\|\mathbf{b}\|_{L^{d+2}(Q_2)}
 \left(
 X(R)+(R-r)^{-2}\|z\|_{L^p(Q_R)}
 \right).
\end{equation}
Combining \eqref{eq:drift-CZ-step-pub} and
\eqref{eq:drift-product-pub} yields
\begin{equation}\label{eq:drift-hole-pub}
 X(r)
 \le C_p\|\mathbf{b}\|_{L^{d+2}(Q_2)}X(R)
 +C_p(R-r)^{-2}
 \left(
 \|h\|_{L^p(Q_2)}+\|z\|_{L^p(Q_2)}
 \right).
\end{equation}
Choose \(\kappa_p\) so that \(C_p\kappa_p\le1/8\).  Set
\(r_j=2-2^{-j}\), \(j\ge0\).  Iterating
\eqref{eq:drift-hole-pub} along \((r_j)_j\)  with
\(
 A=\|h\|_{L^p(Q_2)}+\|z\|_{L^p(Q_2)}
\)
gives
\[
 X(1)\le8^{-m}X(r_m)
 +C_pA\sum_{j=0}^{m-1}8^{-j}4^{j+1}.
\]
Letting $m\rightarrow+\infty$, we finish the proof of
\eqref{eq:small-drift-estimate-pub}.

We now prove conclusion (ii).  Fix \(1<r<R<2\), and choose
\(\chi\in C_c^\infty(Q_R)\) with \(\chi=1\) on \(Q_r\).  Define
\(Z=\chi z\).  A direct computation  gives
\begin{equation*}%\label{eq:localized-drift-pub}
 Z_t-\Delta Z+\mathbf{b}\cdot DZ
 =\chi h+\chi_tz-2D\chi\cdot Dz-z\Delta\chi
  +(\mathbf{b}\cdot D\chi)z=:F.
\end{equation*}
The assumptions in \eqref{eq:drift-upgrade-assumptions-pub} imply
\(F\in L^p(Q_R)\).  Indeed, since \(p<d+2<s\), $
 D\chi\cdot Dz\in L^s(Q_R)\subset L^p(Q_R),$
 and $(\mathbf{b}\cdot D\chi)z\in L^{d+2}(Q_R)\subset L^p(Q_R).$ 
 
 Let
$\partial_{\mathrm p}Q_R
 :=\bigl(\partial B_R\times[-R^2,R^2]\bigr)
   \cup\bigl(B_R\times\{-R^2\}\bigr)$
be the forward parabolic boundary and set $X_p(Q_R):=
 \{Y\in W^{2,1}_p(Q_R):Y=0
 \text{ on }\partial_{\mathrm p}Q_R\}.$  For \(0\le\theta\le1\), define
\(L_\theta Y=Y_t-\Delta Y+\theta \mathbf{b}\cdot DY\).  By using $W^{2,1}_p$
estimate, H\"older's inequality and the   Sobolev inequality, we have 
\begin{align}\label{RHS}
 \|Y\|_{W^{2,1}_p(Q_R)}
 \le C_p\|L_\theta Y\|_{L^p(Q_R)}
 +C_p\|\mathbf{b}\|_{L^{d+2}(Q_R)}
       \|Y\|_{W^{2,1}_p(Q_R)}.
\end{align}
The last term in (\ref{RHS}) is absorbed by the fact that \(\|\mathbf{b}\|_{L^{d+2}(Q_2)}\le\kappa_p\).  Since \(L_0:X_p(Q_R)\to L^p(Q_R)\) is an isomorphism,
the method of continuity shows that \(L_1\) is an isomorphism.  Hence,
there is a unique \(Y\in X_p(Q_R)\) satisfying \(L_1Y=F\).

Since \(p>p_0\) and \(Q_R\) has finite measure,
\(Y,Z\in W^{2,1}_{p_0}(Q_R)\).  Their subtraction satisfies
\(L_1(Y-Z)=0\) with zero boundary condition.  The invertibility estimate further implies \(Y=Z\).  Therefore,
\(Z\in W^{2,1}_p(Q_R)\)  and \(z=Z\in W^{2,1}_p(Q_r)\).  Letting
\(r\downarrow1\) it finishes the  proof of  (ii).
\end{proof}

We next establish Liouville theorem of the homogeneous Hamilton--Jacobi equation.
\begin{lemma}
\label{lem:endpoint-liouville-pub}
Let \(\mu\ge0\), and suppose $v\in W^{2,1}_{\qc,\mathrm{loc}}(\R^d\times\R)
 \cap C^{\ac,\ac/2}(\R^d\times\R)$
solves
\begin{equation*}%\label{eq:liouville-equation-pub}
 v_t-\Delta v+\mu|Dv|^\gamma=0
 \quad\text{in }\R^d\times\R,
\end{equation*}
and satisfies
\begin{equation*}%\label{eq:liouville-growth-pub}
 \sup_{z\ne\bar z}
 \frac{|v(z)-v(\bar z)|}{\dpd(z,\bar z)^{\ac}}<\infty.
\end{equation*}
Then \(v\) is constant.
\end{lemma}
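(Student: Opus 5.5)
The plan is to reduce to the two cases \(\mu=0\) and \(\mu>0\), normalize \(\mu\) in the second case, and in both cases show \(Dv\equiv0\) by combining a local gradient estimate with the critical scaling, which makes the growth assumption sublinear. For \(\mu>0\) I would set \(w:=\mu^{1/(\gamma-1)}v\). Then
\[
w_t-\Delta w+|Dw|^\gamma=0,
\]
and \(w\) inherits the global \(C^{\ac,\ac/2}\) seminorm bound, so it suffices to treat \(\mu\in\{0,1\}\).

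\textbf{Case \(\mu=0\).} Here \(v\) is a caloric function in \(W^{2,1}_{\qc,\mathrm{loc}}\), hence smooth by interior regularity. Fix \(z\). The interior derivative estimate applied to \(v-v(z)\) on \(Q_R(z)\) gives
\[
|Dv(z)|\le CR^{-1}\operatorname{osc}_{Q_R(z)}v\le C[v]_{\ac}R^{\ac-1}.
\]
Since \(\ac<1\), letting \(R\to\infty\) yields \(Dv\equiv0\). The same argument gives \(D^2v\equiv0\), so \(v_t=\Delta v=0\) and \(v\) is constant.

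\textbf{Case \(\mu=1\): the scaling argument.} The equation and the seminorm are invariant under the critical rescaling
\[
v_R(x,t):=R^{-\ac}\bigl(v(x_0+Rx,t_0+R^2t)-v(x_0,t_0)\bigr).
\]
This holds because \(2-\ac=\gamma(1-\ac)\) by \eqref{eq:critical-exponents-pub3}. The rescaled family \(\{v_R\}_R\) is therefore uniformly bounded on \(Q_1\), with \(\operatorname{osc}_{Q_1}v_R\le C[v]_{\ac}\). I would then apply a local Bernstein-type gradient bound for \(w_t-\Delta w+|Dw|^\gamma=0\) on \(Q_1\):
\[
|Dw(0)|\le C\bigl(1+\operatorname{osc}_{Q_1}w\bigr).
\]
This is classical for superquadratic Hamiltonians and is obtained from the equation for \(|Dw|^2\) with a cutoff, the coercive term \(|Dw|^{2\gamma}\) dominating the cutoff errors. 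Applied to \(v_R\), it gives \(|Dv_R(0)|\le C\). Unscaling, with \(Dv_R(0)=R^{1-\ac}Dv(z_0)\), gives
\[
|Dv(z_0)|\le CR^{\ac-1}\to0 \quad\text{as } R\to\infty.
\]
Hence \(Dv\equiv0\), then \(v_t=0\), and \(v\) is constant.

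Alternatively, this whole case is exactly the parabolic Liouville theorem of \cite{Cirant2025}, and I would invoke it directly once \(v\) is shown to lie in the class of solutions treated there.

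\textbf{Main obstacle: regularity.} The Bernstein computation, and the applicability of \cite{Cirant2025}, require \(v\) to be smooth, or at least a viscosity solution, whereas \(v\) is only a strong \(W^{2,1}_{\qc,\mathrm{loc}}\) solution. I would handle this by noting that \(\gamma>2\) implies
\[
\qc=\frac{(d+2)(\gamma-1)}{\gamma}>\frac{d+2}{2}.
\]
Continuous strong \(W^{2,1}_{p}\) solutions with \(p\) above the relevant threshold (of order \((d+2)/2\), as in the Crandall--Kocan--\'Swi\k{e}ch \(L^p\)-viscosity theory) are then \(L^p\)-viscosity solutions. I still need to check that \(\qc\) lies above the precise threshold; a margin above \((d+2)/2\) alone may not suffice.

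Granting this, I would argue as follows on each bounded cylinder. Solve the Cauchy--Dirichlet problem with boundary data \(v\) and a regularized Hamiltonian to obtain smooth approximants. Apply the Bernstein bound to them, which is uniform because it depends only on the oscillation. Pass to the limit using comparison and stability of viscosity solutions. This transfers the gradient estimate to \(v\).

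A possible alternative for upgrading \(v\) directly is Lemma~\ref{lem:small-drift-pub}(ii) with drift \(\mathbf b=\gamma|Dv|^{\gamma-2}Dv\in L^{d+2}_{\mathrm{loc}}\), which is locally small on small cylinders. However, that lemma needs the a priori hypothesis \(Dv\in L^s_{\mathrm{loc}}\) for some \(s>d+2\), which is not available here, so I would prefer the viscosity route.
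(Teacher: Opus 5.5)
Your normalization $w=\mu^{1/(\gamma-1)}v$, the $\mu=0$ case, and the critical-scaling argument for $\mu=1$ are fine \emph{once} $v$ is known to be classical and a genuine local gradient bound is available. The gap is the regularity step. Both the Bernstein argument and the appeal to \cite{Cirant2025} need it, and you leave it open.

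Your $L^p$-viscosity route is not carried out. Besides the threshold question you flag yourself, the standard $L^p$-viscosity framework is not set up for superlinear gradient growth such as $|Du|^\gamma$ with $\gamma>2$. Moreover, your final ``transfer'' step needs a comparison principle between $v$ and the limit of the regularized Cauchy--Dirichlet solutions. That is a uniqueness statement for strong solutions in the critical class, which you do not have.

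Separately, your Bernstein sketch uses the elliptic mechanism. For the parabolic equation, $|D^2w|^2\ge(\Delta w)^2/d=(w_t+|Dw|^\gamma)^2/d$ gives no $|Dw|^{2\gamma}$ term without control of $w_t$. So the cutoff error of order $|Dw|^{\gamma+1}|D\eta|$ is not absorbed this way. You would have to cite a genuinely parabolic interior gradient estimate depending on the oscillation.

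The missing idea is that the route you discarded works, and your reason for discarding it is false. The hypothesis $Dv\in L^s_{\mathrm{loc}}$ with $s>d+2$ in Lemma~\ref{lem:small-drift-pub}(ii) is automatic. By \eqref{eq:parabolic-sobolev-pub} with $p=\qc$, one has $\qc^\sharp=(d+2)(\gamma-1)=\gamma\qc$, and $(d+2)(\gamma-1)>d+2$ exactly because $\gamma>2$. This is the same integrability you already used to assert that the drift lies in $L^{d+2}_{\mathrm{loc}}$. Local boundedness follows from the H\"older bound.

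With $\mathbf b=\mu|Dv|^{\gamma-2}Dv$, so that $v_t-\Delta v+\mathbf b\cdot Dv=0$, the rescaled drift $\mathbf b_r(y,s)=r\,\mathbf b(x_0+ry,t_0+r^2s)$ satisfies $\|\mathbf b_r\|_{L^{d+2}(Q_2)}=\|\mathbf b\|_{L^{d+2}(Q_{2r}(z_0))}$. Absolute continuity of the integral therefore makes the drift small after rescaling. Lemma~\ref{lem:small-drift-pub}(ii) with $p_0=\qc$ and $h=0$ then gives $v\in W^{2,1}_{p,\mathrm{loc}}$ for $\qc<p<d+2$. If needed, a further Calder\'on--Zygmund/Schauder bootstrap, using $|Dv|^\gamma\in L^r_{\mathrm{loc}}$ for all $r<\infty$, makes $v$ classical.

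This is the paper's argument. It then applies \cite[Theorem 4.1]{Cirant2025} to the time-reversed function $\widetilde v(x,s)=v(x,T_0-s)$ on $\R^d\times(0,\infty)$ and lets $T_0\in\R$ be arbitrary. With this regularity in hand, your scaling argument would be a valid alternative to citing the Liouville theorem, provided the local gradient bound is properly sourced.
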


\begin{proof}
If \(\mu=0\), the conclusion is standard, see for example \cite{SoupletZhang2006}. When \(\mu>0\), we rewrite the equation as
\begin{equation*}%\label{eq:liouville-linearized-pub}
 v_t-\Delta v+\mathbf{b}\cdot Dv=0,
 \qquad
 \mathbf{b}:=\mu|Dv|^{\gamma-2}Dv.
\end{equation*}
Invoking the Sobolev embedding, one has
\begin{equation}\label{eq:critical-gradient-embedding-pub}
 Dv\in L^{\gamma\qc}_{\mathrm{loc}},
 \qquad \gamma\qc=(d+2)(\gamma-1).
\end{equation}
It then follows that  \(\mathbf{b}\in L^{d+2}_{\mathrm{loc}}\).  Fix \(z_0\), we   use the
absolute continuity of the integral to obtain a constant $r>0$ such that  
\[
 \|\mathbf{b}\|_{L^{d+2}(Q_{2r}(z_0))}
 \le\min\{\kappa_{\qc},\kappa_p\}
\]
for some fixed \(p\) satisfying \(\qc<p<d+2\).  Under the parabolic standard rescaling, the drift becomes
\(\mathbf{b}_r(y,s)=r \mathbf{b}(x_0+ry,t_0+r^2s)\) and
\(
 \|\mathbf{b}_r\|_{L^{d+2}(Q_2)}
 =\|\mathbf{b}\|_{L^{d+2}(Q_{2r}(z_0))}.
\)
The H\"older estimate gives local boundedness of \(v\), while
\eqref{eq:critical-gradient-embedding-pub} gives $ Dv\in L^{(d+2)(\gamma-1)}_{\mathrm{loc}}$ with
$(d+2)(\gamma-1)>d+2.$
Now, we apply conclusion (ii) in Lemma~\ref{lem:small-drift-pub} with \(p_0=\qc\) and \(h=0\), then 
obtain \(v\in W^{2,1}_{p,\mathrm{loc}}\).  Fix any
\(T_0\in\R\) and set \(\widetilde v(x,s)=v(x,T_0-s)\) for \(s>0\).
Moreover, applying \cite[Theorem 4.1]{Cirant2025} to \(\widetilde v\), we conclude
that \(\widetilde v\) is constant on
\(\mathbb R^d\times(0,\infty)\). Equivalently, \(v\) is constant on
\(\mathbb R^d\times(-\infty,T_0)\). Since
\(T_0\) is arbitrary, \(v\) is constant in $\mathbb R^d\times \mathbb R.$ 
\end{proof}

We next state the Gagliardo--Nirenberg inequality used below.
\begin{lemma}
\label{lem:GN-pub}
If \(B=B_r(x_0)\subset\R^d\), \(r>0\) and
\(v\in W^{2,\qc}(B)\cap C^{0,\ac}(B)\), then
\begin{equation}\label{eq:GN-pub}
 \|Dv\|_{L^{\gamma\qc}(B)}^\gamma
 \le C[v]_{C^{0,\ac}(B)}^{\gamma-1}
       \|D^2v\|_{L^{\qc}(B)}
      +Cr^{-2/\qc}[v]_{C^{0,\ac}(B)}^\gamma,
\end{equation}
where constant \(C=C(d,\gamma)>0\).
\end{lemma}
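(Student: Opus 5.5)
The plan is to prove the scale-invariant version first and then recover the general ball by scaling.

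\emph{Scaling.} For $v$ on $B_r(x_0)$ set $w(y):=v(x_0+ry)$ on $B_1$. Then
\[
[w]_{C^{0,\ac}(B_1)}=r^{\ac}[v]_{C^{0,\ac}(B)},\qquad
\|Dw\|_{L^{\gamma\qc}(B_1)}^\gamma=r^{\gamma-d/\qc}\|Dv\|_{L^{\gamma\qc}(B)}^\gamma,\qquad
\|D^2w\|_{L^{\qc}(B_1)}=r^{2-d/\qc}\|D^2v\|_{L^{\qc}(B)}.
\]
Here we used $\gamma\qc\cdot(1/\gamma)=\qc$, so that $\|Dw\|^\gamma_{L^{\gamma\qc}}=r^{\gamma}r^{-d/\qc}\|Dv\|^\gamma_{L^{\gamma\qc}}$. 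The identity $2-\ac=\gamma(1-\ac)$ from \eqref{eq:critical-exponents-pub3} gives $\gamma=(\gamma-1)\ac+2$. Hence the first term on the right of \eqref{eq:GN-pub} scales like the left side. For the second term, $\gamma\ac-\gamma=-2$ shows that $[w]^\gamma$ carries an extra factor $r^{-2}\cdot r^{d/\qc}$ relative to the left side, i.e. the factor $r^{-2+d/\qc}$. This is not the factor $r^{-2/\qc}$ written in the statement. I would therefore check the exponent carefully: the correct dimensional factor is $r^{d/\qc-2+\gamma\ac-\gamma+2}$ evaluated consistently, and the proof would establish whatever power dimensional analysis dictates. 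If the statement's exponent is indeed a misprint, I would note the corrected exponent, or observe that only $r\sim1$ is used later.

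\emph{Unit-ball inequality.} It remains to prove
\[
\|Dw\|_{L^{\gamma\qc}(B_1)}^\gamma\le C[w]_{\ac}^{\gamma-1}\|D^2w\|_{L^{\qc}(B_1)}+C[w]_{\ac}^\gamma .
\]
Subtracting the mean, we may assume $w(x_0)=0$, so that $\|w\|_\infty\le 2^{\ac}[w]_{\ac}$. I would extend $w$ to $\R^d$ with a bounded extension operator that preserves both $C^{0,\ac}$ and $W^{2,\qc}$ (Stein extension), and multiply by a cutoff. This reduces the claim to the homogeneous whole-space inequality
\[
\|Dw\|_{L^{\gamma\qc}}\le C[w]_{\dot B^{\ac}_{\infty,\infty}}^{1-\theta}\|D^2w\|_{L^{\qc}}^{\theta},\qquad \theta=1/\gamma,
\]
plus lower-order terms. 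The lower-order terms come from the cutoff and are controlled by $\|w\|_\infty+[w]_{\ac}\lesssim[w]_{\ac}$.

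\emph{Main obstacle.} The whole-space inequality is the main obstacle, since it is an endpoint Gagliardo--Nirenberg inequality with a H\"older (Besov $B^{\ac}_{\infty,\infty}$) endpoint. It follows from the Triebel--Lizorkin/Besov interpolation theory (e.g. \cite{Triebel1983}): the index identity $1-d/(\gamma\qc)=(1-\theta)\ac+\theta(2-d/\qc)$ holds exactly by \eqref{eq:critical-exponents-pub3}. One route is to apply the interpolation $\dot B^{\ac}_{\infty,\infty}\cap\dot W^{2,\qc}\subset\dot W^{1,\gamma\qc}$, which is valid because $\theta\in(0,1)$ and the smoothness values differ. Alternatively, a Littlewood--Paley proof splits low and high frequencies at the optimal scale. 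Combining the two steps, and noting that Young's inequality is not even needed since the bound is already in product form, yields \eqref{eq:GN-pub}.
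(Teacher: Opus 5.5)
Your overall plan (prove the inequality on $B_1$ with $\theta=1/\gamma$, then rescale) is the paper's. Your extension/Besov justification of the unit-ball estimate is an acceptable, more self-contained substitute for the paper's direct citation of Nirenberg's extended interpolation inequality. Two details there: the $W^{2,\qc}$-bounded extension produces a lower-order term $\|Dw\|_{L^{\qc}(B_1)}$, which you control by $\|Dw\|_{L^{\qc}(B_1)}\le C(\|D^2w\|_{L^{\qc}(B_1)}+\|w\|_{L^{\qc}(B_1)})$ rather than by $\|w\|_\infty+[w]_{\ac}$ alone; and you still need $(a+b)^\gamma\le 2^{\gamma-1}(a^\gamma+b^\gamma)$ to pass to $\gamma$-th powers. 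The genuine error is in the scaling step, and it leads you to reject a correct statement.

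You claim $\gamma\ac-\gamma=-2$. In fact \eqref{eq:critical-exponents-pub3} gives $\gamma\ac-\gamma=-\gamma(1-\ac)=-\gp=-(d+2)/\qc$, which equals $-2$ only when $\gamma=2$. After dividing by $r^{\gamma-d/\qc}$, the second term therefore carries
\[
r^{\gamma\ac-\gamma+d/\qc}=r^{-(d+2)/\qc+d/\qc}=r^{-2/\qc},
\]
which is exactly the factor in the statement, so there is no misprint.

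Your alternative $r^{-2+d/\qc}$ is actually false for large $r$. Take affine $v(x)=a\cdot x$: then $D^2v=0$, $\|Dv\|_{L^{\gamma\qc}(B_r)}^\gamma=c_d|a|^\gamma r^{d/\qc}$, and $[v]_{C^{0,\ac}(B_r)}=|a|(2r)^{1-\ac}$. A term $r^\beta[v]^\gamma\sim|a|^\gamma r^{\beta+\gp}$ can dominate the left side for all $r>0$ only if $\beta=d/\qc-\gp=-2/\qc$. With your $\beta=-2+d/\qc$ the right side is smaller by the factor $r^{\gp-2}\to0$ as $r\to\infty$.

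Your fallback that only $r\sim1$ is used later is also inaccurate. Lemma~\ref{lem:integrated-GN-pub} applies \eqref{eq:GN-pub} on balls of radius $2r_0$ with $r_0$ small. It is the factor $r_0^{-2/\qc}$ that, after raising to the power $\qc$ and summing over $N\le C_dr_0^{-d}$ balls, produces $r_0^{-(d+2)}$ and hence the $r_0^{-\gp}$ in \eqref{eq:integrated-GN-precise-pub}.

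With this one-line correction your argument proves the lemma as stated and coincides with the paper's proof.
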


\begin{proof}
By using Gagliardo--Nirenberg interpolation
inequality \cite{Nirenberg1966},  we obtain 
\begin{equation}\label{eq:GN-base-pub}
 \|Dv\|_{L^{\gamma\qc}(B_1)}
 \le C\|D^2v\|_{L^{\qc}(B_1)}^\theta
       [v]_{C^{0,\ac}(B_1)}^{1-\theta}
      +C[v]_{C^{0,\ac}(B_1)},
\end{equation}
where \(\theta\) is determined by
\[
 \frac1{\gamma\qc}-\frac1d
 =\theta\left(\frac1{\qc}-\frac2d\right)
  -(1-\theta)\frac{\ac}{d}.
\]
Using \eqref{eq:critical-exponents-pub1}, \eqref{eq:critical-exponents-pub2} and \eqref{eq:critical-exponents-pub3}, one obtains
\(\theta=1/\gamma\).  Raising \eqref{eq:GN-base-pub} to the power
\(\gamma\) proves the desired estimate in $B_1$.

For $B_r$, set \(w(y)=v(x_0+ry)\) on \(B_1\).  Then $\|Dw\|_{L^{\gamma\qc}(B_1)}^\gamma
 =r^{\gamma-d/\qc}\|Dv\|_{L^{\gamma\qc}(B_r)}^\gamma, $
and
\[
 [w]_{C^{0,\ac}(B_1)}=r^\ac[v]_{C^{0,\ac}(B_r)},
 \qquad
 \|D^2w\|_{L^{\qc}(B_1)}
 =r^{2-d/\qc}\|D^2v\|_{L^{\qc}(B_r)}.
\]
%The identity \(\ac(\gamma-1)+2=\gamma\) shows that the first term is
%scale invariant.  For the lower-order term, using
%\(\gamma(1-\ac)=\gp=(d+2)/\qc\), we obtain
%\[
% r^{\ac\gamma-(\gamma-d/\qc)}=r^{-2/\qc}.
%\]
{Apply the $\gamma$-th power of \eqref{eq:GN-base-pub} to $w$,
substitute these scaling identities and divide by $r^{\gamma-d/\qc}$.
Since $\alpha_c(\gamma-1)+2=\gamma$ and
$\alpha_c\gamma-\gamma+d/\qc=-2/\qc$, this yields \eqref{eq:GN-pub}.}

\end{proof}

As a consequence of Lemma~\ref{lem:GN-pub}, we obtain the following
corollary.
\begin{lemma}
\label{lem:integrated-GN-pub}
Let \(I\Subset(0,T)\), \(u\in W^{2,1}_{\qc}(\T^d\times I)\), and
suppose that, for some \(r_0>0\) and \(\sigma>0\),
\begin{equation}\label{eq:small-spatial-holder-pub}
 \operatorname*{ess\,sup}_{t\in I}
 \sup_{0<\operatorname{dist}(x,y)\le4r_0}
 \frac{|u(x,t)-u(y,t)|}{\operatorname{dist}(x,y)^{\ac}}
 \le\sigma.
\end{equation}
Assume   that \(4r_0\) is smaller than the
injectivity radius of \(\T^d\).  Then
\begin{equation}\label{eq:integrated-GN-precise-pub}
 \bigl\||Du|^\gamma\bigr\|_{L^{\qc}(\T^d\times I)}
 \le C\sigma^{\gamma-1}
       \|D^2u\|_{L^{\qc}(\T^d\times I)}
      +C T^{1/\qc}r_0^{-\gp}\sigma^\gamma,
\end{equation}
where \(C=C(d,\gamma)\). 
\end{lemma}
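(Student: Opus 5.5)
The plan is to derive \eqref{eq:integrated-GN-precise-pub} from the ball version, Lemma~\ref{lem:GN-pub}, applied at radius \(r_0\) on each time slice. We then sum over a finite-overlap covering of \(\T^d\) by balls and integrate in time. Fix \(t\in I\) outside a null set such that \(u(\cdot,t)\in W^{2,\qc}(\T^d)\) (true for a.e.\ \(t\) by Fubini) and \eqref{eq:small-spatial-holder-pub} holds at time \(t\). Choose points \(x_1,\dots,x_N\in\T^d\) such that the balls \(B_{r_0}(x_i)\) cover \(\T^d\) and each point lies in at most \(M=M(d)\) of them. A standard maximal-packing argument gives such a family, with \(N\le C(d)r_0^{-d}\) since \(|\T^d|=1\).

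Since \(4r_0\) is below the injectivity radius, each \(B_{r_0}(x_i)\) is isometric to a Euclidean ball. Any two points \(x,y\) in it satisfy \(\operatorname{dist}(x,y)\le 2r_0\le 4r_0\), so \eqref{eq:small-spatial-holder-pub} gives \([u(\cdot,t)]_{C^{0,\ac}(B_{r_0}(x_i))}\le\sigma\). Lemma~\ref{lem:GN-pub} then yields
\[
\|Du(\cdot,t)\|_{L^{\gamma\qc}(B_{r_0}(x_i))}^{\gamma}\le C\sigma^{\gamma-1}\|D^2u(\cdot,t)\|_{L^{\qc}(B_{r_0}(x_i))}+Cr_0^{-2/\qc}\sigma^{\gamma}.
\]
Note that \(\||Du|^\gamma\|_{L^{\qc}}=\|Du\|_{L^{\gamma\qc}}^{\gamma}\). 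We raise both sides to the power \(\qc\), using \((a+b)^{\qc}\le 2^{\qc-1}(a^{\qc}+b^{\qc})\), and sum over \(i\). The covering property gives
\[
\int_{\T^d}|Du|^{\gamma\qc}\,dx\le\sum_i\int_{B_{r_0}(x_i)}|Du|^{\gamma\qc}\,dx,
\]
and finite overlap gives
\[
\sum_i\|D^2u(\cdot,t)\|^{\qc}_{L^{\qc}(B_{r_0}(x_i))}\le M\|D^2u(\cdot,t)\|^{\qc}_{L^{\qc}(\T^d)}.
\]
The constant terms add up to \(N\,C\,r_0^{-2}\sigma^{\gamma\qc}\le C r_0^{-d-2}\sigma^{\gamma\qc}\). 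Therefore
\[
\||Du(\cdot,t)|^{\gamma}\|^{\qc}_{L^{\qc}(\T^d)}\le C\sigma^{(\gamma-1)\qc}\|D^2u(\cdot,t)\|^{\qc}_{L^{\qc}(\T^d)}+Cr_0^{-(d+2)}\sigma^{\gamma\qc}.
\]

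Next we integrate over \(t\in I\), with \(|I|\le T\), and take the \(\qc\)-th root using \((a+b)^{1/\qc}\le a^{1/\qc}+b^{1/\qc}\). This gives the first term \(C\sigma^{\gamma-1}\|D^2u\|_{L^{\qc}(\T^d\times I)}\) and the second term \(CT^{1/\qc}r_0^{-(d+2)/\qc}\sigma^\gamma\). By \eqref{eq:critical-exponents-pub3}, \((d+2)/\qc=\gp\), which is exactly the claimed \(r_0^{-\gp}\).

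The only point needing care is the bookkeeping of the powers of \(r_0\). The per-ball constant \(r_0^{-2/\qc}\) becomes \(r_0^{-2}\) after raising to the power \(\qc\), and the cardinality \(N\sim r_0^{-d}\) of the covering supplies the missing \(r_0^{-d}\). Together they produce \(r_0^{-(d+2)/\qc}=r_0^{-\gp}\), and this works only because the lower-order term is summed in \(\ell^{\qc}\) rather than linearly. The measurability of \(t\mapsto\|D^2u(\cdot,t)\|_{L^{\qc}}\) and the a.e.\ applicability of Lemma~\ref{lem:GN-pub} follow from Fubini, so there is no essential obstacle.
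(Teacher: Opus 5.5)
Your proposal is correct and follows essentially the same route as the paper. Both arguments apply Lemma~\ref{lem:GN-pub} slice-wise on a finite-overlap covering of \(\T^d\) by \(N\le C_d r_0^{-d}\) balls, raise to the power \(\qc\), sum, integrate over \(I\), and use \((d+2)/\qc=\gp\). The only cosmetic difference is that the paper applies the interpolation inequality on the doubled balls \(2B_j\), where pairwise distances are at most \(4r_0\), whereas you work directly on \(B_{r_0}(x_i)\); this is equally valid.
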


\begin{proof}
By Fubini's theorem, for almost every \(t\in I\) one has
\(u(\cdot,t)\in W^{2,\qc}(\T^d)\), and
\eqref{eq:small-spatial-holder-pub} holds.  Cover
\(\T^d\) by   balls \(B_j=B_{r_0}(x_j)\),
\(j=1,\ldots,N\), such that
\begin{equation}\label{eq:torus-cover-pub}
 \T^d=\bigcup_{j=1}^N B_j,
 \qquad
 \sum_{j=1}^N\mathbf1_{2B_j}\le C_d,
 \qquad
 N\le C_dr_0^{-d}.
\end{equation}
%Because \(4r_0\) is below the injectivity radius, every \(2B_j\) is
%identified isometrically with a Euclidean ball of radius \(2r_0\).
Then, we have any two points of \(2B_j\) are at distance at most \(4r_0\);
hence \eqref{eq:small-spatial-holder-pub} gives
\begin{equation}\label{eq:slice-holder-pub}
 [u(\cdot,t)]_{C^{0,\ac}(2B_j)}\le\sigma.
\end{equation}

Apply Lemma~\ref{lem:GN-pub} on \(2B_j\), we obtain from  \eqref{eq:slice-holder-pub} that
\begin{equation*}%\label{eq:local-slice-GN-pub}
 \|Du(\cdot,t)\|_{L^{\gamma\qc}(2B_j)}^\gamma
 \le C\sigma^{\gamma-1}
       \|D^2u(\cdot,t)\|_{L^{\qc}(2B_j)}
      +Cr_0^{-2/\qc}\sigma^\gamma.
\end{equation*}
Since \(\qc>1\), raising this inequality to the power \(\qc\)   gives
\begin{align}
 \int_{2B_j}|Du(x,t)|^{\gamma\qc}\dd x
 &\le C\sigma^{(\gamma-1)\qc}
       \int_{2B_j}|D^2u(x,t)|^{\qc}\dd x
      +Cr_0^{-2}\sigma^{\gamma\qc}.
 \label{eq:local-slice-powered-pub}
\end{align}
Since the balls \(B_j\) cover \(\T^d\), summing
\eqref{eq:local-slice-powered-pub} and using
\eqref{eq:torus-cover-pub} gives, for almost every \(t\in I\),
\begin{align}
 \int_{\T^d}|Du(x,t)|^{\gamma\qc}\dd x
 &\le
 \sum_{j=1}^N\int_{2B_j}|Du(x,t)|^{\gamma\qc}\dd x
 \notag\\
 &\le C\sigma^{(\gamma-1)\qc}
       \int_{\T^d}|D^2u(x,t)|^{\qc}\dd x
      +Cr_0^{-(d+2)}\sigma^{\gamma\qc}.
 \label{eq:global-slice-powered-pub}
\end{align}
Integrating \eqref{eq:global-slice-powered-pub} over \(I\), we further obtain
\begin{align}
 \int_I\int_{\T^d}|Du|^{\gamma\qc}\dd x\dd t
 &\le C\sigma^{(\gamma-1)\qc}
       \|D^2u\|_{L^{\qc}(\T^d\times I)}^{\qc}
      +CTr_0^{-(d+2)}\sigma^{\gamma\qc}.
 \label{eq:space-time-powered-pub}
\end{align}
 In light of \((d+2)/\qc=\gp\), inequality
\eqref{eq:space-time-powered-pub} implies
\eqref{eq:integrated-GN-precise-pub}.  
\end{proof}

We now state H\"older estimate of (\ref{eq:main-equation-pub}) and remark that it is insufficient to prove the maximal $L^{q_c}$ regularity.
\begin{theorem}[Theorem 3.1 in \cite{Cirant2025}]
\label{thm:critical-holder-pub}
Let \(G\Subset Q_T\).  Under the hypotheses of
Theorem~\ref{thm:main-pub}, there is a constant \(H_G\), independent of
\(u\in\U\), such that
\begin{equation*}%\label{eq:critical-holder-pub}
 [u]_{C^{\ac,\ac/2}(G)}
 :=\sup_{\substack{z,\bar z\in G\\z\ne\bar z}}
 \frac{|u(z)-u(\bar z)|}{\dpd(z,\bar z)^{\ac}}
 \le H_G.
\end{equation*}
\end{theorem}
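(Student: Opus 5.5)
The statement is Theorem 3.1 of \cite{Cirant2025}, so the plan is to reproduce its blow-up proof rather than build anything new; I record it in the notation of this paper. We argue by contradiction. Fix $G\Subset Q_T$ and a slightly larger $G'$ with $G\Subset G'\Subset Q_T$. Suppose there are $u_n\in\U$ with sources $f_n\in\F$ and
\[
M_n:=\sup_{z\neq\bar z\in G}\frac{|u_n(z)-u_n(\bar z)|}{\dpd(z,\bar z)^{\ac}}\to\infty .
\]
Because the $u_n$ are only uniformly bounded, we first replace this seminorm by a weighted, localized one. We multiply the quotient by a power of the parabolic distance to $\partial_p G'$, so that the weighted seminorm is finite for each $n$ and is nearly attained at some pair $(z_n,\bar z_n)$. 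Set $r_n:=\dpd(z_n,\bar z_n)$. The bound $|u_n|\le K$ gives $M_n r_n^{\ac}\lesssim 2K$, hence $r_n\to0$. The weight then guarantees that cylinders of radius $R r_n$ around $z_n$ stay inside $Q_T$, for every fixed $R$, once $n$ is large.

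Next we rescale:
\[
v_n(x,t):=\frac{u_n(x_n+r_nx,\,t_n+r_n^2t)-u_n(z_n)}{M_n r_n^{\ac}} .
\]
By construction $v_n(0)=0$, the normalized pair sits at parabolic distance $1$ with $|v_n(0)-v_n(\bar\zeta_n)|\ge 1/2$, and $[v_n]_{C^{\ac,\ac/2}(Q_R)}\le 2$ on compacts. Using the critical scaling identities $2-\ac=\gamma(1-\ac)=\gp$, the function $v_n$ solves
\[
\partial_tv_n-\Delta v_n+\mu_n|Dv_n|^\gamma=g_n,\qquad \mu_n=M_n^{\gamma-1},\qquad g_n=M_n^{-1}r_n^{\gp}f_n(\cdots).
\]
Here $\|g_n\|_{L^{\qc}(Q_R)}\le M_n^{-1}\sup_{\F}\|f\|_{L^{\qc}}\to0$; equi-integrability gives even more, through the rescaling discussion in the introduction.

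The difficulty is that $\mu_n\to\infty$. To handle this, write $w_n:=\mu_n^{1/(\gamma-1)}v_n=M_nv_n$. Then $w_n$ solves the unit-coefficient equation with source $M_ng_n$, which is bounded in $L^{\qc}$. The H\"older bound, however, is lost. So the natural route is Cirant's instead: an improvement-of-oscillation/iteration argument at the critical level, driven by local sup bounds from the comparison principle. The point is that superquadratic coercivity, together with $u_n\le K$, forces the rescaled functions to obey one-sided bounds of the form
\[
u(x,t)-u(y,s)\le C\,\dpd\bigl((x,t),(y,s)\bigr)^{\ac}+\text{(source contribution)},
\]
obtained from the explicit supersolutions $c|x|^{\gp}$ in time-reversed cylinders. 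This is where $\ac=(\gamma-2)/(\gamma-1)$ enters, as the natural H\"older exponent of the Hamiltonian barrier. The source contribution is controlled by $\|f\|_{L^{\qc}}$ of small cylinders, via the adjoint/Krylov--Safonov-type estimate at scale $q_c$. It is uniformly small precisely because of \eqref{vanishing-omega}.

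With these ingredients the blow-up closes. The one-sided barrier bound is scale invariant and remains of order one for $v_n$. Combined with the bounded oscillation from the H\"older normalization and the $C^{\ac,\ac/2}$ compactness, this gives a locally uniform limit $v$. This $v$ is a viscosity solution on $\R^d\times\R$ of either the heat equation or $v_t-\Delta v+\mu|Dv|^\gamma=0$ (after renormalizing the coefficient), with global $\ac$-H\"older growth, $v(0)=0$ and oscillation at least $1/2$ at distance $1$. The Liouville Lemma~\ref{lem:endpoint-liouville-pub} then forces $v$ to be constant, a contradiction.

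I expect the main obstacle to be exactly the case $\mu_n\to\infty$. In that case the limit is an eikonal-type constraint $|Dv|=0$ in the viscosity sense, and the regularity in $W^{2,1}_{\qc}$ needed by the Liouville lemma is not available. I would handle it directly: show that the limit is nonincreasing in time and spatially constant, using the barrier bounds, which again contradicts the nontrivial oscillation. This is the delicate part of \cite{Cirant2025}, and I would rely on that reference for it.
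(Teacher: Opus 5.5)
\textbf{The blow-up you set up cannot close, and the step you defer is the whole theorem.}

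At the critical scaling, the $\ac$-seminorm and the Hamiltonian coefficient are locked together. Normalizing by $M_n$ forces $\mu_n=M_n^{\gamma-1}\to\infty$. This is not one case among several; it is the contradiction hypothesis itself. Consequences:
\begin{itemize}
\item The limits ``heat equation or $v_t-\Delta v+\mu|Dv|^\gamma=0$'' never arise.
\item Lemma~\ref{lem:endpoint-liouville-pub}, which needs a finite $\mu$ and $W^{2,1}_{\qc,\mathrm{loc}}$ regularity, is never applicable.
\item Everything rests on the divergent case.
\end{itemize}

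In that case your proposed contradiction is false. Testing the equation against $\phi\ge0$ does give $\mu_n\int|Dv_n|^\gamma\phi\le C_\phi$, hence $Dv=0$. The inequality $\partial_tv_n-\Delta v_n\le g_n$ then makes $v$ nonincreasing in $t$. Since $Dv=0$, the oscillation $|v(0,0)-v(\bar\zeta)|\ge 1/2$ surviving in the limit is necessarily temporal. Now take $v(x,t)=-\operatorname{sign}(t)\,|t|^{\ac/2}$. It is spatially constant, nonincreasing, globally $\ac$-H\"older with respect to $\dpd$, and satisfies $v(0,0)=0$, $|v(0,1)|=1$. So spatial constancy plus monotonicity contradicts nothing.

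What is missing is a quantitative bound on how fast a solution can \emph{decrease} in time, of the form $u(x,t)-u(x,t+h)\le C(M_x)\,h^{\ac/2}+(\text{source term})$, where $M_x$ is the spatial $\ac$-seminorm. The heat-subsolution inequality only limits increase. The decrease bound must use the supersolution side together with coercivity, for example comparison with a Hopf--Lax-type barrier, or an optimal-control/adjoint argument. There, a displacement $D$ over time $h$ costs about $D^{\gp}h^{1-\gp}$; balancing this against the gain $M_xD^{\ac}$ gives a loss of order $M_x^{\gamma/2}h^{\ac/2}$.

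The $L^{\qc}$ source must be absorbed at small scales through \eqref{vanishing-omega}. This is needed both here and in the uniform spatial estimate, which is itself nontrivial for unbounded $f$. With these two estimates in hand, no blow-up is needed at all. This is consistent with \cite{Cirant2025} formulating the result through separate spatial and temporal (weighted) seminorms.

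\textbf{Smaller points.}
\begin{itemize}
\item The profile balancing diffusion against $|D\cdot|^\gamma$ is $|x|^{\ac}$ (from $\beta-2=\gamma(\beta-1)$), not $|x|^{\gp}$. The latter is the Lagrangian cost profile, relevant only to the time estimate above.
\item A bound $u(z)-u(\bar z)\le C\dpd(z,\bar z)^{\ac}$ with universal $C$ becomes $(C/M_n)\dpd^{\ac}$ for $v_n$. It vanishes rather than ``remaining of order one''.
\item Krylov--Safonov-type estimates need roughly $L^{d+1}$ data, which $\qc$ need not provide ($\qc<d+1$ whenever $\gamma<d+2$). For the heat operator the usable tools are the potential and adjoint estimates, available because $\qc>(d+2)/2$.
\end{itemize}

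\textbf{Comparison with the paper.} The paper does not reprove this statement. It imports \cite[Theorem~3.1]{Cirant2025} at $q=\qc$, and the only work is a reduction:
\begin{itemize}
\item reverse time to match the backward equation treated there;
\item note that the weights in the separate spatial and temporal seminorms are bounded below on $G\Subset Q_T$;
\item combine them via $|u(x,t)-u(y,s)|\le|u(x,t)-u(y,t)|+|u(y,t)-u(y,s)|$.
\end{itemize}
You also end by relying on that reference for the decisive case, so this reduction is what your argument actually amounts to. The blow-up scaffolding around it adds nothing and, as written, contains the gap described above.
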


%This is the interior version of \cite[Theorem 3.1]{Cirant2025} at
%\(q=(d+2)/\gamma'\), after reversing time.  The theorem in
%\cite{Cirant2025} is stated with separate weighted spatial and temporal
%seminorms.  On \(G\Subset Q_T\), those weights are bounded away from zero,
%and
%\[
 %|u(x,t)-u(y,s)|
 %\le |u(x,t)-u(y,t)|+|u(y,t)-u(y,s)|
%\]
%gives \eqref{eq:critical-holder-pub}.

Moreover, we prove little-H\"older estimate of function $u\in W^{2,1}_{q_c}$.
\begin{lemma}
\label{lem:individual-little-pub}
Let \(G\Subset Q_T\) and \(u\in W^{2,1}_{\qc}(Q_T)\).  Then
\begin{equation*}%\label{eq:individual-little-pub}
 \lim_{r\downarrow0}
 \sup_{\substack{z,\bar z\in G\\0<\dpd(z,\bar z)\le r}}
 \frac{|u(z)-u(\bar z)|}{\dpd(z,\bar z)^{\ac}}=0.
\end{equation*}
\end{lemma}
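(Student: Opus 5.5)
The plan is to deduce the little-Hölder property from the critical embedding of $W^{2,1}_{\qc}$ into $C^{\ac,\ac/2}$, combined with the absolute continuity of the integral of $|D^2u|^{\qc}+|u_t|^{\qc}$. The key observation is that the embedding constant is scale invariant. For $u\in W^{2,1}_{\qc}(Q_\rho(z_0))$ the local parabolic Morrey--Sobolev estimate should give
\[
\sup_{z,\bar z\in Q_{\rho/2}(z_0)}\frac{|u(z)-u(\bar z)|}{\dpd(z,\bar z)^{\ac}}
\le C\Bigl(\|u_t\|_{L^{\qc}(Q_\rho(z_0))}+\|D^2u\|_{L^{\qc}(Q_\rho(z_0))}\Bigr)+C\rho^{-\ac}\,\mathrm{osc}\text{-type lower order terms}.
\]
Here $\ac=2-(d+2)/\qc\in(0,1)$ is exactly the Hölder exponent of the (non-compact but continuous) embedding. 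I will obtain this estimate by proving it on $Q_1$ for functions with vanishing affine-in-$x$ part and then rescaling by $v(y,s)=u(x_0+\rho y,t_0+\rho^2 s)$. Under this rescaling the seminorm $[\cdot]_{C^{\ac,\ac/2}}$ and the quantities $\|v_t\|_{L^{\qc}}$ and $\|D^2 v\|_{L^{\qc}}$ scale identically, namely by $\rho^{\ac}$, precisely because $\ac=2-(d+2)/\qc$.

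The lower order part needs care, since $u-\ell$ for an affine function $\ell(x)=a+b\cdot x$ has the same $u_t$ and $D^2u$. The Hölder quotient of $\ell$ over distance $\dpd\le r$ is at most $|b|r^{1-\ac}$. I therefore plan to work on a fixed cylinder as follows. First, cover $G$ by finitely many cylinders $Q_{\rho}(z_i)$ with $Q_{2\rho}(z_i)\subset Q_T$, where $\rho\le\frac12\operatorname{dist}_{\mathrm p}(G,\partial Q_T)$. Second, for $z,\bar z\in G$ with $\dpd(z,\bar z)\le r\le\rho/4$, both points lie in some $Q_{r'}(w)$ with $r'=2r$ and $Q_{2r'}(w)\subset Q_T$. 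Third, apply the scaled Campanato/Morrey estimate on $Q_{2r'}(w)$ to $u-\ell_w$, where $\ell_w$ is the spatial affine Taylor polynomial, i.e. $a=$ the mean of $u$ and $b=$ the mean of $Du$ over $Q_{2r'}(w)$. This gives
\[
\frac{|u(z)-u(\bar z)|}{\dpd(z,\bar z)^{\ac}}\le C\bigl(\|u_t\|_{L^{\qc}(Q_{2r'}(w))}+\|D^2u\|_{L^{\qc}(Q_{2r'}(w))}\bigr)+C|b|\,r^{1-\ac}.
\]
The Poincaré inequality in $W^{2,1}_{\qc}$ controls $u-\ell_w$ by the second-order norm with scale-consistent powers. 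The term $|b|$ is bounded by $Cr^{-(d+2)/\qc}\|Du\|_{L^{\qc}(Q_{2r'}(w))}$. Even better, since $\gamma\qc>d+2$ (as $\gamma>2$), the Sobolev embedding $Du\in L^{\gamma\qc}$ from Lemma~\ref{lem:preliminary-linear-estimates} gives $|b|\le Cr^{-(d+2)/(\gamma\qc)}\|Du\|_{L^{\gamma\qc}(Q_{2r'}(w))}$. Then
\[
|b|\,r^{1-\ac}\le C\,r^{1-\ac-1/(\gamma-1)}\|Du\|_{L^{\gamma\qc}(Q_{2r'}(w))},
\]
and since $1-\ac=1/(\gamma-1)$ the exponent vanishes. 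The last factor therefore tends to $0$ uniformly in $w$ as $r\to0$.

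To conclude, both right-hand terms are integrals of fixed $L^1$ functions, namely $|u_t|^{\qc}+|D^2u|^{\qc}$ and $|Du|^{\gamma\qc}$, over sets of measure $\le Cr^{d+2}$. By absolute continuity of the integral they tend to zero uniformly in the center $w$ as $r\downarrow0$, which gives the claim. The main obstacle is the first step, namely proving the scale-invariant Campanato-type estimate $[w]_{C^{\ac,\ac/2}(Q_{1/2})}\le C(\|w_t\|_{L^{\qc}(Q_1)}+\|D^2w\|_{L^{\qc}(Q_1)}+\|Dw\|_{L^{\gamma\qc}(Q_1)})$. I would get it either from the known embedding $W^{2,1}_{\qc}(Q_1)\hookrightarrow C^{\ac,\ac/2}(\overline{Q_{1/2}})$ of \cite{LadyzhenskayaSolonnikovUraltseva1968} applied to $w-\ell$ with a Poincaré inequality, or directly by a Campanato iteration on subcylinders.
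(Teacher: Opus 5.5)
Your argument is correct, but it takes a different route from the paper.

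\textbf{The paper's route.} The paper's proof is qualitative. It picks $G\Subset G'\Subset Q_T$ and approximates $u$ in $W^{2,1}_{\qc}(G')$ by a smooth $\phi$. The fixed-domain Morrey embedding $W^{2,1}_{\qc}(G')\hookrightarrow C^{\ac,\ac/2}(G)$ then gives $[u-\phi]_{C^{\ac,\ac/2}(G)}<\varepsilon/2$. Since $\phi$ is Lipschitz in $x$ and $t$, its $\ac$-quotient at scales $\le r$ is at most $\|D\phi\|_\infty r^{1-\ac}+\|\phi_t\|_\infty r^{2-\ac}$.

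\textbf{Your route.} You prove a scale-invariant local Morrey estimate and conclude by absolute continuity of the integral. What makes this work is the observation you found: $\|Dw\|_{L^{\gamma\qc}}$ rescales exactly like $[w]_{C^{\ac,\ac/2}}$, $\|w_t\|_{L^{\qc}}$ and $\|D^2w\|_{L^{\qc}}$, because $1-\ac=(d+2)/(\gamma\qc)=1/(\gamma-1)$.

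\textbf{The step you call the main obstacle is easy.} It needs neither the affine correction nor a parabolic Poincar\'e inequality for $Du$. The inequality you invoke in your third step, $\|Du-(Du)_{Q_\rho}\|_{L^p(Q_\rho)}\le C\rho(\|D^2u\|_{L^p(Q_\rho)}+\|u_t\|_{L^p(Q_\rho)})$, is true, but it is a nontrivial lemma you would have to cite. Instead:
\begin{enumerate}[label=(\alph*)]
\item apply the Morrey embedding on $Q_1$ to $w-(w)_{Q_1}$;
\item bound $\|w-(w)_{Q_1}\|_{L^{\qc}(Q_1)}$ by $\|Dw\|_{L^{\qc}(Q_1)}+\|w_t\|_{L^{\qc}(Q_1)}$, using the ordinary Poincar\'e inequality on the convex set $Q_1\subset\R^{d+1}$;
\item use $\|Dw\|_{L^{\qc}(Q_1)}\le C\|Dw\|_{L^{\gamma\qc}(Q_1)}$.
\end{enumerate}
Since $\qc^\sharp=\gamma\qc$, Lemma~\ref{lem:preliminary-linear-estimates} gives $Du\in L^{\gamma\qc}_{\mathrm{loc}}$. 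After rescaling, for $z,\bar z\in G$ with $0<\dpd(z,\bar z)\le r$ and $r$ small, this yields
\[
\frac{|u(z)-u(\bar z)|}{\dpd(z,\bar z)^{\ac}}
\le C\sup_{z_0\in G}\Bigl(\Energy\bigl(u;Q_{4r}(z_0)\bigr)^{1/\qc}+\Energy\bigl(u;Q_{4r}(z_0)\bigr)^{1/(\gamma\qc)}\Bigr),
\]
with $\Energy$ as in \eqref{eq:critical-energy-pub}.

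\textbf{What each route buys.} The paper's route is brief: it needs only density and the fixed embedding. Yours gives a quantitative modulus controlled by the local critical energy. It is therefore uniform over any family whose densities $|u_t|^{\qc}+|D^2u|^{\qc}+|Du|^{\gamma\qc}$ are uniformly equi-integrable. This is a kind of converse to Section~\ref{sect5}, where non-concentration of the critical energy is derived from the uniform little-H\"older estimate. Lemma~\ref{lem:bad-scale-selection-pub} only needs the single-function statement, so either proof suffices there.
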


\begin{proof}
Choose a smooth open set \(G'\) with \(G\Subset G'\Subset Q_T\).  Since
\(\qc>(d+2)/2\) and \(\ac=2-(d+2)/\qc\in(0,1)\), the {continuous parabolic Morrey embedding} is
\begin{equation}\label{eq:morrey-pub}
 W^{2,1}_{\qc}(G')\hookrightarrow C^{\ac,\ac/2}(G).
\end{equation}
Fix \(\varepsilon>0\), by the density property, we choose
\(\phi\in C^\infty(G')\) such that
\[
 \|u-\phi\|_{W^{2,1}_{\qc}(G')}
 <\frac{\varepsilon}{2C_M},
\]
where \(C_M\) is a constant obtained in \eqref{eq:morrey-pub}.
Thus,
\begin{equation}\label{eq:smooth-approx-holder-pub}
 [u-\phi]_{C^{\ac,\ac/2}(G)}<\varepsilon/2.
\end{equation}
For \(z=(x,t),\bar z=(y,s)\in G\), the smooth approximation implies 
\begin{align}\label{divideby}
 |\phi(z)-\phi(\bar z)|
 \le\|D\phi\|_\infty|x-y|+\|\phi_t\|_\infty|t-s|.
\end{align}
When \(\dpd(z,\bar z)\le r\le1\), division (\ref{divideby}) by
\(\dpd(z,\bar z)^{\ac}\) yields
\[
 \frac{|\phi(z)-\phi(\bar z)|}{\dpd(z,\bar z)^{\ac}}
 \le\|D\phi\|_\infty r^{1-\ac}
     +\|\phi_t\|_\infty r^{2-\ac}.
\]
Choosing \(r\) so that the right-hand side above is less than
\(\varepsilon/2\), and combining this with
\eqref{eq:smooth-approx-holder-pub} gives us the desired vanishing property.
\end{proof}

\section{Uniform little-H\"older estimate }
\label{sec:first-scale-pub}
In this section, we establish a uniform little-H\"older estimate for the
solutions of \eqref{eq:main-equation-pub} by using a blow-up argument.
We begin with the preliminary  lemma. Here, we fix four open sets
\begin{equation}\label{eq:nested-sets-pub}
 G_0\Subset G_1\Subset G_2\Subset G_3\Subset Q_T,
\end{equation}
and obtain
\begin{lemma} 
\label{lem:bad-scale-selection-pub}
Let \(G_0\Subset G_1\Subset Q_T\). Suppose that there exist
\(\delta>0\), a sequence \(s_n\downarrow0\), functions \(u_n\in\mathcal U\) 
and pairs \(z_n^0,\widehat z_n^0\in G_0\) satisfying
\begin{equation}\label{eq:initial-bad-pair-pub}
0<d_p(z_n^0,\widehat z_n^0)\le s_n,
\qquad
|u_n(z_n^0)-u_n(\widehat z_n^0)|
\ge
\delta\,d_p(z_n^0,\widehat z_n^0)^{\alpha_c}.
\end{equation}
After passing to a subsequence, independently of
\(\varepsilon\), then  for every fixed
\(0<\varepsilon<\delta\), there exist pairs
\(z_n,\widehat z_n\in G_1\) depending on \(\varepsilon\) such that  with $r_n:=d_p(z_n,\widehat z_n),$
one has $0<r_n\le s_n$,
$Q_{nr_n}(z_n)\Subset G_1$
for all sufficiently large \(n\), and
\begin{equation}\label{eq:selected-bad-pair-pub}
|u_n(z_n)-u_n(\widehat z_n)|
\ge
\varepsilon r_n^{\alpha_c}.
\end{equation}
Moreover,
\begin{equation}\label{eq:selected-local-holder-pub}
\frac{|u_n(z)-u_n(\bar z)|}
     {d_p(z,\bar z)^{\alpha_c}}
<\varepsilon
\end{equation}
whenever $z,\bar z\in Q_{nr_n}(z_n)$ and
 $0<d_p(z,\bar z)\le\frac{r_n}{2}.$

\end{lemma}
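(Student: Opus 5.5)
This is a standard point-picking argument, whose purpose is to maximize a weighted Hölder quotient. The uniform Hölder bound of Theorem~\ref{thm:critical-holder-pub} on $G_1$ gives $H:=H_{G_1}<\infty$. Set $\rho:=\operatorname{dist}_p(G_0,\partial G_1)>0$ and pass once to a subsequence with $s_n\le\rho/2$; this subsequence does not depend on $\varepsilon$. Fix $0<\varepsilon<\delta$. The plan is to select, among all bad pairs at scale at most $s_n$ near $z_n^0$, one whose distance is essentially maximal, and then exploit the maximality.

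For each $n$ consider the set $\mathcal B_n$ of pairs $(z,\widehat z)$ with $z,\widehat z\in G_1$ and $0<d_p(z,\widehat z)\le s_n$. We also require $|u_n(z)-u_n(\widehat z)|\ge\varepsilon\,d_p(z,\widehat z)^{\ac}$ and impose the localization $d_p(z,z_n^0)\le K_n\,d_p(z,\widehat z)$, where $K_n$ is a slowly growing parameter, for instance $K_n:=\tfrac{1}{2}\min\{n,\ \rho/(4s_n)\}^{1/2}$ or a similar choice. The set $\mathcal B_n$ is nonempty because it contains $(z_n^0,\widehat z_n^0)$, since $\varepsilon<\delta$. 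Let $R_n:=\sup_{\mathcal B_n}d_p(z,\widehat z)\le s_n$, and pick $(z_n,\widehat z_n)\in\mathcal B_n$ with $r_n\ge R_n/2$. We do not need exact maximization. Continuity of $u_n$ (Morrey) ensures everything is measurable and well defined, but only the sup is used. Then \eqref{eq:selected-bad-pair-pub} holds and $r_n\le s_n$.

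Two things remain to check, and the first is containment. We have $d_p(z_n,z_n^0)\le K_nr_n\le K_ns_n$, and $Q_{nr_n}(z_n)$ has parabolic radius comparable to $nr_n$. The danger is that $r_n$ is not small relative to $1/n$. To guard against this, strengthen the scale restriction in $\mathcal B_n$ to $d_p\le \min\{s_n,\rho/(4n)\}$, which is allowed since this still tends to zero and, after replacing $s_n$ by this smaller quantity, the original pair still qualifies. The catch is that the original pair may have $d_p(z_n^0,\widehat z_n^0)>\rho/(4n)$. I would fix this by passing to a subsequence, again independent of $\varepsilon$, so that $s_n\le\rho/(8n)$; this is possible because $s_n\downarrow0$, after relabeling indices. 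Then $Q_{nr_n}(z_n)\subset \{d_p(\cdot,z_n^0)\le K_nr_n+2nr_n\}\subset\{d_p(\cdot,G_0)<\rho\}\Subset G_1$.

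The second check is the local property \eqref{eq:selected-local-holder-pub}, and it is the main obstacle. Take $z,\bar z\in Q_{nr_n}(z_n)$ with $0<d_p(z,\bar z)\le r_n/2$, and suppose the quotient is at least $\varepsilon$. Then $(z,\bar z)$ is a bad pair at scale $\le s_n$. If it satisfied the localization $d_p(z,z_n^0)\le K_n d_p(z,\bar z)$, maximality would force $d_p(z,\bar z)\le R_n\le 2r_n$, which gives no contradiction with $\le r_n/2$. So maximality must be arranged differently. I would instead pick $(z_n,\widehat z_n)$ to nearly \emph{minimize} a penalized functional, or iterate: start at the initial pair, and whenever a bad pair of at most half the distance exists within $Q_{nr}(z)$, jump to it. Each jump halves $r$ and moves the base point by at most $2nr$, so the total displacement is a geometric series bounded by $4n\,d_p(z_n^0,\widehat z_n^0)\le 4ns_n\le\rho/2$. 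Hence all iterates stay in $G_1$ and the process can continue. Since the Hölder quotient of $u_n\in W^{2,1}_{\qc}$ vanishes at small scales on $G_1$ by Lemma~\ref{lem:individual-little-pub}, bad pairs cannot exist below some scale depending on $n$. The iteration therefore terminates after finitely many steps, at a pair with no bad subpair of half size in $Q_{nr_n}(z_n)$, which is exactly \eqref{eq:selected-local-holder-pub}. This terminating halving iteration is the step I regard as the heart of the proof, and it replaces the sup-selection described above.
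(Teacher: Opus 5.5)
Your final argument is correct and is essentially the paper's proof. It uses a subsequence, independent of $\varepsilon$, making $ns_n$ small, so that the base-point displacement $4ns_n$ plus the cylinder radius $2ns_n$ stays below the buffer $\rho$; the terminating halving iteration inside $Q_{nr}(z)$; and termination via Lemma~\ref{lem:individual-little-pub} applied to each fixed $u_n$. The preliminary sup-selection with the localization parameter $K_n$, and the appeal to Theorem~\ref{thm:critical-holder-pub}, are unnecessary and can be dropped.
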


\begin{proof}
Since \(s_n\downarrow0\), after passing to a subsequence,
we may assume that
\begin{equation}\label{eq:nsn-zero-pub}
ns_n\rightarrow0.
\end{equation}
This subsequence is chosen independently of \(\varepsilon\).

Fix \(0<\varepsilon<\delta\), and set $r_n^0:=d_p(z_n^0,\widehat z_n^0).$
By using \eqref{eq:initial-bad-pair-pub}, we have
\begin{equation}\label{eq:initial-epsilon-bad-pub}
0<r_n^0\le s_n,
\qquad
|u_n(z_n^0)-u_n(\widehat z_n^0)|
\ge
\delta(r_n^0)^{\alpha_c}
>
\varepsilon(r_n^0)^{\alpha_c}.
\end{equation}

We first ensure that all the cylinders used below remain inside \(G_1\).
Define
\[
\rho_*:=
\inf\left\{
d_p(z,\bar z):
z\in\overline{G_0},\
\bar z\in
(\mathbb T^d\times\mathbb R)\setminus G_1
\right\}.
\]
Since \(\overline{G_0}\subset G_1\), with
\(\overline{G_0}\) compact and \(G_1\) open, one has $\rho_*>0.$
In light of \eqref{eq:nsn-zero-pub},  we
may assume that
\begin{equation}\label{eq:buffer-condition-pub}
6ns_n<\rho_*
\qquad\text{for every }n.
\end{equation}
We now construct the  pairs recursively. Suppose that $(z_n^k,\widehat z_n^k,r_n^k)$ are given. Set
\begin{align}
\mathcal A_n^k
:=
\Bigl\{
(z,\bar z)\in
Q_{nr_n^k}(z_n^k)\times Q_{nr_n^k}(z_n^k):
\;&0<d_p(z,\bar z)\le\tfrac12r_n^k,
\notag\\[-1mm]
&|u_n(z)-u_n(\bar z)|
\ge
\varepsilon d_p(z,\bar z)^{\alpha_c}
\Bigr\}.
\label{eq:bad-pair-set-pub}
\end{align}
If \(\mathcal A_n^k=\varnothing\), {then we stop}. If
\(\mathcal A_n^k\ne\varnothing\), choose
\begin{equation*}
(z_n^{k+1},\widehat z_n^{k+1})\in\mathcal A_n^k,
\qquad
r_n^{k+1}
:=
d_p(z_n^{k+1},\widehat z_n^{k+1}).
\end{equation*}
By the definition of \(\mathcal A_n^k\),
\begin{equation}\label{eq:recursive-scale-pub}
0<r_n^{k+1}\le\tfrac12r_n^k,
\qquad
|u_n(z_n^{k+1})-u_n(\widehat z_n^{k+1})|
\ge
\varepsilon(r_n^{k+1})^{\alpha_c}.
\end{equation}
Consequently, for every index \(k\),
\begin{equation}\label{eq:geometric-scale-pub}
0<r_n^k\le2^{-k}r_n^0\le2^{-k}s_n.
\end{equation}
Moreover, \eqref{eq:initial-epsilon-bad-pub} and
\eqref{eq:recursive-scale-pub} imply
\begin{equation}\label{eq:all-selected-pairs-bad-pub}
|u_n(z_n^k)-u_n(\widehat z_n^k)|
\ge
\varepsilon(r_n^k)^{\alpha_c}
\qquad\text{for every reached index }k.
\end{equation}
Next, we  prove that every cylinder used in the
construction is compactly contained in \(G_1\). By the definitions of
\(Q_a\) and \(d_p\),
\begin{equation}\label{eq:cylinder-distance-pub}
z\in Q_a(\bar z)
\quad\rightarrow\quad
d_p(z,\bar z)<2a.
\end{equation}
%Indeed, if \(z=(x,t)\) and \(\bar z=(\bar x,\bar t)\), then
%\[
%\operatorname{dist}_{\mathbb T^d}(x,\bar x)<a,
%\qquad
%|t-\bar t|^{1/2}<a,
%\]
%and hence \(d_p(z,\bar z)<2a\).
Since \(z_n^{k+1}\in Q_{nr_n^k}(z_n^k)\),
\eqref{eq:cylinder-distance-pub} gives $d_p(z_n^{k+1},z_n^k)<2nr_n^k.$
It then follows from \eqref{eq:geometric-scale-pub} that
\begin{align*}
d_p(z_n^k,z_n^0)
&\le
\sum_{i=0}^{k-1}d_p(z_n^{i+1},z_n^i)<
2n\sum_{i=0}^{k-1}r_n^i
\le
2ns_n\sum_{i=0}^{k-1}2^{-i}
\le4ns_n.
\end{align*}
Therefore, if \(z\in Q_{nr_n^k}(z_n^k)\), then
\begin{align*}
d_p(z,z_n^0)
&\le
d_p(z,z_n^k)+d_p(z_n^k,z_n^0)
<
2nr_n^k+4ns_n
\le6ns_n
<\rho_*,
\end{align*}
where the last inequality follows from
\eqref{eq:buffer-condition-pub}. Since \(z_n^0\in G_0\), the definition
of \(\rho_*\) implies $Q_{nr_n^k}(z_n^k)\Subset G_1.$
%In particular, all the pairs and cylinders used in the recursive
%construction lie in \(G_1\).

We now prove that, for every fixed \(n\), the construction stops
after finitely many steps. Suppose otherwise,  then
\eqref{eq:geometric-scale-pub} gives $r_n^k\rightarrow0$
$\text{ as }k\to\infty,$
whereas \eqref{eq:all-selected-pairs-bad-pub} gives
\[
\frac{
|u_n(z_n^k)-u_n(\widehat z_n^k)|
}{
d_p(z_n^k,\widehat z_n^k)^{\alpha_c}
}
\ge\varepsilon
\qquad\text{for every }k.
\]
All the  pairs  above belong to \(G_1\), so this contradicts
Lemma~\ref{lem:individual-little-pub}, applied to the fixed function
\(u_n\), which implies
\[
\lim_{\rho\downarrow0}
\sup_{\substack{z,\bar z\in G_1\\
0<d_p(z,\bar z)\le\rho}}
\frac{|u_n(z)-u_n(\bar z)|}
     {d_p(z,\bar z)^{\alpha_c}}
=0.
\]
Thus,  the construction stops  at some finite index \(K_n\).

Set
\begin{equation*}
z_n:=z_n^{K_n},
\qquad
\widehat z_n:=\widehat z_n^{K_n},
\qquad
r_n:=r_n^{K_n}.
\end{equation*}
In light of \eqref{eq:geometric-scale-pub}, we find $0<r_n\le s_n,$
and \eqref{eq:all-selected-pairs-bad-pub}  proves \eqref{eq:selected-bad-pair-pub}.  Moreover, since the construction stops at \(K_n\), one has $\mathcal A_n^{K_n}=\varnothing.$
Thanks to \eqref{eq:bad-pair-set-pub}, we find \eqref{eq:selected-local-holder-pub} holds. Finally, since  $Q_{nr_n}(z_n)\Subset G_1$, the proof is complete.
\end{proof}

We next give the first blow-up sequence.  Letting \(z_n=(x_n,t_n)\) and \(f_n:=f_{u_n}\) given in Lemma \ref{lem:bad-scale-selection-pub}, we define
\begin{equation}\label{eq:first-rescaling-pub}
 v_n(y,s):=
 \frac{u_n(x_n+r_ny,t_n+r_n^2s)-u_n(x_n,t_n)}
      {\varepsilon r_n^{\ac}},
 ~
 g_n(y,s):=
 \varepsilon^{-1}r_n^{\gp}
 f_n(x_n+r_ny,t_n+r_n^2s),
\end{equation}
with the domain $ \Omega_n:=
 \{(y,s):(x_n+r_ny,t_n+r_n^2s)\in G_2\}.$
Since \(z_n\in G_1\Subset G_2\) and \(r_n\to0\), by using (\ref{eq:main-equation-pub}), one has      \(v_n\) satisfies
\begin{equation}\label{eq:first-rescaled-equation-pub}
 (v_n)_s-\Delta v_n+\lambda|Dv_n|^\gamma=g_n
 \quad\text{in }\Omega_n,
 \qquad \lambda:=\varepsilon^{\gamma-1}.
\end{equation}
Let \(\zeta_n\) be the image of \(\widehat z_n\) under this scaling.  Then
\begin{equation}\label{eq:first-normalization-pub}
 v_n(0,0)=0,
 \qquad
 |v_n(\zeta_n)|\ge1,
 \qquad
 \dpd(\zeta_n,(0,0))=1.
\end{equation}
Moreover, \eqref{eq:selected-local-holder-pub} becomes
\begin{equation}\label{eq:first-local-holder-pub}
 |v_n(z)-v_n(\bar z)|
 \le\dpd(z,\bar z)^{\ac}
\end{equation}
whenever \(z,\bar z\in Q_n\) and
\(\dpd(z,\bar z)\le1/2\).  Finally,
Theorem~\ref{thm:critical-holder-pub}, applied on \(G_2\Subset G_3\),
gives
\begin{equation}\label{eq:first-global-holder-pub}
 [v_n]_{C^{\ac,\ac/2}(\Omega_n)}
 \le H_{G_2}/\varepsilon.
\end{equation}
We next prove the following vanishing lemma for the rescaled source term
in \eqref{eq:first-rescaled-equation-pub}.
\begin{lemma}
\label{lem:data-disappearance-pub}
Assume all the hypotheses of Theorem \ref{thm:main-pub} hold. For every fixed \(L>0\), we have as $n\rightarrow \infty,$
\begin{equation}\label{eq:first-data-vanish-pub}
 g_n\rightarrow0
\text{ in }L^{\qc}(Q_L).
\end{equation}
More precisely,
\begin{equation*}
 \sup_{\substack{\xi=(\xi_x,\xi_t),\ 0<\rho\le1\\
 Q_L\ni z\mapsto(\xi_x+\rho z_x,\xi_t+\rho^2z_t)\in\Omega_n}}
 \left\|
 \rho^{\gp}g_n(\xi_x+\rho\,\cdot,\xi_t+\rho^2\,\cdot)
 \right\|_{L^{\qc}(Q_L)}
 \rightarrow0.
\end{equation*}
\end{lemma}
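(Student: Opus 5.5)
The plan is a change of variables that converts each rescaled norm back to an integral of $|f_n|^{\qc}$ over a small set in $Q_T$. The vanishing then follows from the equi-integrability modulus $\omega_{\F}$ in \eqref{vanishing-omega}. The second (uniform) statement implies the first: for large $n$ we have $Q_L\subset\Omega_n$, so we may take $\xi=(0,0)$ and $\rho=1$. I therefore prove the uniform version directly.

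Fix $\xi=(\xi_x,\xi_t)$ and $0<\rho\le1$ such that the map $z\mapsto(\xi_x+\rho z_x,\xi_t+\rho^2z_t)$ sends $Q_L$ into $\Omega_n$. Composing this map with the first rescaling \eqref{eq:first-rescaling-pub} gives
\[
 \rho^{\gp}g_n(\xi_x+\rho y,\xi_t+\rho^2 s)
 =\varepsilon^{-1}(r_n\rho)^{\gp}
 f_n\bigl(x_n+r_n\xi_x+r_n\rho y,\ t_n+r_n^2\xi_t+(r_n\rho)^2 s\bigr).
\]
This is exactly $\varepsilon^{-1}$ times the critically rescaled source $f_{R,w}$ from the introduction, with scale $R=r_n\rho$ and center $w=(x_n+r_n\xi_x,t_n+r_n^2\xi_t)$. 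The cylinder $w+\delta_RQ_L$ lies in $G_2\subset Q_T$, because $Q_L$ is mapped into $\Omega_n$.

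Next I change variables $x=x_n+r_n\xi_x+Ry$, $t=t_n+r_n^2\xi_t+R^2s$. The Jacobian is $R^{d+2}$, and $\gp\qc=d+2$ by \eqref{eq:critical-exponents-pub3}, so the powers of $R$ cancel and
\[
 \bigl\|\rho^{\gp}g_n(\xi_x+\rho\,\cdot,\xi_t+\rho^2\,\cdot)\bigr\|_{L^{\qc}(Q_L)}^{\qc}
 =\varepsilon^{-\qc}\int_{w+\delta_RQ_L}|f_n|^{\qc}\dd x\dd t
 \le\varepsilon^{-\qc}\,\omega_{\F}\bigl((r_n\rho)^{d+2}|Q_L|\bigr).
\]
Here I use $|w+\delta_RQ_L|=R^{d+2}|Q_L|$ and $f_n\in\F$. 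On the torus I regard the domain as a subset of $\T^d\times(0,T)$. A translate of the cylinder may wrap around, but the measure is unchanged, so the bound holds as long as $r_nL<1/2$, which is true for large $n$.

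Since $\omega_{\F}$ is nondecreasing and $\rho\le1$, the right-hand side is bounded by $\varepsilon^{-\qc}\omega_{\F}(r_n^{d+2}|Q_L|)$. This bound is independent of $\xi$ and $\rho$. Finally, $r_n\le s_n\to0$ by Lemma~\ref{lem:bad-scale-selection-pub}, and $\varepsilon$ is fixed, so \eqref{vanishing-omega} sends the bound to $0$ as $n\to\infty$.

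I do not expect a real obstacle. The only point needing care is bookkeeping: the admissibility condition on $(\xi,\rho)$ is exactly what guarantees that the pulled-back region lies inside $Q_T$, so that the modulus $\omega_{\F}$ applies to it.
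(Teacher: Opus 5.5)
Your proposal is correct and follows the same approach as the paper. The change of variables with $\gp\qc=d+2$ turns the rescaled norm into $\varepsilon^{-\qc}\int|f_n|^{\qc}$ over a set of measure at most $C_L(r_n\rho)^{d+2}\le C_L r_n^{d+2}$, which the modulus $\omega_{\F}$ sends to zero uniformly in $(\xi,\rho)$. Your extra checks (the admissible cylinder lands in $G_2$, $\omega_{\F}$ is monotone, and the map is injective once $r_nL<1/2$) make explicit details the paper leaves implicit.
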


\begin{proof}
By using \(\gp\qc=d+2\), we obtain from (\ref{vanishing-omega}) that
\begin{align*}
 \|g_n\|_{L^{\qc}(Q_L)}^{\qc}
 &=\varepsilon^{-\qc}
 \int_{E_{n,L}}|f_n|^{\qc}\dd x\dd t 
 \le\varepsilon^{-\qc}
 \omega_{\F}(C_Lr_n^{d+2})\rightarrow0,
\end{align*}
where \(|E_{n,L}|\le C_Lr_n^{d+2}\).  After the rescaling again, we have  
\begin{align*}
 &\left\|
 \rho^{\gp}g_n(\xi_x+\rho\,\cdot,\xi_t+\rho^2\,\cdot)
 \right\|_{L^{\qc}(Q_L)}^{\qc}
 \notag\\
 &\qquad\le
 \varepsilon^{-\qc}\omega_{\F}
 \bigl(C_L(r_n\rho)^{d+2}\bigr)
 \le\varepsilon^{-\qc}\omega_{\F}(C_Lr_n^{d+2})
 \rightarrow0.
\end{align*}
This finishes the proof of this lemma.
%The bound is independent of the admissible centre \(\xi\) and of
%\(0<\rho\le1\), proving both assertions.
\end{proof}

In the next subsection, we establish
a uniform local energy bound for the blow-up sequence, which provides the
weak compactness needed in the subsequent argument.
\subsection{Local energy estimate}

For a measurable set \(A\) contained in the domain of \(v\), define the
 energy
\begin{equation}\label{eq:critical-energy-pub}
 \Energy(v;A):=
 \int_A\left(
 |v_t|^{\qc}+|D^2v|^{\qc}+|Dv|^{\gamma\qc}
 \right)\dd x\dd t.
\end{equation}
We note that  if $w(y,s)=\rho^{-\ac}
 \bigl(v(\xi_x+\rho y,\xi_t+\rho^2s)-v(\xi)\bigr),$
then $2-\ac=\gp$ and $\gamma(1-\ac)=\gp$ imply $ \Energy(w;Q_L)=\Energy(v;Q_{L\rho}(\xi)).$  The energy estimate of the blow-up sequence is summarized in the following proposition.
%Indeed, \(w_s\) and \(D^2w\) acquire %the factor
%\(\rho^{2-\ac}=\rho^{\gp}\), whereas \(Dw\) acquires the factor
%\(\rho^{1-\ac}\).  After taking the powers in
%\eqref{eq:critical-energy-pub}, each term has factor \(\rho^{d+2}\),
%which cancels the parabolic Jacobian.

%The first-scale sequence in Section~\ref{sec:first-scale-pub} is
%admissible by \eqref{eq:first-local-holder-pub} and
%Lemma~\ref{lem:data-disappearance-pub}.

\begin{proposition}
\label{prop:universal-energy-pub}
Let
\((v_n,g_n,\lambda_n,\Omega_n)\) satisfy
\begin{align}\label{vneqprop33}
(v_n)_t-\Delta v_n+\lambda_n|Dv_n|^\gamma=g_n
\quad\text{in }\Omega_n,
\qquad
0\le\lambda_n\le1,
\end{align}
where   
\(v_n\in W^{2,1}_{q_c,\mathrm{loc}}(\Omega_n)\). {Assume that
every compact
subset of \(\mathbb R^d\times\mathbb R\) is contained in \(\Omega_n\) for all
sufficiently large \(n\).} Assume that, for every
fixed \(L>0\), (\ref{eq:first-local-holder-pub}) holds for all sufficiently large \(n\), whenever $z,\bar z\in Q_L$,
 $d_p(z,\bar z)\le\frac12$
and
\begin{equation}\label{eq:admissible-data-pub}
\sup_{\substack{\xi=(\xi_x,\xi_t),\ 0<\rho\le1\\
(\xi_x+\rho y,\xi_t+\rho^2s)\in\Omega_n\
\text{for every }(y,s)\in Q_L}}
\left\|
\rho^{\gamma'}
g_n(\xi_x+\rho y,\xi_t+\rho^2s)
\right\|_{L^{q_c}_{(y,s)}(Q_L)}
\rightarrow0.
\end{equation}
Then there exists \(M_0=M_0(d,\gamma)>0\) such that  for every fixed
\(R>0\),
\begin{equation}\label{eq:universal-energy-pub}
\sup_{\zeta\in Q_R}
\mathcal E\bigl(v_n;Q_2(\zeta)\bigr)
\le M_0
\end{equation}
for all sufficiently large \(n\).
{The constant \(M_0\) is independent of
\(R\) and the coefficient sequence \((\lambda_n)\subset[0,1]\).}
\end{proposition}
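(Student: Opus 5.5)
We argue by contradiction, using a second, energy-normalized blow-up. Suppose the claim fails for a constant $M_0$ to be fixed later, depending only on $d,\gamma$. Then along a subsequence there are centres $\zeta_n\in Q_R$ with $\mathcal E(v_n;Q_2(\zeta_n))>M_0$. The first step is a concentration selection analogous to Lemma~\ref{lem:bad-scale-selection-pub}, but at the level of energy. Since $v_n\in W^{2,1}_{q_c,\mathrm{loc}}$, for each fixed $n$ the energy $\mathcal E(v_n;Q_\rho(\xi))$ tends to $0$ as $\rho\to0$, uniformly in $\xi$ on compact sets, by absolute continuity of the integral. Hence we can choose $\rho_n\in(0,1]$ and $\xi_n$ near $\zeta_n$ such that $\mathcal E(v_n;Q_{\rho_n}(\xi_n))=\eta_0$ for a small threshold $\eta_0(d,\gamma)$. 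We also want the maximality property $\mathcal E(v_n;Q_{\rho_n}(\xi))\le\eta_0$ for all $\xi$ in a large neighbourhood $Q_{N_n\rho_n}(\xi_n)$ with $N_n\to\infty$; this will be achieved by a recursive doubling selection of the same type as in Lemma~\ref{lem:bad-scale-selection-pub}. We then rescale, setting $w_n(y,s):=\rho_n^{-\alpha_c}\bigl(v_n(\xi_n+\rho_n y,\ldots)-v_n(\xi_n)\bigr)$. By the scale invariance noted before the proposition, $\mathcal E(w_n;Q_1)=\eta_0$ and $\mathcal E(w_n;Q_1(\xi))\le\eta_0$ on $Q_{N_n}$. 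The rescaled equation keeps the coefficient $\lambda_n$, and by \eqref{eq:admissible-data-pub} with $\rho=\rho_n\le1$ the rescaled source tends to $0$ in $L^{q_c}_{\rm loc}$. The H\"older bound \eqref{eq:first-local-holder-pub} persists on scales $\le 1/(2\rho_n)\ge1/2$, so $[w_n]_{C^{\alpha_c,\alpha_c/2}}$ is locally bounded by $1$ on unit scale, and by chaining it has at most $\alpha_c$-growth globally.

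The next step uses the small drift to obtain compactness. Covering by unit cylinders, we have $\|\lambda_n|Dw_n|^{\gamma-1}\|_{L^{d+2}(Q_2(\xi))}^{d+2}\le\|Dw_n\|_{L^{\gamma q_c}(Q_2(\xi))}^{\gamma q_c}\le C\eta_0$, because $\gamma q_c=(d+2)(\gamma-1)$. Choosing $\eta_0$ so that $C\eta_0^{1/(d+2)}\le\kappa_{q_c}$, Lemma~\ref{lem:small-drift-pub}(i) applies with $\mathbf b_n=\lambda_n|Dw_n|^{\gamma-2}Dw_n$. The local energies are uniformly bounded, so $w_n\rightharpoonup w$ weakly in $W^{2,1}_{q_c,\mathrm{loc}}$ and locally uniformly, and $\lambda_n\to\mu\in[0,1]$. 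To upgrade to strong convergence, take differences: $z_{n,m}=w_n-w_m$ satisfies $z_t-\Delta z+\mathbf b\cdot Dz=h$, where $\mathbf b$ is built from the mean-value linearization of $|p|^\gamma$ between $Dw_n$ and $Dw_m$ (still small in $L^{d+2}$), and $h$ collects the source differences and the term $(\lambda_n-\lambda_m)|Dw_m|^\gamma$, which tends to $0$ in $L^{q_c}$. Estimate \eqref{eq:small-drift-estimate-pub} then gives a Cauchy sequence in $W^{2,1}_{q_c}(Q_1(\xi))$, since $z_{n,m}\to0$ in $L^{q_c}$ by local uniform convergence. The critical embedding yields $Dw_n\to Dw$ in $L^{\gamma q_c}_{\rm loc}$, so the limit $w$ solves $w_t-\Delta w+\mu|Dw|^\gamma=0$ on $\R^{d+1}$ with $\mathcal E(w;\overline{Q_1})\ge\eta_0>0$. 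The lower bound passes to the limit by strong convergence on a slightly larger cylinder, which is harmless.

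For the conclusion, $w$ has globally bounded $C^{\alpha_c,\alpha_c/2}$ seminorm. Indeed, scales below $1/2$ are controlled by the inherited bound $\le1$, and on larger scales we use the interior estimate of Theorem~\ref{thm:critical-holder-pub} in its scale-invariant form for $\mu|Dw|^\gamma$ with zero source; alternatively we chain along the inherited local estimate, using that the energy density is uniformly small. Lemma~\ref{lem:endpoint-liouville-pub} then forces $w$ to be constant, contradicting $\mathcal E(w;Q_1)\ge\eta_0$. Therefore the concentration scales $\rho_n$ cannot be selected, and the energy on each $Q_2(\zeta)$ is bounded by the number of unit-scale $\eta_0$-cylinders needed to cover it, giving $M_0=C(d)\eta_0$. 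This depends only on $d,\gamma$ and not on $R$ or $(\lambda_n)$.

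The main obstacle is the selection step. We must ensure simultaneously that the normalized energy is at least $\eta_0$ at unit scale and at most $\eta_0$ on all neighbouring unit cylinders, so that the drift is small everywhere, while the scales $\rho_n$ stay at most $1$ so that \eqref{eq:admissible-data-pub} applies. We must also justify the global $\alpha_c$-H\"older bound needed by Lemma~\ref{lem:endpoint-liouville-pub} once $\rho_n\to0$. I would first try a stopping-time argument on the maximal function $\sup_\xi\mathcal E(v_n;Q_\rho(\xi))$ in $\rho$, combined with a doubling trick to obtain the neighbourhood property.
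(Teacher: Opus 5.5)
\textbf{There is a genuine gap}, at the point you flagged yourself. Your selection produces $\rho_n\in(0,1]$, but nothing forces $\rho_n\to0$, and without that the blow-up limit need not have a global $\ac$-H\"older bound. The only H\"older information is \eqref{eq:first-local-holder-pub}, which holds at distances $\le 1/2$. After rescaling it covers distances $\le 1/(2\rho_n)$, and these become arbitrarily large only when $\rho_n\to0$.

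Neither of your remedies closes this. Chaining steps of length $1/2$ gives $|w(z)-w(\bar z)|\le C\,\dpd(z,\bar z)$, which is linear growth, not $\ac$-growth. And there is no oscillation-free scale-invariant H\"older estimate for the zero-source equation, since $c\cdot x-\mu|c|^\gamma t$ is an entire solution for every $c\in\R^d$.

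In fact the target $M_0=C(d)\eta_0$ is false. Your argument uses only the smallness of $\eta_0$, so it would work for every smaller $\eta_0$ and would give $\limsup_n\sup_{\zeta}\mathcal E(v_n;Q_2(\zeta))=0$. Now take $v_n(y,s)=c\cdot y$ with $\lambda_n=0$, $g_n=0$, $\Omega_n=\R^d\times\R$ and $0<|c|\le 2^{1-\ac}$. This sequence is admissible, because $|c\cdot(x-\bar x)|\le|c|\,\dpd(z,\bar z)\le|c|2^{\ac-1}\dpd(z,\bar z)^{\ac}\le\dpd(z,\bar z)^{\ac}$ whenever $\dpd(z,\bar z)\le1/2$. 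Yet $\mathcal E(v_n;Q_2)=|c|^{\gamma\qc}|Q_2|>0$ for all $n$. Blowing it up at any scale returns a linear function, which is exactly the non-concentrating limit that Lemma~\ref{lem:endpoint-liouville-pub} cannot exclude.

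\textbf{How the paper avoids this.} It proceeds in two steps.

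First, it contradicts \emph{divergence} of the energy rather than a fixed threshold. Suppose $\mathcal E(v_n;Q_R)\to\infty$. Cover $\overline{Q_R}$ by $N_j$ cylinders of radius $a_j\to0$ with $ja_j\to0$, and take $n(j)$ with $\mathcal E(v_{n(j)};Q_R)>jN_j$. Then some $Q_{a_j}(\xi_j^0)$ carries energy $>j>\eta_0$. The halving selection started there gives $\rho_j\le a_j\to0$, with energy $>\eta_0$ on $Q_{\rho_j}(\xi_j)$ and $\le\eta_0$ on every $Q_{\rho_j/2}(\xi)$ with $\dpd(\xi,\xi_j)\le j\rho_j$. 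Now $[w_j]_{C^{\ac,\ac/2}(Q_L)}\le1$ for each fixed $L$ and all large $j$. So the limit has global seminorm $\le1$, and your small-drift compactness and Liouville step go through essentially unchanged. This step only shows that each admissible sequence has locally bounded energy.

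Second, the universal $M_0$, independent of $R$ and $(\lambda_n)$, comes from a separate diagonal argument. If it failed, choose from the $j$-th bad sequence an index $n(j)$ and a centre $\zeta_j$ with $\mathcal E(v^j_{n(j)};Q_2(\zeta_j))>j$. Translate $\zeta_j$ to the origin; the hypotheses survive translation once $n(j)$ is taken large. This yields one admissible sequence with $\mathcal E(\widetilde v_j;Q_2)\to\infty$, contradicting the first step. The resulting $M_0$ is obtained by compactness and is not an explicit multiple of $\eta_0$.
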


\begin{proof}
We first exclude unbounded energy and then
prove that the tail bound is uniform.

\textbf{Step 1: boundedness}   Suppose that, for some fixed \(R\), after extraction,
\begin{equation}\label{eq:energy-divergence-pub}
 \Energy(v_n;Q_R)\rightarrow\infty.
\end{equation}
Fix a number \(0<\eta_0<1\), which  will be chosen below.  Choose
\(a_j\downarrow0\) so that \(ja_j\to0\).  For \(a>0\), define the covering number of
\(\overline{Q_R}\) by
\[
N(Q_R,a)
:=
\min\left\{
N\in\mathbb N:
\begin{array}{l}
\text{there exist }\xi_1,\ldots,\xi_N\in\overline{Q_R}
\text{ such that }
\overline{Q_R}\subset
\bigcup_{\ell=1}^N Q_a(\xi_\ell)
\end{array}
\right\}.
\]
Set $N_j:=N(Q_R,a_j),$ then
\[
N_j\le
C_d\left(1+\frac{R}{a_j}\right)^{d+2}.
\]
Choose centres
\(\xi_{j,1},\ldots,\xi_{j,N_j}\in\overline{Q_R}\) such that  $\overline{Q_R}
\subset
\bigcup_{\ell=1}^{N_j}Q_{a_j}(\xi_{j,\ell}).$
Using \eqref{eq:energy-divergence-pub}, we choose \(n(j)\) sufficiently large that $\mathcal E(v_{n(j)};Q_R)>jN_j.$  We also choose $n(j)$ so large that
$Q_{j+2R+4}\Subset\Omega_{n(j)}$,
the estimate \eqref{eq:first-local-holder-pub} holds on
$Q_{j+2R+4}$, and the quantity in
\eqref{eq:admissible-data-pub} is below $1/j$ for every integer
$1\le L\le j+2R+4$.
Hence,
\[
\mathcal E(v_{n(j)};Q_R)
\le
\sum_{\ell=1}^{N_j}
\mathcal E\bigl(
v_{n(j)};Q_{a_j}(\xi_{j,\ell})
\bigr).
\]
Consequently, there exists
\(\ell_j\in\{1,\ldots,N_j\}\) such that $\mathcal E\bigl(
v_{n(j)};Q_{a_j}(\xi_{j,\ell_j})
\bigr)>j,$
where $\xi_j^0:=\xi_{j,\ell_j}.$    

Starting from $(\xi_j^0,\rho_j^0):=(\xi_j^0,a_j),$
we construct recursively a sequence
\((\xi_j^k,\rho_j^k)\). By the choice of \(\xi_j^0\),
\[
\mathcal E\bigl(v_{n(j)};Q_{\rho_j^0}(\xi_j^0)\bigr)
=
\mathcal E\bigl(v_{n(j)};Q_{a_j}(\xi_j^0)\bigr)
>j>\eta_0
\]
for all sufficiently large \(j\).  Suppose that \((\xi_j^k,\rho_j^k)\) is given. Introduce the set
\begin{equation*}
\mathcal C_j^k
:=
\left\{
\xi:
d_p(\xi,\xi_j^k)\le j\rho_j^k,\quad
\mathcal E\bigl(
v_{n(j)};Q_{\rho_j^k/2}(\xi)
\bigr)>\eta_0
\right\}.
\end{equation*}
If \(\mathcal C_j^k=\varnothing\), the construction stops. Otherwise,
choose $\xi_j^{k+1}\in\mathcal C_j^k$
and set $\rho_j^{k+1}:=\frac{\rho_j^k}{2}.$
By our construction,
\begin{equation}\label{eq:selected-energy-lower-bound-pub}
\mathcal E\bigl(
v_{n(j)};Q_{\rho_j^{k+1}}(\xi_j^{k+1})
\bigr)>\eta_0.
\end{equation}
Moreover, $\rho_j^k=2^{-k}a_j$
for every index.

We first show that all the cylinders used in the construction
are contained in a fixed compact subset of \(\Omega_{n(j)}\). For every
\(k\ge1\),
\begin{align}
d_p(\xi_j^k,\xi_j^0)
&\le
\sum_{i=0}^{k-1}
d_p(\xi_j^{i+1},\xi_j^i)
\le
j\sum_{i=0}^{k-1}\rho_j^i
=
ja_j\sum_{i=0}^{k-1}2^{-i}
\le2ja_j.
\label{eq:energy-centre-displacement-pub}
\end{align}
If \(z\in Q_{\rho_j^k}(\xi_j^k)\), then
\(d_p(z,\xi_j^k)<2\rho_j^k\le2a_j\), and hence
\begin{equation*}
d_p(z,\xi_j^0)
\le
d_p(z,\xi_j^k)+d_p(\xi_j^k,\xi_j^0)
<
2a_j+2ja_j.
\end{equation*}
Since \(ja_j\to0\), for all sufficiently large \(j\), $(2j+2)a_j<1.$
Since \(\xi_j^0\in\overline{Q_R}\), it follows that every cylinder
in the construction is contained in \(Q_{R+1}\). We chose
\(n(j)\) sufficiently large such that $\overline{Q_{R+1}}\Subset\Omega_{n(j)}.$ It shows that all cylinders  are contained in a  compact subset of
\(\Omega_{n(j)}\).

We now prove that the construction stops after finitely many
steps. Fix \(j\) and suppose, by contradiction, that it does not
stop. Then $\rho_j^k=2^{-k}a_j\rightarrow0.$
Furthermore, for \(m>k\),
\begin{align*}
d_p(\xi_j^m,\xi_j^k)
&\le
\sum_{i=k}^{m-1}
d_p(\xi_j^{i+1},\xi_j^i)\le
j\sum_{i=k}^{m-1}\rho_j^i
\le
2j\rho_j^k
\rightarrow0
\qquad\text{as }k\to\infty.
\end{align*}
Hence \((\xi_j^k)_k\) is a Cauchy sequence and therefore there is a  limiting point $\xi_j^\infty\in\overline{Q_{R+1}}
\Subset\Omega_{n(j)}.$  For this fixed \(j\), define the energy 
\[
e_j
:=
|(v_{n(j)})_t|^{q_c}
+|D^2v_{n(j)}|^{q_c}
+|Dv_{n(j)}|^{\gamma q_c}.
\]
Since \(v_{n(j)}\in
W^{2,1}_{q_c,\mathrm{loc}}(\Omega_{n(j)})\) and $W^{2,1}_{q_c,\mathrm{loc}}
\hookrightarrow
W^{1,0}_{\gamma q_c,\mathrm{loc}},$
one has $e_j\in L^1(Q_{R+1}).$
The Lebesgue integral of \(e_j\) is therefore absolutely continuous.
On the other hand, $\bigl|Q_{\rho_j^k}(\xi_j^k)\bigr|
=
|Q_1|(\rho_j^k)^{d+2}
\rightarrow0.$
Taking \(\varepsilon=\eta_0\), we obtain, for all sufficiently large
\(k\),
\[
\mathcal E\bigl(
v_{n(j)};Q_{\rho_j^k}(\xi_j^k)
\bigr)
=
\int_{Q_{\rho_j^k}(\xi_j^k)}e_j
<\eta_0,
\]
which contradicts
\eqref{eq:selected-energy-lower-bound-pub}. Hence the recursive
construction must stop after finitely many steps.

Let \(K_j\) denote the terminal index and set
\[
\xi_j:=\xi_j^{K_j},
\qquad
\rho_j:=\rho_j^{K_j}.
\]
Then
\begin{equation}\label{eq:energy-lower-normalization-pub}
\mathcal E\bigl(v_{n(j)};Q_{\rho_j}(\xi_j)\bigr)>\eta_0,
\end{equation}
whereas the stopping condition
\(\mathcal C_j^{K_j}=\varnothing\) gives
\begin{equation}\label{eq:energy-local-upper-pub}
\sup_{d_p(\xi,\xi_j)\le j\rho_j}
\mathcal E\bigl(
v_{n(j)};Q_{\rho_j/2}(\xi)
\bigr)
\le\eta_0.
\end{equation}
Moreover, \(\rho_j\le a_j\to0\), and
\eqref{eq:energy-centre-displacement-pub} keeps \(\xi_j\) in a fixed
compact set.

Define the second blow-up
\begin{align*}
 &w_j(y,s):=\rho_j^{-\ac}
 \bigl(v_{n(j)}(\xi_{j,x}+\rho_jy,
                 \xi_{j,t}+\rho_j^2s)-v_{n(j)}(\xi_j)\bigr),\nonumber\\~ &k_j(y,s):=\rho_j^{\gp}
 g_{n(j)}(\xi_{j,x}+\rho_jy,
          \xi_{j,t}+\rho_j^2s).
\end{align*}
Then, we have from (\ref{vneqprop33}) that
\begin{equation}\label{eq:second-equation-pub}
 (w_j)_s-\Delta w_j+\lambda_{n(j)}|Dw_j|^\gamma=k_j.
\end{equation}
Then, thanks to \eqref{eq:admissible-data-pub},  one has
\begin{equation}\label{eq:second-data-zero-pub}
 k_j\rightarrow0
 \quad\text{in }L^{\qc}_{\mathrm{loc}}(\R^d\times\R).
\end{equation}
Invoking \eqref{eq:energy-lower-normalization-pub}, we have
\begin{equation}\label{eq:second-energy-lower-pub}
 \Energy(w_j;Q_1)>\eta_0.
\end{equation}
Furthermore, for every fixed \(L\) and all sufficiently large \(j\),
\eqref{eq:energy-local-upper-pub} yields
\begin{equation}\label{eq:second-energy-local-pub}
 \sup_{z\in Q_L}\Energy(w_j;Q_{1/2}(z))\le\eta_0.
\end{equation}

Fix \(L>0\). For all sufficiently large \(j\), choose a covering
\[
Q_L\subset
\bigcup_{\ell=1}^{N_L}Q_{1/2}(\zeta_\ell),
\qquad
N_L\le C_d(1+L)^{d+2},
\]
whose centres satisfy \(d_p(\zeta_\ell,0)\le j\). It follows from
\eqref{eq:second-energy-local-pub} that
\begin{align}
\mathcal E(w_j;Q_L)
&\le
\sum_{\ell=1}^{N_L}
\mathcal E\bigl(w_j;Q_{1/2}(\zeta_\ell)\bigr)
\le N_L\eta_0
\le C_L\eta_0.
\label{eq:second-energy-compact-pub}
\end{align}
Similarly, every cylinder \(Q_2(z)\) can be covered by at most \(N_d\)
cylinders of radius \(1/2\), where \(N_d\) depends only on \(d\).
For \(z\in Q_L\) and all sufficiently large \(j\), the centres of these
covering cylinders satisfy \(d_p(\zeta,0)\le j\). Hence
\begin{equation}\label{eq:second-energy-radius-two-pub}
\sup_{z\in Q_L}
\mathcal E\bigl(w_j;Q_2(z)\bigr)
\le N_d\eta_0
\le C_d\eta_0,
\end{equation}
where \(C_d\) is independent of \(L\).

Since \(\rho_j\to0\),   H\"older estimate
\eqref{eq:first-local-holder-pub} implies, for every fixed \(L\), $
 [w_j]_{C^{\ac,\ac/2}(Q_L)}\le1$
for all large \(j\).  Since \(w_j(0,0)=0\), Arzel\`a--Ascoli and a
diagonal extraction give local uniform convergence to a function \(w\)
with a finite \(\ac\)-H\"older semi-norm.  We also
extract so that \(\lambda_{n(j)}\to\lambda_\infty\in[0,1]\).

We next prove strong  compactness of ${(w_j)}$.  For indices \(j,\ell\), set
\(Z_{j\ell}=w_j-w_\ell\) and
\begin{equation*}
 A_{j\ell}:=\gamma\int_0^1
 |Dw_\ell+\theta(Dw_j-Dw_\ell)|^{\gamma-2}
 \bigl(Dw_\ell+\theta(Dw_j-Dw_\ell)\bigr)\dd\theta.
\end{equation*}
Subtracting the equations gives
\begin{equation*}
 (Z_{j\ell})_t-\Delta Z_{j\ell}
 +\mathbf{B}_{j\ell}\cdot DZ_{j\ell}=R_{j\ell},
\end{equation*}
where
\begin{equation*}
 \mathbf{B}_{j\ell}=\lambda_{n(j)}A_{j\ell},
 \qquad
 R_{j\ell}=k_j-k_\ell
 +(\lambda_{n(\ell)}-\lambda_{n(j)})|Dw_\ell|^\gamma.
\end{equation*}
The identity
\(
 \gamma\qc=(d+2)(\gamma-1)
\)
and \eqref{eq:second-energy-radius-two-pub} imply
\begin{equation*}
 \|\mathbf{B}_{j\ell}\|_{L^{d+2}(Q_2(z))}
 \le C_d\eta_0^{1/(d+2)}
\end{equation*}
for every fixed compact set and all large \(j,\ell\).  Choose
\(\eta_0=\eta_0(d,\gamma)\) so small that the right-hand side is below
\(\kappa_{\qc}\) in Lemma~\ref{lem:small-drift-pub}.  Moreover,
\eqref{eq:second-data-zero-pub}, and
\eqref{eq:second-energy-compact-pub} give
\begin{equation*}
 R_{j\ell}\rightarrow0
 \quad\text{in }L^{\qc}_{\mathrm{loc}}
 \text{ as }j,\ell\to\infty.
\end{equation*}
Indeed, on \(Q_L\),
\[
 \|(\lambda_{n(\ell)}-\lambda_{n(j)})|Dw_\ell|^\gamma\|_{L^{\qc}}
 \le |\lambda_{n(\ell)}-\lambda_{n(j)}|
      (C_L\eta_0)^{1/\qc}.
\]
Local uniform convergence gives
\(Z_{j\ell}\to0\) in \(L^{\qc}_{\mathrm{loc}}\).  Applying
Lemma~\ref{lem:small-drift-pub}(i) on a finite cover therefore yields
\begin{equation}\label{eq:second-strong-W-pub}
 w_j\rightarrow w
 \quad\text{ in }W^{2,1}_{\qc,\mathrm{loc}}.
\end{equation}
The continuous Sobolev embedding gives
\begin{equation}\label{eq:second-strong-gradient-pub}
 Dw_j\rightarrow Dw
 \quad\text{ in }L^{\gamma\qc}_{\mathrm{loc}}.
\end{equation}
We can now pass to the limit in \eqref{eq:second-equation-pub} and obtain
\[
 w_t-\Delta w+\lambda_\infty|Dw|^\gamma=0
 \quad\text{in }\R^d\times\R.
\]
Lemma~\ref{lem:endpoint-liouville-pub} implies  \(w\) is a constant.  On the
other hand, \eqref{eq:second-strong-W-pub},
\eqref{eq:second-strong-gradient-pub}, and
\eqref{eq:second-energy-lower-pub} imply
\(\Energy(w;Q_1)\ge\eta_0\), which is impossible.  Hence, the  energy is bounded on a fixed compact set.

\textbf{Step 2: uniformity of the bound.}
We now prove that the bound obtained in Step~1 can be chosen independently
of the   sequence. Suppose otherwise. Then, for every
$j\in\mathbb N$, there exist sequences
\[
 \bigl(v_n^j,g_n^j,\lambda_n^j,\Omega_n^j\bigr)_{n\in\mathbb N}
\]
satisfying all the hypotheses of the proposition, a radius $R_j>0$, and
infinitely many indices $n$ such that
\begin{equation}\label{eq:failure-uniform-tail-pub}
 \sup_{\zeta\in Q_{R_j}}
 \Energy\bigl(v_n^j;Q_2(\zeta)\bigr)>j.
\end{equation}
In particular,
\[
 (v_n^j)_t-\Delta v_n^j
 +\lambda_n^j|Dv_n^j|^\gamma=g_n^j
 \quad\text{in }\Omega_n^j,
 \qquad 0\leq\lambda_n^j\leq1.
\]
Moreover, the domains $\Omega_n^j$ exhaust
$\mathbb R^d\times\mathbb R$ in the sense that every compact subset is
contained in $\Omega^j_n$ for all sufficiently large $n$. For every fixed
$L>0$, the normalized local H\"older estimate holds in $Q_L$ for all
sufficiently large $n$ and the scale-uniform vanishing condition
\eqref{eq:admissible-data-pub} is satisfied.

For each $j$, choose one of these indices, denoted by $n(j)$, sufficiently
large that $Q_{R_j+j+3}\Subset\Omega_{n(j)}^j$
and $|v_{n(j)}^j(z)-v_{n(j)}^j(\bar z)|
 \le d_p(z,\bar z)^{\alpha_c}$
whenever $z,\bar z\in Q_{R_j+j+3}$ and
$d_p(z,\bar z)\le1/2$. We also require that, for every integer
$1\le L\le j$, the supremum on the left-hand side of
\eqref{eq:admissible-data-pub}, with
$(g_n,\Omega_n)=(g_{n(j)}^j,\Omega_{n(j)}^j)$, is at most $1/j$.
Finally, by \eqref{eq:failure-uniform-tail-pub}, we may choose
$\zeta_j\in Q_{R_j}$ such that $\Energy\bigl(v_{n(j)}^j;Q_2(\zeta_j)\bigr)>j.$  Define $\widetilde\Omega_j:=\Omega_{n(j)}^j-\zeta_j$
and
\[
 \widetilde v_j(z)
 :=v_{n(j)}^j(z+\zeta_j)-v_{n(j)}^j(\zeta_j),
 \qquad
 \widetilde g_j(z):=g_{n(j)}^j(z+\zeta_j),
 \qquad
 \widetilde\lambda_j:=\lambda_{n(j)}^j.
\]
Translation invariance gives
\begin{equation*}
 (\widetilde v_j)_t-\Delta\widetilde v_j
 +\widetilde\lambda_j|D\widetilde v_j|^\gamma
 =\widetilde g_j
 \quad\text{in }\widetilde\Omega_j,
 \qquad
 0\leq\widetilde\lambda_j\leq1.
\end{equation*}
Since $\zeta_j\in Q_{R_j}$ and
$Q_{R_j+j+3}\Subset\Omega_{n(j)}^j$, we have $Q_{j+2}\Subset\widetilde\Omega_j.$  Consequently,    every compact subset of ${\mathbb R^d\times\mathbb R}$ is
contained in $\tilde \Omega_j$ for all sufficiently large $j$.  Let $L>0$ be fixed. For every sufficiently large $j$, we have $L\leq j$.
The facts $|v_{n(j)}^j(z)-v_{n(j)}^j(\bar z)|
 \le d_p(z,\bar z)^{\alpha_c}$
whenever $z,\bar z\in Q_{R_j+j+3}$ with
$d_p(z,\bar z)\le1/2$ and \eqref{eq:admissible-data-pub}, together with translation
invariance, then give $|\widetilde v_j(z)-\widetilde v_j(\bar z)|
 \leq d_p(z,\bar z)^{\alpha_c}$
whenever $z,\bar z\in Q_L$ and $d_p(z,\bar z)\leq1/2$, and
\[
\sup_{\substack{\xi=(\xi_x,\xi_t),\ 0<\rho\leq1\\
(\xi_x+\rho y,\xi_t+\rho^2s)\in\widetilde\Omega_j\\
\text{for every }(y,s)\in Q_L}}
\left\|
 \rho^{\gamma'}
 \widetilde g_j(\xi_x+\rho y,\xi_t+\rho^2s)
\right\|_{L^{q_c}_{(y,s)}(Q_L)}
\leq\frac1j.
\]
Thus, the sequence
$(\widetilde v_j,\widetilde g_j,\widetilde\lambda_j,
\widetilde\Omega_j)$ satisfies explicitly all the hypotheses used in
Step~1.

Finally, translation invariance of the critical energy and property
$\Energy\bigl(v_{n(j)}^j;Q_2(\zeta_j)\bigr)>j$ imply
\[
 \Energy(\widetilde v_j;Q_2)
 =
 \Energy\bigl(v_{n(j)}^j;Q_2(\zeta_j)\bigr)>j.
\]
This contradicts Step~1, applied to the single sequence
$(\widetilde v_j)_j$ on the fixed cylinder $Q_2$. Therefore, the local
critical-energy bound can be chosen uniformly, and there exists
$M_0=M_0(d,\gamma)$ such that \eqref{eq:universal-energy-pub} holds.
\end{proof}

\begin{remark}
For the sequence arising from the first blow-up, the coefficient
of the Hamiltonian is
\(\lambda=\varepsilon^{\gamma-1}\), which can be made arbitrarily small by
choosing \(\varepsilon\) sufficiently small.  The local parabolic estimate
and the critical interpolation inequality then allow the nonlinear term to
be absorbed, yielding a local critical-energy bound for this 
sequence.  This shorter argument is sufficient for the first blow-up, but it
depends essentially on the smallness of \(\lambda\).

Proposition~\ref{prop:universal-energy-pub} establishes a stronger statement:
the same type of local energy bound remains valid for arbitrary coefficient
sequences \((\lambda_n)\subset[0,1]\), with a constant independent of the
coefficients.  In its proof, energy normalization
at the second blow-up scale provides the smallness required for
the linearized drift.

This coefficient-uniform estimate is also used in the existence
proof of Theorem~\ref{thm:critical-existence-pub}. Applying it with
$\lambda_n=1$ gives local energy bounds for the rescaled approximating
solutions. Together with the uniform little-H\"older estimate, these bounds
rule out concentration of the critical energy and yield the strong
compactness needed to pass to the limit in the regularized equations
obtained by mollifying the source term and the initial datum.
\end{remark}

With the aid of Proposition \ref{prop:universal-energy-pub}, we next show the strong convergence of the first blow-up sequence  by using Lemma \ref{lem:small-drift-pub}.

\begin{proposition}
\label{prop:first-compactness-pub}

Let \((v_n,g_n,\lambda,\Omega_n)\) be the sequence given in Proposition \ref{prop:universal-energy-pub}
with a fixed coefficient \(0<\lambda\le\lambda_*\), where constant \(\lambda_*=\lambda_*(d,\gamma)>0\).  Assume
\(v_n(0,0)=0\).  Then, after passing to a subsequence, there is a
\(v\in W^{2,1}_{\qc,\mathrm{loc}}(\R^d\times\R)\) such that
\begin{equation}\label{eq:first-strong-W-pub}
 v_n\rightarrow v
 \quad\text{ in }W^{2,1}_{\qc,\mathrm{loc}},
\end{equation}
and
\begin{equation}\label{eq:first-strong-gradient-pub}
 Dv_n\rightarrow Dv
 \quad\text{ in }L^{\gamma\qc}_{\mathrm{loc}}.
\end{equation}
\end{proposition}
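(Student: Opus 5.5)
We follow the Cauchy-sequence strategy from the second blow-up in Proposition~\ref{prop:universal-energy-pub}. The difference is that the smallness of the linearized drift now comes from the small coefficient $\lambda\le\lambda_*$ combined with the universal energy bound $M_0$, rather than from energy normalization.

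\textbf{Step 1: compactness.} By \eqref{eq:first-local-holder-pub}, a chaining argument over pairs at distance at most $1/2$ gives, for each fixed $L$, a uniform $C^{\ac,\ac/2}(Q_L)$ bound. Together with $v_n(0,0)=0$, Arzel\`a--Ascoli and a diagonal extraction then give a subsequence with $v_n\to v$ locally uniformly on $\R^d\times\R$. Proposition~\ref{prop:universal-energy-pub} gives, for every $R$ and all large $n$,
\[
\sup_{\zeta\in Q_R}\Energy(v_n;Q_2(\zeta))\le M_0 .
\]
Covering $Q_L$ by finitely many such cylinders bounds $\Energy(v_n;Q_L)$ uniformly. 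By weak compactness we may therefore also assume that $v_n\rightharpoonup v$ weakly in $W^{2,1}_{\qc,\mathrm{loc}}$, so that $v\in W^{2,1}_{\qc,\mathrm{loc}}(\R^d\times\R)$.

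\textbf{Step 2: Cauchy estimate via small drift.} Set $Z_{nm}=v_n-v_m$. Subtracting the equations gives
\[
(Z_{nm})_t-\Delta Z_{nm}+\mathbf B_{nm}\cdot DZ_{nm}=g_n-g_m,
\qquad
\mathbf B_{nm}=\lambda\gamma\int_0^1|P_\theta|^{\gamma-2}P_\theta\,\dd\theta,
\]
where $P_\theta=Dv_m+\theta(Dv_n-Dv_m)$. Since the coefficient $\lambda$ is the same for every $n$, no extra remainder term appears. Using $|\mathbf B_{nm}|\le C\lambda(|Dv_n|+|Dv_m|)^{\gamma-1}$ and $(\gamma-1)(d+2)=\gamma\qc$, we get on any $Q_2(\zeta)$ with $\zeta\in Q_R$
\[
\|\mathbf B_{nm}\|_{L^{d+2}(Q_2(\zeta))}\le C(d,\gamma)\,\lambda\,M_0^{1/(d+2)} .
\]
Here $M_0=M_0(d,\gamma)$ does not depend on $\lambda$, and this independence is exactly the point of the coefficient-uniform Proposition~\ref{prop:universal-energy-pub}. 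We choose $\lambda_*$ so that this bound is at most $\kappa_{\qc}$. Translating and applying Lemma~\ref{lem:small-drift-pub}(i) on $Q_2(\zeta)$ gives
\[
\|(Z_{nm})_t\|_{L^{\qc}(Q_1(\zeta))}+\|D^2Z_{nm}\|_{L^{\qc}(Q_1(\zeta))}
\le C\bigl(\|g_n-g_m\|_{L^{\qc}(Q_2(\zeta))}+\|Z_{nm}\|_{L^{\qc}(Q_2(\zeta))}\bigr).
\]
The right-hand side tends to $0$: the first term by \eqref{eq:admissible-data-pub} with $\rho=1$, and the second by local uniform convergence. Covering $Q_L$ by finitely many $Q_1(\zeta)$, and controlling the lower-order terms $Z_{nm}$ and $DZ_{nm}$ by uniform convergence and interpolation, shows that $(v_n)$ is Cauchy in $W^{2,1}_{\qc}(Q_L)$. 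Its limit must coincide with $v$, which gives \eqref{eq:first-strong-W-pub}.

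\textbf{Step 3: gradient convergence.} Estimate \eqref{eq:parabolic-sobolev-pub} with $p=\qc$ applies because $\qc<d+2$, and here $p^\sharp=\gamma\qc$. Applying it to $v_n-v$ on nested cylinders turns \eqref{eq:first-strong-W-pub} into \eqref{eq:first-strong-gradient-pub}.

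\textbf{Main obstacle.} The delicate point is the smallness of the drift, which must be uniform in $n$ and in the centre $\zeta$. If we only had energy bounds that grow with $L$, the drift would fail to be small on large sets. This is why the argument works with the fixed unit-scale cylinders $Q_2(\zeta)$, where the bound $M_0$ holds independently of $R$, and applies Lemma~\ref{lem:small-drift-pub} locally before covering. The other point to check is that the $\sup_{\zeta\in Q_R}$ bound in \eqref{eq:universal-energy-pub} is available for the first blow-up sequence. The hypotheses of Proposition~\ref{prop:universal-energy-pub} are verified by \eqref{eq:first-local-holder-pub} and Lemma~\ref{lem:data-disappearance-pub}, so this holds.
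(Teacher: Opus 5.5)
Your proposal is correct and follows essentially the same route as the paper. Both arguments obtain local uniform compactness from the chained H\"older bound, then run a Cauchy argument for $Z_{nm}=v_n-v_m$. In that argument the drift $\lambda A_{nm}$ is made small in $L^{d+2}(Q_2(\zeta))$ by combining $\lambda\le\lambda_*$ with the coefficient-independent bound $M_0$ from Proposition~\ref{prop:universal-energy-pub}, and Lemma~\ref{lem:small-drift-pub}(i) on a finite cover together with the embedding $W^{2,1}_{\qc}\hookrightarrow W^{1,0}_{\gamma\qc}$ finishes the proof. Your extra steps (weak compactness to place $v$ in $W^{2,1}_{\qc,\mathrm{loc}}$, and controlling the lower-order parts of $Z_{nm}$ by uniform convergence and interpolation) only make explicit details that the paper leaves implicit.
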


\begin{proof}

The local H\"older estimate (\ref{eq:first-local-holder-pub}) and the normalization
\(v_n(0,0)=0\) first give local uniform compactness. More precisely,
fix \(L>0\). For all sufficiently large \(n\), any point \(z\in Q_L\)
can be connected to \((0,0)\) by a finite chain $(0,0)=z_0,z_1,\ldots,z_{N_L}=z$
contained in \(Q_{L+1}\), where $d_p(z_i,z_{i-1})\le\frac12,$
$N_L\le C_L.$ Consequently,
\[
|v_n(z)|
\le
\sum_{i=1}^{N_L}
|v_n(z_i)-v_n(z_{i-1})|
\le
\sum_{i=1}^{N_L}
d_p(z_i,z_{i-1})^{\alpha_c}
\le C_L.
\]
The H\"older estimate again gives equicontinuity on \(Q_L\).
Therefore, by the Arzel\`a--Ascoli theorem and a diagonal argument,
there exists a continuous function \(v\) such that $v_n\rightarrow v$
{locally uniformly in }$\mathbb R^d\times\mathbb R$.

We next prove strong gradient compactness. For \(n,m\in\mathbb N\),
set $Z_{nm}:=v_n-v_m.$  Define 
\[
A_{nm}
:=
\gamma\int_0^1
|Dv_m+\theta(Dv_n-Dv_m)|^{\gamma-2}
\bigl(Dv_m+\theta(Dv_n-Dv_m)\bigr)
\,d\theta.
\]
Subtracting the equations satisfied by \(v_n\) and \(v_m\), we obtain
\begin{equation}\label{eq:first-difference-pub}
(Z_{nm})_t-\Delta Z_{nm}
+\mathbf B_{nm}\cdot DZ_{nm}
=
g_n-g_m,
\qquad
\mathbf B_{nm}:=\lambda A_{nm}.
\end{equation}
 For fixed \(L>0\). Since the domains \(\Omega_n\) exhaust
\(\mathbb R^d\times\mathbb R\), for all sufficiently large \(n,m\), $Q_2(\zeta)\Subset\Omega_n\cap\Omega_m$
\text{for every }$\zeta\in Q_L.$
Proposition~\ref{prop:universal-energy-pub} gives
\[
\sup_{\zeta\in Q_L}
\int_{Q_2(\zeta)}
\left(
|Dv_n|^{\gamma q_c}
+
|Dv_m|^{\gamma q_c}
\right)
\le2M_0.
\]
Using $\gamma q_c=(d+2)(\gamma-1),$
we obtain, for every \(\zeta\in Q_L\),
\begin{align*}
\|\mathbf B_{nm}\|_{L^{d+2}(Q_2(\zeta))}
&\le
C_\gamma\lambda
\left(
\|Dv_n\|_{L^{\gamma q_c}(Q_2(\zeta))}^{\gamma-1}
+
\|Dv_m\|_{L^{\gamma q_c}(Q_2(\zeta))}^{\gamma-1}
\right)\\
&=
C_\gamma\lambda
\left[
\left(
\int_{Q_2(\zeta)}|Dv_n|^{\gamma q_c}
\right)^{1/(d+2)}
+
\left(
\int_{Q_2(\zeta)}|Dv_m|^{\gamma q_c}
\right)^{1/(d+2)}
\right]\\
&\le
C_\gamma\lambda M_0^{1/(d+2)}.
\end{align*}
Thus, for every fixed \(L>0\),
\begin{equation*}
\sup_{\zeta\in Q_L}
\|\mathbf B_{nm}\|_{L^{d+2}(Q_2(\zeta))}
\le
C_\gamma\lambda M_0^{1/(d+2)}
\end{equation*}
for all sufficiently large \(n,m\).  Choose $\lambda_*:=
 \min\left\{1,
 \frac{\kappa_{\qc}}{2C_{\gamma}M_0^{1/(d+2)}}\right\},$
then the drift in \eqref{eq:first-difference-pub} satisfies the smallness
assumption in conclusion (i) of  Lemma~\ref{lem:small-drift-pub}.  By using
\eqref{eq:admissible-data-pub}, \(g_n-g_m\to0\) in
\(L^{\qc}_{\mathrm{loc}}\), while local uniform convergence gives
\(Z_{nm}\to0\) in \(L^{\qc}_{\mathrm{loc}}\).  Applying conclusion {(i)} in Lemma \ref{lem:small-drift-pub} on a finite covering implies \((v_n)\) is a Cauchy sequence in
\(W^{2,1}_{\qc}\) on every compact set.  This proves
\eqref{eq:first-strong-W-pub}.  The continuous Sobolev
embedding $W^{2,1}_{\qc}\hookrightarrow W^{1,0}_{\gamma\qc}$
then gives \eqref{eq:first-strong-gradient-pub}.
\end{proof}

%With Proposition \ref{prop:first-compactness-pub}, we have the following corollary concerning the equi
%\begin{corollary} 
%\label{cor:no-defect-pub}
%For every first bad-scale sequence with
%\(\varepsilon^{\gamma-1}\le\lambda_*\), and every fixed \(R>0\),
%the family
%\begin{equation}\label{eq:no-defect-pub}
% \{|Dv_n|^{\gamma\qc}:n\in\mathbb N\}
%\end{equation}
%is uniformly equi-integrable on \%(Q_R\).
%\end{corollary}

%\begin{proof}
%Proposition~\ref{prop:first-compactness-pub} shows that every subsequence
%has a further subsequence converging strongly in
%\(L^{\gamma\qc}(Q_R)\).  Hence \(\{Dv_n\}\) is relatively compact in
%that space.  To see explicitly that this implies
%\eqref{eq:no-defect-pub}, cover the closure of
%\(\{Dv_n:n\in\mathbb N\}\) by finitely many
%\(L^{\gamma\qc}(Q_R)\)-balls of radius \(\delta\), with centres
%\(G_1,\ldots,G_N\).  The finitely many functions
%\(|G_i|^{\gamma\qc}\) have absolutely continuous integrals, while, if
%\(\|Dv_n-G_i\|_{L^{\gamma\qc}(Q_R)}<\delta\),
%\[
% \int_E |Dv_n|^{\gamma\qc}
 %\le2^{\gamma\qc-1}\int_E|G_i|^{\gamma\qc}
 %+2^{\gamma\qc-1}\delta^{\gamma\qc}.
%\]
%First choose \(\delta\) and then \(|E|\), uniformly in \(n\).
%\end{proof}

We are ready to prove the uniformly little-H\"older estimate of the solution $u$ to (\ref{eq:main-equation-pub}). 

\begin{theorem}
\label{thm:uniform-little-pub}
Assume all conditions of Theorem \ref{thm:main-pub} hold.  For every \(G\Subset Q_T\), we have
\begin{equation}\label{eq:uniform-little-pub}
 \lim_{r\downarrow0}
 \sup_{u\in\U}
 \sup_{\substack{z,\bar z\in G\\0<\dpd(z,\bar z)\le r}}
 \frac{|u(z)-u(\bar z)|}{\dpd(z,\bar z)^{\ac}}=0.
\end{equation}
\end{theorem}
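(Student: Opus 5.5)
We argue by contradiction and reduce to a Liouville statement at the first blow-up scale. If \eqref{eq:uniform-little-pub} fails for some \(G\Subset Q_T\), there exist \(\delta>0\), \(s_n\downarrow0\), \(u_n\in\U\) and pairs \(z_n^0,\widehat z_n^0\in G\) satisfying \eqref{eq:initial-bad-pair-pub}. We fix nested sets \eqref{eq:nested-sets-pub} with \(G_0=G\). We also fix \(\varepsilon\in(0,\delta)\) so small that \(\lambda=\varepsilon^{\gamma-1}\le\lambda_*\), where \(\lambda_*\) is the constant of Proposition~\ref{prop:first-compactness-pub}. This choice is legitimate because \(\lambda_*\) depends only on \(d,\gamma\) through \(M_0\) and \(\kappa_{\qc}\), and the subsequence in Lemma~\ref{lem:bad-scale-selection-pub} is chosen independently of \(\varepsilon\).

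Lemma~\ref{lem:bad-scale-selection-pub} then produces pairs \(z_n,\widehat z_n\) with \(r_n\le s_n\to0\), and we form the first blow-up \((v_n,g_n)\) as in \eqref{eq:first-rescaling-pub}. Next we verify that this sequence satisfies the hypotheses of Proposition~\ref{prop:universal-energy-pub}, with \(\lambda_n\equiv\lambda\le1\):
\begin{itemize}
\item The domains \(\Omega_n\) exhaust \(\R^d\times\R\), since \(Q_{nr_n}(z_n)\Subset G_1\Subset G_2\), so \(Q_n\subset\Omega_n\).
\item The normalized Hölder bound \eqref{eq:first-local-holder-pub} holds on \(Q_n\supset Q_L\) for \(n\ge L\).
\item The scale-uniform vanishing \eqref{eq:admissible-data-pub} is exactly Lemma~\ref{lem:data-disappearance-pub}.
\end{itemize}
With \(v_n(0,0)=0\), Proposition~\ref{prop:first-compactness-pub} yields a subsequence and a limit \(v\in W^{2,1}_{\qc,\mathrm{loc}}(\R^d\times\R)\) with \(v_n\to v\) strongly in \(W^{2,1}_{\qc,\mathrm{loc}}\), \(Dv_n\to Dv\) in \(L^{\gamma\qc}_{\mathrm{loc}}\), and locally uniform convergence (as shown in its proof).

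We now pass to the limit. Strong convergence of \(Dv_n\) in \(L^{\gamma\qc}_{\mathrm{loc}}\) gives \(|Dv_n|^\gamma\to|Dv|^\gamma\) in \(L^{\qc}_{\mathrm{loc}}\). Together with \(g_n\to0\), this shows that \(v\) solves \(v_t-\Delta v+\lambda|Dv|^\gamma=0\) a.e. in \(\R^d\times\R\).

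To apply Lemma~\ref{lem:endpoint-liouville-pub} we need a global \(\ac\)-Hölder bound on \(v\), and this comes from \eqref{eq:first-global-holder-pub}. Since \(\Omega_n\) exhausts space, \([v_n]_{C^{\ac,\ac/2}(Q_L)}\le H_{G_2}/\varepsilon\) for every \(L\) and large \(n\). Locally uniform convergence passes this bound to \(v\) on all of \(\R^d\times\R\), so \(v\in C^{\ac,\ac/2}\) with finite global seminorm. Lemma~\ref{lem:endpoint-liouville-pub} with \(\mu=\lambda\) then forces \(v\) to be constant, and the normalization gives \(v\equiv v(0,0)=0\).

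It remains to contradict the normalization \eqref{eq:first-normalization-pub}. The points \(\zeta_n\) satisfy \(\dpd(\zeta_n,0)=1\), so they lie in a fixed compact set and, after extraction, \(\zeta_n\to\zeta\) with \(\dpd(\zeta,0)=1\). Locally uniform convergence of the continuous functions \(v_n\) gives \(|v(\zeta)|=\lim|v_n(\zeta_n)|\ge1\), contradicting \(v\equiv0\).

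The main obstacle is already absorbed into Proposition~\ref{prop:universal-energy-pub} and Proposition~\ref{prop:first-compactness-pub}. Without the strong gradient convergence they provide, the weak limit of \(|Dv_n|^\gamma\) could carry a defect measure, and the limit equation, and hence Liouville, would be unavailable. In the present argument the only delicate point is the order of choices: \(\varepsilon\) must be fixed before extracting the bad-scale sequence, so that \(\lambda\le\lambda_*\) uniformly in \(n\). Lemma~\ref{lem:bad-scale-selection-pub} allows this because its subsequence does not depend on \(\varepsilon\). One must also note that the constant \(H_{G_2}/\varepsilon\) is fixed once \(\varepsilon\) is fixed, so the global Hölder bound required by the Liouville lemma is indeed uniform in \(n\).
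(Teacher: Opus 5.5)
Your proposal is correct and follows the paper's proof essentially step for step. The shared steps are the bad-scale selection of Lemma~\ref{lem:bad-scale-selection-pub}, the first blow-up with fixed coefficient $\lambda=\varepsilon^{\gamma-1}\le\lambda_*$, checking the hypotheses of Proposition~\ref{prop:universal-energy-pub} via \eqref{eq:first-local-holder-pub} and Lemma~\ref{lem:data-disappearance-pub}, strong compactness from Proposition~\ref{prop:first-compactness-pub}, passing \eqref{eq:first-global-holder-pub} to the limit, and the Liouville contradiction with $v(0,0)=0$, $|v(\zeta)|\ge1$. Your choices $G_0=G$ (instead of $G\Subset G_0$) and $\lambda\le\lambda_*\le1$ (in place of the paper's extra condition $\varepsilon<1$) are harmless.
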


\begin{proof}

Suppose, by contradiction, that (\ref{eq:uniform-little-pub}) fails. Then, there exist \(\delta>0\), \(s_n\downarrow0\),
\(u_n\in\mathcal U\), and
\(z_n^0,\widehat z_n^0\in G_0\) such that $0<d_p(z_n^0,\widehat z_n^0)\le s_n$
and $|u_n(z_n^0)-u_n(\widehat z_n^0)|
\ge
\delta\,d_p(z_n^0,\widehat z_n^0)^{\alpha_c}.$  Then, we choose the nested sets
in \eqref{eq:nested-sets-pub} with the given \(G\Subset G_0\), and
apply Lemma~\ref{lem:bad-scale-selection-pub}.  Choose $0<\varepsilon<
 \min\{\delta,1,\lambda_*^{1/(\gamma-1)}\},$
The first rescaling sequence \eqref{eq:first-rescaling-pub} then has fixed
coefficient
\(\lambda=\varepsilon^{\gamma-1}\le\lambda_*\).  By using
\eqref{eq:first-local-holder-pub} and
Lemma~\ref{lem:data-disappearance-pub},  all conditions in Proposition \ref{prop:universal-energy-pub} hold. Then, we apply
Proposition~\ref{prop:first-compactness-pub}  to get, after extraction, $v_n\to v$ locally uniformly and strongly in $W^{2,1}_{\qc,\mathrm{loc}},$
and $Dv_n\to Dv$
\text{ in }$L^{\gamma\qc}_{\mathrm{loc}}.$
Using \eqref{eq:first-data-vanish-pub}, we pass to the limit in
\eqref{eq:first-rescaled-equation-pub} and obtain
\begin{equation}\label{eq:first-limit-equation-pub}
 v_t-\Delta v+\varepsilon^{\gamma-1}|Dv|^\gamma=0
 \quad\text{in }\R^d\times\R.
\end{equation}
Moreover,% For every fixed pair of points in \(\R^d\times\R\), the corresponding
%points belong to \(\Omega_n\) for large \(n\).  Hence
\eqref{eq:first-global-holder-pub} passes to the limit and gives $ [v]_{C^{\ac,\ac/2}(\R^d\times\R)}
 \le H_{G_2}/\varepsilon.$
The points \(\zeta_n\) is defined in \eqref{eq:first-normalization-pub}.  After extraction,
\(\zeta_n\to\zeta\), with \(\dpd(\zeta,0)=1\), and local uniform
convergence gives $v(0,0)=0$, $ |v(\zeta)|\ge1.$
Thus \(v\) is non-constant.  This contradicts
Lemma~\ref{lem:endpoint-liouville-pub}, applied to
\eqref{eq:first-limit-equation-pub}.  Hence, the desired conclusion follows.
\end{proof}

\section{Maximal regularity: proof of Theorem \ref{thm:main-pub}}
\label{sec:absorption-pub}
In this section, we prove our main result,
Theorem~\ref{thm:main-pub}. We begin by giving a  maximal
\(L^p\)-regularity estimate for the heat equation used later on as follows.
\begin{lemma} 
\label{lem:time-local-CZ-pub}
{Let $1<p<\infty$.} Let \(I_r\Subset I_R\Subset(0,T)\), \(0\le r<R\le1\), be intervals
such that
\(
 \operatorname{dist}(I_r,\partial I_R)\ge c_0(R-r).
\)
If \(u\in W^{2,1}_p(\T^d\times I_R)\), then
\begin{equation*}
 \|u_t\|_{L^p(\T^d\times I_r)}
 +\|D^2u\|_{L^p(\T^d\times I_r)}
 \le C\left(
 \|u_t-\Delta u\|_{L^p(\T^d\times I_R)}
 +(R-r)^{-2}\|u\|_{L^p(\T^d\times I_R)}
 \right).
\end{equation*}
{Here \(C=C(d,p,T,c_0)\) is independent of \(r\), \(R\) and \(u\).}
\end{lemma}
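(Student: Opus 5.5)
The plan is a time cutoff combined with the global maximal regularity estimate for the heat equation on $\T^d\times\R$. There are no spatial boundaries on the torus, so the only localization needed is in time.

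\emph{Cutoff.} Choose $\eta\in C_c^\infty(I_R)$ with $0\le\eta\le1$ and $\eta\equiv1$ on $I_r$. The hypothesis $\operatorname{dist}(I_r,\partial I_R)\ge c_0(R-r)$ lets us take $|\eta'|\le C c_0^{-1}(R-r)^{-1}$. Set $w:=\eta u$, extended by zero outside $\T^d\times I_R$. Then $w\in W^{2,1}_p(\T^d\times\R)$, its support is compact in time, and
\[
w_t-\Delta w=\eta\,(u_t-\Delta u)+\eta' u .
\]

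\emph{Global estimate.} Apply the maximal $L^p$-regularity of the heat operator on $\T^d\times\R$ to the zero-initial-data problem. Since $w$ vanishes before $\inf I_R$, $w$ is the Duhamel solution with source $F:=w_t-\Delta w$. The spatially periodic, time-global Calder\'on--Zygmund estimate (LSU, Chapter IV, or Fourier multiplier theory on $\T^d\times\R$) then gives
\[
\|w_t\|_{L^p}+\|D^2w\|_{L^p}\le C(d,p)\,\|F\|_{L^p}.
\]
The only subtlety is the zero Fourier mode in space, where $D^2$ annihilates the function. There $w_t=F$ holds trivially, so the bound on $w_t$ is still fine. Because the time support has length at most $T$, the constant can be allowed to depend on $T$ if one prefers to work on the finite interval $(0,T)$ with zero initial data.

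\emph{Conclusion.} Restrict to $I_r$, where $w=u$, and use $\|\eta'u\|_{L^p}\le C c_0^{-1}(R-r)^{-1}\|u\|_{L^p(\T^d\times I_R)}$. Since $R-r\le1$, we have $(R-r)^{-1}\le(R-r)^{-2}$. This gives the claimed estimate with $C=C(d,p,T,c_0)$. The degenerate case $r=0$, where $I_r$ may be small, is covered by the same argument.

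The main point to check is that the constant in the global estimate does not depend on the length of the time interval or on its location. This holds because the whole-line-in-time estimate on $\T^d\times\R$ is translation invariant in time. Alternatively, one can use the finite-horizon estimate on $(0,T)$, whose constant depends on $T$ only, which is exactly what the statement allows.
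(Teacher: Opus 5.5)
Your proposal is correct and follows the same route as the paper. The paper also uses a time cutoff $\chi\in C_c^\infty(I_R)$ with $\chi\equiv1$ on $I_r$ and $|\chi'|\lesssim c_0^{-1}(R-r)^{-1}$, applies the zero-initial-data maximal regularity estimate for the heat equation on the torus to $\chi u$, which solves $(\chi u)_t-\Delta(\chi u)=\chi(u_t-\Delta u)+\chi' u$, and finishes with $(R-r)^{-1}\le(R-r)^{-2}$; your remark on the spatial zero Fourier mode only makes explicit what the paper leaves to its citation of the linear theory.
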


\begin{proof}
Choose \(\chi\in C_c^\infty(I_R)\) such that \(\chi=1\) on \(I_r\)
and \(\|\chi'\|_\infty\le C(R-r)^{-1}\).  Extend \(\chi u\) by zero
from the lower endpoint of \(I_R\), and apply  linear parabolic
{maximal regularity for the heat equation with zero initial data on
the torus; see \cite[Chapter~IV]{LadyzhenskayaSolonnikovUraltseva1968}.}  Since
\[
 (\chi u)_t-\Delta(\chi u)
 =\chi(u_t-\Delta u)+\chi'u,
\]
and \((R-r)^{-1}\le(R-r)^{-2}\), the claimed estimate follows.
\end{proof}

We now prove Theorem \ref{thm:main-pub}.
\begin{proof}[Proof of Theorem~\ref{thm:main-pub}]
Fix \(0<\tau<T/2\), and define, for \(0\le r\le1\),
\begin{equation*}
 I_r:=\left(\tau-\frac{r\tau}{2},
             T-\tau+\frac{r\tau}{2}\right).
\end{equation*}
Then \(I_0=(\tau,T-\tau)\),
\(I_1=(\tau/2,T-\tau/2)\) and $ \operatorname{dist}(I_r,\partial I_R)
 =\frac\tau2(R-r)$ for  $0\le r<R\le1$.
Set
\begin{equation*}
 X(r):=
 \|u_t\|_{L^{\qc}(\T^d\times I_r)}
 +\|D^2u\|_{L^{\qc}(\T^d\times I_r)}.
\end{equation*}
Let \(\sigma>0\), then by using Theorem~\ref{thm:uniform-little-pub}, we find there is
an \(r_0>0\), independent of \(u\in\U\), such that
\begin{equation*}
 \operatorname*{ess\,sup}_{t\in I_1}
 \sup_{0<\operatorname{dist}(x,y)\le4r_0}
 \frac{|u(x,t)-u(y,t)|}{\operatorname{dist}(x,y)^{\ac}}
 \le\sigma.
\end{equation*}
If necessary, we decrease \(r_0\) to ensure that  \(4r_0\) is
smaller than the injectivity radius of \(\T^d\).  By applying 
Lemma~\ref{lem:integrated-GN-pub}, on each \(I_r\) gives
\begin{equation}\label{eq:final-GN-pub}
 \bigl\||Du|^\gamma\bigr\|_{L^{\qc}(\T^d\times I_r)}
 \le C\sigma^{\gamma-1}
       \|D^2u\|_{L^{\qc}(\T^d\times I_r)}
      +C_{r_0,T}\sigma^\gamma,
\end{equation}
where $C_{r_0,T}$ is a positive constant.  Moreover, for \(0\le r<R\le1\), we apply
Lemma~\ref{lem:time-local-CZ-pub} to
$u_t-\Delta u=f_u-|Du|^\gamma.$
Using \eqref{eq:uniform-Linfty-pub}, boundedness of \(\F\) and
\eqref{eq:final-GN-pub} on \(I_R\), we obtain
\begin{equation}\label{eq:final-hole-pub}
 X(r)\le C\sigma^{\gamma-1}X(R)
 +C_{\sigma,r_0,T,\tau,K,
       \sup_{f\in\F}\|f\|_{L^{\qc}(Q_T)}}(R-r)^{-2}.
\end{equation}
Choose \(\sigma\) so small that
\(C\sigma^{\gamma-1}\le1/8\).  Set \(r_j=1-2^{-j}\), then the  Iteration of
\eqref{eq:final-hole-pub} gives
\begin{equation}\label{eq:final-iteration-pub}
 X(0)\le8^{-m}X(r_m)
 +C\sum_{j=0}^{m-1}8^{-j}(r_{j+1}-r_j)^{-2}.
\end{equation}
Noting that \(r_{j+1}-r_j=2^{-j-1}\), {the series is bounded by  \(C\sum_j2^{-j}\) for some constant $C>0$.  Moreover,
\(8^{-m}X(r_m)\le8^{-m}X(1)\to0\) since each strong solution has a
finite \(W^{2,1}_{\qc}(Q_T)\) norm.  Hence, letting \(m\to\infty\) in}
\eqref{eq:final-iteration-pub} yields
\begin{equation}\label{eq:final-X-bound-pub}
 \|u_t\|_{L^{\qc}(Q_\tau)}
 +\|D^2u\|_{L^{\qc}(Q_\tau)}\le C_\tau.
\end{equation}
Finally, \eqref{eq:final-GN-pub} on \(I_0\), together with
\eqref{eq:final-X-bound-pub}, gives
\[
 \bigl\||Du|^\gamma\bigr\|_{L^{\qc}(Q_\tau)}\le C_\tau.
\]
This completes the proof of \eqref{eq:main-estimate-pub}.
\end{proof}

\section{Existence: proof of Theorem \ref{thm:critical-existence-pub}}\label{sect5}
In this section, we prove Theorem~\ref{thm:critical-existence-pub}
by combining the maximal regularity estimate of
Theorem~\ref{thm:main-pub} with Proposition~\ref{prop:universal-energy-pub}.   We first establish a uniform \(L^\infty\) estimate for families of
classical solutions to \eqref{eq:main-equation-pub}.
\begin{lemma}
\label{lem:regularized-uniform-bounds-pub}
Let $\mathcal G\subset L^{\qc}(Q_T)$ be bounded and uniformly
equi-integrable, and let $\mathcal V\subset C(\T^d)$ be bounded and
equicontinuous.  Then, every classical solution of
\begin{equation*}
 \begin{cases}
 u_t-\Delta u+|Du|^\gamma=g&\text{in }Q_T,\\
 u(\cdot,0)=v&\text{on }\T^d,
 \end{cases}
 \qquad (g,v)\in\mathcal G\times\mathcal V,
\end{equation*}
satisfies
\begin{equation}\label{eq:regularized-Linfty-pub}
 \|u\|_{L^\infty(Q_T)}\le C,
\end{equation}
where $C$ is independent of $u$, $g$ and $v$.  Moreover,
\begin{equation}\label{eq:regularized-initial-modulus-pub}
 \lim_{t\downarrow0}
 \sup_{(g,v)\in\mathcal G\times\mathcal V}
 \|u(\cdot,t)-v\|_{L^\infty(\T^d)}=0.
\end{equation}
\end{lemma}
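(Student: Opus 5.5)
We will prove the $L^\infty$ bound by comparison with solutions of linear problems, and the initial modulus by a duality argument for the upper side and the linear heat flow for the lower side.

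\textbf{Lower bound.} Since $|Du|^\gamma\ge0$, $u$ is a subsolution of the linear problem. We compare instead with the heat equation. Let $w$ solve $w_t-\Delta w=-|Du|^\gamma$ — this is circular, so we argue directly. Let $h$ solve $h_t-\Delta h=g$, $h(\cdot,0)=v$. Then $u-h$ satisfies $(u-h)_t-\Delta(u-h)=-|Du|^\gamma\le0$ with zero initial data, so $u\le h$ by the maximum principle. This is in fact an \emph{upper} bound.

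\textbf{Upper bound.} Since $\qc>(d+2)/2$, i.e.\ $\gamma>2$ gives $(d+2)(\gamma-1)/\gamma>(d+2)/2$, the heat potential maps $L^{\qc}$ into $C^{\ac,\ac/2}$. Hence $\|h\|_\infty\le\|v\|_\infty+C\|g\|_{L^{\qc}}$, which yields $\sup u\le C$ uniformly over $\mathcal G\times\mathcal V$.

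\textbf{Lower bound.} For the lower bound I would use the convexity of the Hamiltonian. Let $\phi$ solve $\phi_t-\Delta\phi=-|g|$ (or $g^-$), $\phi(\cdot,0)=\min v$; this is not yet a subsolution because of the gradient term. Instead, I would apply the adjoint method of Cirant--Goffi. For a point $(x_0,t_0)$, let $\rho$ solve the backward Fokker--Planck equation
\[
-\rho_t-\Delta\rho-\operatorname{div}\bigl(\gamma|Du|^{\gamma-2}Du\,\rho\bigr)=0,\qquad \rho(t_0)=\delta_{x_0}.
\]
The convexity inequality $|Du|^\gamma\ge$ linearization, together with the dual inequality $|p|^\gamma-\gamma|p|^{\gamma-2}p\cdot p=-(\gamma-1)|p|^\gamma$, gives
\[
u(x_0,t_0)=\int v\,\rho(0)+\int_0^{t_0}\!\!\int\bigl(g+(\gamma-1)|Du|^\gamma\bigr)\rho\ge\min v-\int\!\!\int|g|\rho.
\]
It then remains to bound $\iint|g|\rho\le\|g\|_{L^{\qc}}\|\rho\|_{L^{\qc'}}$. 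We would use the endpoint adjoint estimate (kinetic energy $\iint|Du|^\gamma\rho$ controls $\|\rho\|_{L^{\qc'}}$ through the drift bound). Writing $E:=\iint|Du|^\gamma\rho$, the identity above gives $(\gamma-1)E\le u(x_0,t_0)-\min v+\iint|g|\rho\le C+\|g\|\,\|\rho\|_{L^{\qc'}}$. The Cirant--Goffi estimate $\|\rho\|_{L^{\qc'}}\le C(1+E)^{1/\gamma'\cdots}$, with a sublinear power exactly at the endpoint $\qc$ after smallness, closes the bound.

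\textbf{Main obstacle.} This endpoint exponent is the main obstacle: at $q=\qc$ the power is borderline rather than sublinear. To handle it, I would use equi-integrability, splitting $g=g\mathbf 1_{|g|>M}+g\mathbf 1_{|g|\le M}$. The bounded part contributes $M$, and the tail has small $L^{\qc}$ norm uniformly in $\mathcal G$ (by $\omega_{\mathcal G}$ and Chebyshev), which allows absorption.

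\textbf{Initial modulus.} For \eqref{eq:regularized-initial-modulus-pub}, the same representation on $[0,t]$ gives
\[
u(x_0,t)-v(x_0)\ge\int(v-v(x_0))\rho(0)-\iint_{(0,t)}|g|\rho.
\]
The first term is controlled via the equicontinuity modulus of $\mathcal V$ plus a distance estimate $\int|x-x_0|\rho(0)\lesssim$ (a power of $t$ times $(1+E)$). The second term is small through the tail $\omega_{\mathcal G}(|\T^d|t)$. From above, $u-v\le h-v$, and $h(t)\to v$ uniformly by equicontinuity of $\mathcal V$ and the $L^{\qc}$ heat potential estimate on short intervals.
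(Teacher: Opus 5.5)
Your proposal is correct and follows essentially the paper's route. You use heat comparison for the uniform upper bound and for the upper side of the initial modulus. For the lower side you use the Cirant--Goffi duality identity together with the endpoint adjoint bound $\|\rho\|_{L^{\qc'}}\le C(1+A)$. That bound is linear in the action, not sublinear, which is exactly why the equi-integrable tail splitting with absorption is the working mechanism, as you eventually note. A distance estimate for the adjoint measure then gives the lower initial modulus.

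The differences are only bookkeeping. You first bound the kinetic energy using the already-proved upper bound, and then control the source term on the thin slab $\T^d\times(0,t)$ via $\omega_{\mathcal G}(|\T^d|t)$. The paper instead keeps $c_\gamma A$ on the favorable side and re-splits the source at a level $h(\delta)$.
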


\begin{proof}
Write $q=\qc$ and $q'=q/(q-1)$.  Since $\gamma>2$,
\begin{equation}\label{eq:qc-above-heat-threshold-pub}
 q=\frac{(d+2)(\gamma-1)}{\gamma}>\frac{d+2}{2}.
\end{equation}
The inequality $u_t-\Delta u\le g$ and comparison with the heat equation
yield
\begin{equation}\label{eq:regularized-upper-comparison-pub}
 u(\cdot,t)
 \le e^{t\Delta}v+
 \int_0^t e^{(t-s)\Delta}g^+(\cdot,s)\,ds.
\end{equation}
By using the periodic heat-kernel estimate and
\eqref{eq:qc-above-heat-threshold-pub}, one has
\begin{equation}\label{eq:heat-potential-small-pub}
 \left\|\int_0^t e^{(t-s)\Delta}g^+(\cdot,s)\,ds
 \right\|_{L^\infty(\T^d)}
 \le C t^{1-\frac{d+2}{2q}}\|g\|_{L^q(Q_t)},
\end{equation}
where $q=q_c.$  It now  gives a uniform upper bound.

We prove the corresponding lower estimates by duality.  Fix
$t\in(0,T)$ and a smooth probability density $\rho_t$ on $\T^d$.  Put $b:=D_p(|Du|^\gamma)=\gamma|Du|^{\gamma-2}Du$
and let $\rho$ solve
\begin{equation}\label{eq:regularized-adjoint-pub}
 \begin{cases}
 -\rho_s-\Delta\rho{-}\operatorname{div}(b\rho)=0
     &\text{in }\T^d\times(0,t),\\
 \rho(\cdot,t)=\rho_t&\text{on }\T^d.
 \end{cases}
\end{equation}
Then $\rho\ge0$ and $\int_{\T^d}\rho(\cdot,s)=1$.  Set
\begin{align}\label{action-A}
 A:=\int_0^t\!\int_{\T^d}|b|^{\gamma'}\rho\,dx\,ds.
\end{align}
At $q=(d+2)/\gamma'$, the endpoint adjoint estimate
\cite[Corollary~2.3]{CirantGoffiParabolic2021} reads
\begin{equation}\label{eq:endpoint-adjoint-bound-pub}
 \|\rho\|_{L^{q'}(Q_t)}\le C(1+A),
\end{equation}
with $C=C(d,\gamma,T)$ independent of $\rho_t$.  The Legendre
transform of $H(p)=|p|^\gamma$ is
\[
 L(a)=c_\gamma|a|^{\gamma'},
 \qquad c_\gamma=(\gamma-1)\gamma^{-\gamma'},
\]
{Using $u_s=\Delta u-|Du|^\gamma+g$ and
$\rho_s=-\Delta\rho-\operatorname{div}(b\rho)$, we integrate by parts
on the torus to obtain
\[
\frac{d}{ds}\int_{\T^d}u\rho\,dx
=\int_{\T^d}\bigl(g-|Du|^\gamma+b\cdot Du\bigr)\rho\,dx
=\int_{\T^d}\bigl(g+(\gamma-1)|Du|^\gamma\bigr)\rho\,dx.
\]
Since $(\gamma-1)|Du|^\gamma=c_\gamma|b|^{\gamma'}$, integration in time yields}

\begin{equation}\label{eq:regularized-duality-identity-pub}
 \int_{\T^d}u(x,t)\rho_t(x)\,dx
 =\int_{\T^d}v(x)\rho(x,0)\,dx
  +\int_0^t\!\int_{\T^d}g\rho\,dx\,ds+c_\gamma A.
\end{equation}

Uniform equi-integrability implies the uniform tail condition
\begin{equation}\label{eq:regularized-tail-pub}
 \lim_{h\to\infty}\sup_{g\in\mathcal G}
 \|g\mathbf1_{\{|g|>h\}}\|_{L^q(Q_T)}=0.
\end{equation}
Choose $h$ so large that the last supremum is at most
$c_\gamma/(2C)$, where $C$ is the constant in
\eqref{eq:endpoint-adjoint-bound-pub}.  H\"older's inequality gives
\begin{align}
 \int_0^t\!\int_{\T^d}g\rho\,dx\,ds
 &\ge-h t-
 \|g\mathbf1_{\{|g|>h\}}\|_{L^q(Q_t)}
 \|\rho\|_{L^{q'}(Q_t)}\notag\\
 &\ge-hT-\frac{c_\gamma}{2}A-C.
 \label{eq:regularized-source-lower-pub}
\end{align}
Equations \eqref{eq:regularized-duality-identity-pub} and
\eqref{eq:regularized-source-lower-pub}, followed by approximation of a
Dirac mass with $\rho_t$, give a uniform lower bound for $u$.  Together
with \eqref{eq:regularized-upper-comparison-pub}, this proves
\eqref{eq:regularized-Linfty-pub}.

It remains to prove the lower initial modulus.  Given $\eta>0$, uniform
equicontinuity of $\mathcal V$ provides smooth functions $\phi_v$ such
that
\begin{equation*}
 \sup_{v\in\mathcal V}\|v-\phi_v\|_{L^\infty(\mathbb T^d)}\le\eta,
 \qquad
 \sup_{v\in\mathcal V}\|D\phi_v\|_{L^\infty(\mathbb T^d)}\le C_\eta.
\end{equation*}
{Fix $x\in\T^d$. We claim that the following estimate holds
for the adjoint solution with terminal datum $\delta_x$, with
$A$ denoting the corresponding action given in \eqref{action-A}.}
\begin{equation}\label{eq:adjoint-displacement-pub}
\int_{\T^d}\operatorname{dist}(x,y)\rho(y,0)\,dy
\le C\sqrt t+t^{1/\gamma}A^{1/\gamma'}.
\end{equation}

{To prove this, we first tackle an arbitrary smooth terminal
probability density $\rho_t$, and set $\rho$ and $A$ for its
adjoint solution and action.}
Set $d_x(y)=\operatorname{dist}(x,y)$, where the distance is taken
on $\T^d$.
Let $d_{x,\varepsilon}$ be a periodic mollification of $d_x$.
Since $d_x$ is $1$-Lipschitz, we have
\begin{equation}\label{above-bound}
\begin{aligned}
\|d_{x,\varepsilon}-d_x\|_{L^\infty(\T^d)}
&\le C\varepsilon,
\qquad
\|Dd_{x,\varepsilon}\|_{L^\infty(\T^d)}\le1,\\
\|\Delta d_{x,\varepsilon}\|_{L^\infty(\T^d)}
&\le C\varepsilon^{-1}.
\end{aligned}
\end{equation}
Testing \eqref{eq:regularized-adjoint-pub} with
$d_{x,\varepsilon}$ and integrating by parts gives
\begin{align*}
\int_{\T^d}d_{x,\varepsilon}(y)\rho(y,0)\,dy
&=\int_{\T^d}d_{x,\varepsilon}(y)\rho_t(y)\,dy\\
&\quad+\int_0^t\!\int_{\T^d}
\bigl(\Delta d_{x,\varepsilon}
-b\cdot Dd_{x,\varepsilon}\bigr)\rho\,dy\,ds.
\end{align*}
Using \eqref{above-bound} and
$\int_{\T^d}\rho(y,s)\,dy=1$, we obtain
\begin{align}\label{adjoint-dx-rho-dy}
\int_{\T^d}d_x(y)\rho(y,0)\,dy
&\le \int_{\T^d}d_x(y)\rho_t(y)\,dy
+C\left(\varepsilon+\frac{t}{\varepsilon}\right)
+\int_0^t\!\int_{\T^d}|b|\rho\,dy\,ds
\nonumber\\
&\le \int_{\T^d}d_x(y)\rho_t(y)\,dy
+C\left(\varepsilon+\frac{t}{\varepsilon}\right)
+t^{1/\gamma}A^{1/\gamma'},
\end{align}
where the last step follows from H\"older's inequality.
Choose $\varepsilon=\sqrt t$.

Now take smooth terminal probability densities
$\rho_t^{(j)}\rightharpoonup\delta_x$, and denote the corresponding
adjoint solutions and actions by $\rho^{(j)}$ and $A_j$.
For the fixed classical solution, the drift $b$ is bounded and
continuous. Standard stability of the linear adjoint equation
therefore gives
$\rho^{(j)}(\cdot,0)\rightharpoonup\rho(\cdot,0)$ and $A_j\to A$,
where $\rho$ has terminal datum $\delta_x$.
Since $\int_{\T^d}d_x(y)\rho_t^{(j)}(y)\,dy\rightarrow0,$
passing to the limit in \eqref{adjoint-dx-rho-dy} proves
\eqref{eq:adjoint-displacement-pub}.
The bound \eqref{eq:endpoint-adjoint-bound-pub} also yields weak
compactness of $\rho^{(j)}$ in $L^{q'}(Q_t)$, so the adjoint
estimate and the duality identity
\eqref{eq:regularized-duality-identity-pub} remain valid in this
limit, with terminal term $u(x,t)$.

Consequently,
\begin{equation}\label{eq:initial-term-lower-pub}
\int_{\T^d}v(y)\rho(y,0)\,dy
\ge v(x)-2\eta-C_\eta
\bigl(C\sqrt t+t^{1/\gamma}A^{1/\gamma'}\bigr).
\end{equation}
Given $\delta>0$, by \eqref{eq:regularized-tail-pub},
we can choose $h=h(\delta)$ sufficiently large that
\[
\sup_{g\in\mathcal G}
\|g\mathbf1_{\{|g|>h\}}\|_{L^{q_c}(Q_T)}
\le\delta.
\]
By using Young's inequality, one has 
\[
C_\eta t^{1/\gamma}A^{1/\gamma'}
\le \frac{c_\gamma}{2}A+C_{\eta,\gamma}t.
\]
Combining this with \eqref{eq:endpoint-adjoint-bound-pub},
\eqref{eq:regularized-duality-identity-pub} and
\eqref{eq:initial-term-lower-pub}, we obtain
\begin{equation}\label{eq:lower-initial-modulus-pub}
u(x,t)-v(x)
\ge-2\eta-C_\eta\sqrt t-C_{\eta,\gamma}t
-h(\delta)t-C\delta
+\bigl(c_\gamma/2-C\delta\bigr)A.
\end{equation}
Choose $\delta$ small enough that the coefficient of $A$ is
nonnegative, and omit this term from the lower bound.
For fixed $\delta$ and $\eta$, all terms on the right-hand side
of \eqref{eq:lower-initial-modulus-pub} containing $t$ vanish
as $t\downarrow0$.
Then letting $\delta\downarrow0$ and $\eta\downarrow0$ proves
the lower estimate needed for
\eqref{eq:regularized-initial-modulus-pub}.
The upper bound follows from
\eqref{eq:regularized-upper-comparison-pub}--
\eqref{eq:heat-potential-small-pub}, together with
\[
\sup_{v\in\mathcal V}
\|e^{t\Delta}v-v\|_{L^\infty({\T^d})}
\rightarrow0
\qquad\text{as }t\downarrow0,
\]
which follows from the uniform equicontinuity of $\mathcal V$.
Combining the two bounds proves
\eqref{eq:regularized-initial-modulus-pub}.
\end{proof}

Now, we are ready to prove Theorem \ref{thm:critical-existence-pub}:
\begin{proof}[Proof of Theorem~\ref{thm:critical-existence-pub}]
 {We extend $f$ by zero for $t\notin(0,T)$. Standard space-time
mollification, performed periodically in space, yields
$f_n\in C^\infty(\T^d\times[0,T])$.}
Let $u_{0,n}$ be smooth 
periodic mollifications of $u_0$ such that
\begin{equation}\label{eq:critical-data-approximation-pub}
 f_n\to f\quad\text{in }L^{\qc}(Q_T),
 \qquad
 u_{0,n}\to u_0\quad\text{uniformly on }\T^d.
\end{equation}
For every $n$, we shall apply the continuation argument to show that the regularized problem 
\begin{equation}\label{eq:smooth-approximate-equation-pub}
 \begin{cases}
 (u_n)_t-\Delta u_n+|Du_n|^\gamma=f_n&\text{in }Q_T,\\
 u_n(\cdot,0)=u_{0,n}&\text{on }\T^d
 \end{cases}
\end{equation}
has a unique classical solution on $[0,T]$.  Thanks to the standard local
semilinear parabolic theory, one has there exists  a classical solution for the smooth
periodic data in a short time interval. On its maximal interval of existence, set
$w_n=|Du_n|^2/2$. Differentiating the equation in \eqref{eq:smooth-approximate-equation-pub}, one finds
\[
(w_n)_t-\Delta w_n+
\gamma|Du_n|^{\gamma-2}Du_n\cdot Dw_n
=Df_n\cdot Du_n-|D^2u_n|^2.
\]
Applying the maximum principle to $(2w_n+\varepsilon)^{1/2}$ and then
letting $\varepsilon\downarrow0$, we obtain that 
\[
\|Du_n(\cdot,t)\|_{L^\infty(\mathbb T^d)}
\le\|Du_{0,n}\|_{L^\infty(\mathbb T^d)}+
\int_0^t\|Df_n(\cdot,s)\|_{L^\infty(\mathbb T^d)}\,ds.
\]
Also, we use comparison principle to get 
$\|u_n(\cdot,t)\|_{L^\infty(\mathbb T^d)}\le\|u_{0,n}\|_{L^\infty(\mathbb T^d)}+
\int_0^t\|f_n(\cdot,s)\|_{L^\infty(\mathbb T^d)}\,ds$.
Since the bounds on $u_n$ and $Du_n$ remain finite up to time $T$ and
$H(p)=|p|^\gamma$ is locally Lipschitz, the classical solution
extends to $[0,T]$ by the standard continuation criterion In addition, by using the maximum
principle, one obtains the uniqueness of classical solutions. Invoking the standard linear parabolic regularity with
smooth initial data, one arrives at  $u_n\in W^{2,1}_{\qc}(Q_T)$, as required
by Theorem~\ref{thm:main-pub}.

Set  $\mathcal F_0:=\{f\}\cup\{f_n:n\in\mathbb N\}.$
The strong convergence in \eqref{eq:critical-data-approximation-pub}
implies that $\mathcal F_0$ is bounded and uniformly equi-integrable in
$L^{\qc}(Q_T)$.  Indeed,
\[
 \bigl\||f_n|^{\qc}-|f|^{\qc}\bigr\|_{L^1(Q_T)}
 \le C_{\qc}
 \bigl(\|f_n\|_{L^{\qc}(Q_T)}^{\qc-1}
       +\|f\|_{L^{\qc}(Q_T)}^{\qc-1}\bigr)
 \|f_n-f\|_{L^{\qc}(Q_T)}
 \rightarrow0.
\]
Hence, for all sufficiently large $n$, the integral of $|f_n|^{\qc}$ over
a set of small measure is uniformly small since $f_n\to f$ in
$L^{\qc}(Q_T)$.   Therefore, $\mathcal F_0$
is uniformly equi-integrable.

The family $\{u_{0,n}:n\in\mathbb N\}$ is uniformly bounded and
equicontinuous.  Lemma~\ref{lem:regularized-uniform-bounds-pub} therefore
gives
\begin{equation}\label{eq:approximate-uniform-bound-pub}
 \sup_n\|u_n\|_{L^\infty(Q_T)}<\infty
\end{equation}
and
\begin{equation}\label{eq:uniform-initial-modulus-pub}
 \lim_{t\downarrow0}
 \sup_n\|u_n(\cdot,t)-u_{0,n}\|_{L^\infty(\T^d)}=0.
\end{equation}

Invoking Theorem~\ref{thm:main-pub}, we have for every $0<\tau<T/2$,
\begin{equation*}
 \sup_n\left(
 \|(u_n)_t\|_{L^{\qc}(Q_\tau)}
 +\|D^2u_n\|_{L^{\qc}(Q_\tau)}
 +\||Du_n|^\gamma\|_{L^{\qc}(Q_\tau)}
 \right)<\infty.
\end{equation*}
{For every compact set $K\Subset Q_T$, Theorem~\ref{thm:critical-holder-pub},}
together with \eqref{eq:approximate-uniform-bound-pub}, yields
$\sup_n\|u_n\|_{C^{\alpha_c,\alpha_c/2}(K)}<\infty$.
Thus, $(u_n)$ is uniformly bounded and equicontinuous on $K$.  By the
Arzel\`a--Ascoli theorem and a diagonal argument, there exists a
continuous function $u$ such that
\begin{equation}\label{eq:approximate-local-uniform-pub}
 u_n\rightarrow u
 \quad\text{locally uniformly in }Q_T.
\end{equation}

We next rule out concentration of the critical energy.  We claim that, for
every $0<\tau<T/2$,
\begin{equation}\label{eq:original-energy-nonconcentration-pub}
 \lim_{r\downarrow0}
 \sup_n\sup_{z_0\in Q_\tau}
 \Energy\bigl(u_n;Q_r(z_0)\bigr)=0,
\end{equation}
where $ \Energy$ is given in \eqref{eq:critical-energy-pub} and $r$ is chosen such that
$Q_{4r}(z_0)\subset Q_{\tau/2}$.  Suppose, to the contrary, that there are
$\eta>0$, $r_k\downarrow0$, indices $n_k$, and points
$z_k=(x_k,t_k)\in Q_\tau$ such that
\begin{equation}\label{eq:original-energy-concentration-pub}
 \Energy\bigl(u_{n_k};Q_{r_k}(z_k)\bigr)\ge\eta.
\end{equation}
After extraction, we may assume that $n_k\to\infty$; otherwise the
absolute continuity of the energy of one fixed strong solution contradicts
\eqref{eq:original-energy-concentration-pub}.

Define
\begin{align}
 v_k(y,s)
 &:=\frac{u_{n_k}(x_k+r_ky,t_k+r_k^2s)-u_{n_k}(x_k,t_k)}
          {r_k^{\alpha_c}},
 \label{eq:existence-energy-rescaling-pub}\\
 g_k(y,s)
 &:=r_k^{\gamma'}
 f_{n_k}(x_k+r_ky,t_k+r_k^2s).
 \label{eq:existence-data-rescaling-pub}
\end{align}
On the rescaled domains, which exhaust
$\mathbb R^d\times\mathbb R$, these functions satisfy
\begin{equation*}
 (v_k)_s-\Delta v_k+|Dv_k|^\gamma=g_k.
\end{equation*}
Theorem~\ref{thm:uniform-little-pub} implies that, for every fixed $L>0$,
\begin{equation}\label{eq:existence-rescaled-holder-zero-pub}
 [v_k]_{C^{\alpha_c,\alpha_c/2}(Q_L)}\rightarrow0.
\end{equation}
Indeed, the points corresponding to $Q_L$ in the original variables have
parabolic distance at most $C_Lr_k$.  Uniform equi-integrability of
$\mathcal F_0$ and the identity $\gamma'\qc=d+2$ also give the
scale-uniform disappearance condition \eqref{eq:admissible-data-pub} for
$(g_k)$.  Hence Proposition~\ref{prop:universal-energy-pub}, applied with
$\lambda_k=1$, yields
\begin{equation}\label{eq:existence-rescaled-energy-bound-pub}
 \sup_k\Energy(v_k;Q_L)<\infty
 \qquad\text{for every fixed }L>0.
\end{equation}

Applying Lemma~\ref{lem:GN-pub} 
and using a finite covering of $Q_2$, we obtain from \eqref{eq:existence-rescaled-holder-zero-pub} and
\eqref{eq:existence-rescaled-energy-bound-pub} that 
\begin{equation}\label{eq:existence-gradient-zero-pub}
 \||Dv_k|^\gamma\|_{L^{\qc}(Q_2)}\rightarrow0.
\end{equation}
Moreover, $g_k\to0$ in $L^{\qc}(Q_2)$ and $v_k\to0$ uniformly on $Q_2$.
The standard local parabolic estimate applied to $(v_k)_s-\Delta v_k=g_k-|Dv_k|^\gamma$
therefore gives
\begin{equation}\label{eq:existence-linear-energy-zero-pub}
 \|(v_k)_s\|_{L^{\qc}(Q_1)}
 +\|D^2v_k\|_{L^{\qc}(Q_1)}\rightarrow0.
\end{equation}
  \eqref{eq:existence-gradient-zero-pub} and
\eqref{eq:existence-linear-energy-zero-pub} imply
$\Energy(v_k;Q_1)\to0$.  By \eqref{eq:existence-energy-rescaling-pub}---\eqref{eq:existence-data-rescaling-pub}, one finds $\Energy(v_k;Q_1)
 =\Energy\bigl(u_{n_k};Q_{r_k}(z_k)\bigr),$
contradicting \eqref{eq:original-energy-concentration-pub}.  This proves
\eqref{eq:original-energy-nonconcentration-pub}.

We now prove strong compactness.  For $n,m\in\mathbb N$, let
$Z_{nm}:=u_n-u_m$ and define
\[
 \mathbf B_{nm}
 :=\gamma\int_0^1
 |Du_m+\theta(Du_n-Du_m)|^{\gamma-2}
 \bigl(Du_m+\theta(Du_n-Du_m)\bigr)\,d\theta.
\]
Subtracting the equations for $u_n$ and $u_m$ gives
\begin{equation}\label{eq:existence-difference-equation-pub}
 (Z_{nm})_t-\Delta Z_{nm}
 +\mathbf B_{nm}\cdot DZ_{nm}=f_n-f_m.
\end{equation}
Since $(\gamma-1)(d+2)=\gamma\qc,$
we have, on every parabolic cylinder $Q_{2r}(z_0)$,
\begin{align}
 \|\mathbf B_{nm}\|_{L^{d+2}(Q_{2r}(z_0))}^{d+2}
 &\le C_{d,\gamma}
 \int_{Q_{2r}(z_0)}
 \bigl(|Du_n|^{\gamma\qc}+|Du_m|^{\gamma\qc}\bigr).
 \label{eq:existence-drift-energy-pub}
\end{align}
By \eqref{eq:original-energy-nonconcentration-pub}, we may choose $r>0$
so small that the right-hand side of
\eqref{eq:existence-drift-energy-pub} is below
$\kappa_{\qc}^{d+2}$, uniformly in $n,m$ and in
$z_0\in Q_\tau$.  The rescaled form of
Lemma~\ref{lem:small-drift-pub}(i), applied to
\eqref{eq:existence-difference-equation-pub}, gives
\begin{align}
 &\|(Z_{nm})_t\|_{L^{\qc}(Q_r(z_0))}
 +\|D^2Z_{nm}\|_{L^{\qc}(Q_r(z_0))}
 \notag\\
 &\qquad\le C_r\left(
 \|f_n-f_m\|_{L^{\qc}(Q_{2r}(z_0))}
 +\|Z_{nm}\|_{L^{\qc}(Q_{2r}(z_0))}
 \right).
 \label{eq:existence-local-cauchy-pub}
\end{align}
The first term in \eqref{eq:existence-local-cauchy-pub} on the right tends to zero by
\eqref{eq:critical-data-approximation-pub}, and the second tends to zero by
\eqref{eq:approximate-local-uniform-pub}.  A finite covering of $Q_\tau$
by such cylinders shows that $(u_n)$ is Cauchy in
$W^{2,1}_{\qc}(Q_\tau)$.  Consequently,
 $u_n\rightarrow u
 \quad\text{strongly in }W^{2,1}_{\qc}(Q_\tau).$
The continuous  embedding then yields
 $Du_n\rightarrow Du$ \text{strongly in }$L^{\gamma\qc}(Q_\tau)$.

Finally, the pointwise inequality $\bigl||p|^\gamma-|q|^\gamma\bigr|
 \le C_\gamma\bigl(|p|^{\gamma-1}+|q|^{\gamma-1}\bigr)|p-q|$
and H\"older's inequality yield
\begin{align*}
 \||Du_n|^\gamma-|Du|^\gamma\|_{L^{\qc}(Q_\tau)}
 &\le C_\gamma
 \bigl(
 \|Du_n\|_{L^{\gamma\qc}(Q_\tau)}^{\gamma-1}
 +\|Du\|_{L^{\gamma\qc}(Q_\tau)}^{\gamma-1}
 \bigr)
 \|Du_n-Du\|_{L^{\gamma\qc}(Q_\tau)}
 \rightarrow0.
\end{align*}
We may therefore pass to the limit in
\eqref{eq:smooth-approximate-equation-pub}, proving
\eqref{eq:critical-limit-equation-pub}.
{The bound \eqref{eq:approximate-uniform-bound-pub} passes to the limit,
so $u\in L^\infty(Q_T)$.}

It remains to identify the initial trace.  For every fixed $t>0$, local
uniform convergence gives $u_n(\cdot,t)\to u(\cdot,t)$ uniformly on
$\T^d$.  Hence
\[
 \|u(\cdot,t)-u_0\|_{L^\infty(\T^d)}
 \le\limsup_{n\to\infty}
 \left(
 \|u_n(\cdot,t)-u_{0,n}\|_{L^\infty(\T^d)}
 +\|u_{0,n}-u_0\|_{L^\infty(\T^d)}
 \right).
\]
Letting $t\downarrow0$ and using
\eqref{eq:critical-data-approximation-pub}--
\eqref{eq:uniform-initial-modulus-pub} proves
the initial condition in \eqref{eq:critical-limit-equation-pub} and
completes the proof.
\end{proof}

\bibliographystyle{plain}
\bibliography{ref}

\end{document}